\documentclass[a4paper]{article}
\usepackage{amsmath,amsthm,amsfonts,thmtools}
\usepackage{amssymb}
\usepackage[cmintegrals]{newtxmath}
\usepackage{graphicx}
\usepackage{listings}
\usepackage[font=footnotesize]{caption}
\usepackage[font=scriptsize]{subcaption}
\usepackage{euscript}
\usepackage{units}
\usepackage{enumerate}
\usepackage{enumitem}
\usepackage{xcolor}
\usepackage{todonotes}
\usepackage{booktabs}
\usepackage{tikz}
\usepackage{comment}
 
\DeclareMathAlphabet{\pazocal}{OMS}{zplm}{m}{n}

\usepackage[bookmarks=true,hidelinks]{hyperref}
\usepackage{bbm}
\usepackage{multirow}

\usepackage{mathtools}
\usepackage{subcaption} 
\usepackage{graphicx}
\usepackage{pgfplots}
\usepackage{float}
\usepackage{algpseudocode}
\usepackage{bbold}
\usepackage{cite}
\usepackage{microtype}
\usepackage{rotating}
\usepackage{enumitem}
\usepackage{bm}
\usepackage{hyperref}

\textwidth=16cm \oddsidemargin=0cm \evensidemargin=0cm \textheight=23cm \voffset=-1cm

\numberwithin{equation}{section}
\newtheorem{theorem}{Theorem}[section]

\newtheorem{algorithm}[theorem]{Algorithm}

\numberwithin{equation}{section}

\theoremstyle{definition}

\newtheoremstyle{myremarkstyle}{}{}{}{}{\bfseries}{.}{ }{}
\theoremstyle{myremarkstyle}
\declaretheorem[name=Remark,qed=$\blacksquare$,numberlike=theorem]{remark}

%\declaretheorem[name=Definition,qed=$\blacksquare$,numberlike=theorem]{definition}

%% Definition of \intavg
%%\newcommand{\intavg}{\fint}
%\def\Xint#1{\mathchoice
%{\XXint\displaystyle\textstyle{#1}} 
%{\XXint\textstyle\scriptstyle{#1}} 
%{\XXint\scriptstyle\scriptscriptstyle{#1}} 
%{\XXint\scriptscriptstyle\scriptscriptstyle{#1}} 
%\!\int}
%\def\XXint#1#2#3{{\setbox0=\hbox{$#1{#2#3}{\int}$ }
%\vcenter{\hbox{$#2#3$ }}\kern-.57\wd0}}
%\def\intavg{\Xint-}

\makeatletter
\newcommand*{\intavg}{%
  % #1: overlay symbol
  \mint@l{-}{}%
}
\newcommand*{\mint@l}[2]{%
  % #1: overlay symbol
  % #2: limits
  \@ifnextchar\limits{%
    \mint@l{#1}%
  }{%
    \@ifnextchar\nolimits{%
      \mint@l{#1}%
    }{%
      \@ifnextchar\displaylimits{%
        \mint@l{#1}%
      }{%
        \mint@s{#2}{#1}%
      }%
    }%
  }%
}
\newcommand*{\mint@s}[2]{%
  % #1: limits
  % #2: overlay symbol
  \@ifnextchar_{%
    \mint@sub{#1}{#2}%
  }{%
    \@ifnextchar^{%
      \mint@sup{#1}{#2}%
    }{%
      \mint@{#1}{#2}{}{}%
    }%
  }%
}
\def\mint@sub#1#2_#3{%
  \@ifnextchar^{%
    \mint@sub@sup{#1}{#2}{#3}%
  }{%
    \mint@{#1}{#2}{#3}{}%
  }%
}
\def\mint@sup#1#2^#3{%
  \@ifnextchar_{%
    \mint@sub@sup{#1}{#2}{#3}%
  }{%
    \mint@{#1}{#2}{}{#3}%
  }%
}
\def\mint@sub@sup#1#2#3^#4{%
  \mint@{#1}{#2}{#3}{#4}%
}
\def\mint@sup@sub#1#2#3_#4{%
  \mint@{#1}{#2}{#4}{#3}%
}
\newcommand*{\mint@}[4]{%
  % #1: \limits, \nolimits, \displaylimits
  % #2: overlay symbol: -, =, ...
  % #3: subscript
  % #4: superscript
  \mathop{}%
  \mkern-\thinmuskip
  \mathchoice{%
    \mint@@{#1}{#2}{#3}{#4}%
        \displaystyle\textstyle\scriptstyle
  }{%
    \mint@@{#1}{#2}{#3}{#4}%
        \textstyle\scriptstyle\scriptstyle
  }{%
    \mint@@{#1}{#2}{#3}{#4}%
        \scriptstyle\scriptscriptstyle\scriptscriptstyle
  }{%
    \mint@@{#1}{#2}{#3}{#4}%
        \scriptscriptstyle\scriptscriptstyle\scriptscriptstyle
  }%
  \mkern-\thinmuskip
  \int#1%
  \ifx\\#3\\\else_{#3}\fi
  \ifx\\#4\\\else^{#4}\fi  
}
\newcommand*{\mint@@}[7]{%
  % #1: limits
  % #2: overlay symbol
  % #3: subscript
  % #4: superscript
  % #5: math style
  % #6: math style for overlay symbol
  % #7: math style for subscript/superscript
  \begingroup
    \sbox0{$#5\int\m@th$}%
    \sbox2{$#5\int_{}\m@th$}%
    \dimen2=\wd0 %
    % => \dimen2 = width of \int
    \let\mint@limits=#1\relax
    \ifx\mint@limits\relax
      \sbox4{$#5\int_{\kern1sp}^{\kern1sp}\m@th$}%
      \ifdim\wd4>\wd2 %
        \let\mint@limits=\nolimits
      \else
        \let\mint@limits=\limits
      \fi
    \fi
    \ifx\mint@limits\displaylimits
      \ifx#5\displaystyle
        \let\mint@limits=\limits
      \fi
    \fi
    \ifx\mint@limits\limits
      \sbox0{$#7#3\m@th$}%
      \sbox2{$#7#4\m@th$}%
      \ifdim\wd0>\dimen2 %
        \dimen2=\wd0 %
      \fi
      \ifdim\wd2>\dimen2 %
        \dimen2=\wd2 %
      \fi
    \fi
    \rlap{%
      $#5%
        \vcenter{%
          \hbox to\dimen2{%
            \hss
            $#6{#2}\m@th$%
            \hss
          }%
        }%
      $%
    }%
  \endgroup
}

\def\XXint#1#2#3{{\setbox0=\hbox{$#1{#2#3}{\int}$ }
		\vcenter{\hbox{$#2#3$ }}\kern-.6\wd0}}

\renewcommand{\geq}{\geqslant}
\renewcommand{\leq}{\leqslant}

% Two definitions; the second doesn't give nice spacing. How to fix this?

%\newcommand{\vbar}{\rule[-.15\baselineskip]{1.3pt}{.75\baselineskip}}
%\newcommand{\pknorm}[2][p,k]{\vbar\hspace{.2ex}{#2}\hspace{.1ex}\vbar_{#1}}

\newcommand{\eps} {\varepsilon}

\renewcommand{\epsilon}{\varepsilon}
\renewcommand{\phi}{\varphi}

\newcommand{\R}{\mathbb{R}}
\newcommand{\N}{\mathbb{N}}

\newcommand{\bu}{{\bf u}}		% Phase space

%\newcommand{\train}{\EuScript{S}}
%\newcommand{\Prob}{\mathit{Prob}}

		% Space of polynomials

	% Pairing of measure and function
%\newcommand{\lip}[2]{( #1,\, #2)}				% Pairing of L2 functions
				% Pairing of L2 functions

\newcommand{\f}{\mathbf{f}}

\newcommand{\train}{\mathcal{S}}

\newcommand{\er}{\EuScript{E}}

\newcommand{\df}{\EuScript{D}}
\newcommand{\dom}{\mathbb{D}}

\newcommand{\res}{\EuScript{R}}

%\newcommannd{\nor}{\mathbf {n}}

\begin{document}

%\date{\today}

\title{Physics Informed Neural Networks (PINNs) \\ for approximating nonlinear dispersive PDEs.}

\author{Genming Bai \footnotemark[1] 
	\and Ujjwal Koley \footnotemark[2]
	\and Siddhartha Mishra \footnotemark[1]
	\and Roberto Molinaro \footnotemark[1]
}

\date{\today}

\maketitle

\medskip
\centerline{$^*$ Seminar for Applied Mathematics (SAM), D-Math}
\centerline{ETH Z\"urich, R\"amistrasse 101.}
\centerline{gbai@student.ethz.ch, siddhartha.mishra@sam.math.ethz.ch,} \centerline{roberto.molinaro@sam.math.ethz.ch}

\medskip
\centerline{$^\dagger$ Centre for Applicable Mathematics, Tata Institute of Fundamental Research}
\centerline{P.O. Box 6503, GKVK Post Office, Bangalore 560065, India}\centerline{ujjwal@math.tifrbng.res.in}

\begin{abstract}
We propose a novel algorithm, based on physics-informed neural networks (PINNs) to efficiently approximate solutions of nonlinear dispersive PDEs such as the KdV-Kawahara, Camassa-Holm and Benjamin-Ono equations. The stability of solutions of these dispersive PDEs is leveraged to prove rigorous bounds on the resulting error. We present several numerical experiments to demonstrate that PINNs can approximate solutions of these dispersive PDEs very accurately. 
\end{abstract}

\section{Introduction}
Deep learning i.e., the use of deep neural networks for regression and classification, has been very successful in many different contexts in science and engineering \cite{DLnat}. These include image analysis, natural language understanding, game intelligence and protein folding. As deep neural networks are universal function approximators, it is natural to employ them as ansatz spaces for solutions of ordinary and partial differential equations, paving the way for their successful use in scientific computing. A very incomplete list of examples where deep learning is used for the numerical solutions of differential equations includes the solution of high-dimensional linear and semi-linear parabolic partial differential equations \cite{HEJ1,E1} and references therein, and for many-query problems such as those arising in uncertainty quantification (UQ), PDE constrained optimization and (Bayesian) inverse problems. Such problems can be recast as parametric partial differential equations and the use of deep neural networks in their solution is explored for elliptic and parabolic PDEs in \cite{OSZ2019,Kuty}, for transport PDEs \cite{PP1} and for hyperbolic and related PDEs \cite{DRM1,LMR1,LMRP1, LMM}, and as operator learning frameworks in \cite{DeepOnets,LMK1,Stu1,Stu2} and references therein. All the afore-mentioned methods are of the \emph{supervised learning} type \cite{DLbook} i.e., the underlying deep neural networks have to be trained on \emph{data}, either available from measurements or generated by numerical simulations. 

However, there are several interesting problems for PDEs where generating training data might be very expensive. A different strategy might be relevant for such problems, namely the so-called \emph{Physics informed neural networks} (PINNs) which collocate the PDE residual on \emph{training points} of the approximating deep neural network, thus obviating the need for generating training data. Proposed originally in \cite{DPT,Lag2,Lag1}, PINNs have been revived and developed in significantly greater detail recently in the pioneering contributions of Karniadakis and collaborators. PINNs have been successfully applied to simulate a variety of forward and inverse problems for PDEs, see \cite{KAR8, jag1, jag2, KAR9,KAR5,KAR6,KAR7,KAR1,KAR2,KAR4,shukla,MM3} and references therein. 

In a recent paper \cite{MM1}, the authors obtain rigorous estimates on the error due to PINNs for the forward problem for a variety of linear and non-linear PDEs, see \cite{MM2} for similar results on inverse problems and \cite{DAR1} for a different perspective on error estimates for PINNs. Following \cite{MM1}, one can expect that PINNs could be efficient at approximating solutions of nonlinear PDEs as long as classical solutions to such PDEs exist and are \emph{stable} in a suitable sense. So far, PINNs have only been proposed and tested for a very small fraction of PDEs. It is quite natural to examine whether they can be efficient at approximating other types of PDEs and in particular, if the considerations of \cite{MM1} apply to these PDEs, then can one derive rigorous error estimates for PINNs ?

In this paper, we investigate the utility of PINNs for approximately a large class of PDEs which arises in physics i.e., non-linear dispersive equations that model different aspects of shallow water waves \cite{Lannes}. These include the famous Korteweg-De Vries (KdV) equation and its high-order extension, the so-called Kawahara equation, the well-known Camassa-Holm type equations and the Benjamin-Ono equations. All these PDEs have several common features, namely
\begin{itemize}
    \item They model dispersive effects in shallow-water waves. 
    \item The interesting dynamics of these equations results from a balance between non-linearity and dispersion. 
    \item They are completely integrable and contain interesting structures such as interacting solitons in their solutions. 
    \item Classical solutions and their stability have been extensively investigated for these equations. 
    \item Standard numerical methods, such as finite-difference \cite{Hol1, Car, Nav, Hol2, Hol3, koley1} and finite-element \cite{koley2, koley4} for approximating these equations can be very expensive computationally. In particular, it can be very costly to obtain low errors due to the high-order (or non-local) derivatives in these equations leading to either very small time-steps for explicit methods or expensive non-linear (or linear) solvers for implicit methods.  
\end{itemize}

Given these considerations, it is very appealing to investigate if PINNs can be successfully applied for efficiently approximating these nonlinear dispersive PDEs. To this end, we adapt the PINNs algorithm to this context in this paper and prove error estimates for PINNs, leveraging the stability of underlying classical solutions into error bounds. Moreover, we perform several numerical experiments for the KdV, Kawahara, generalized Camassa-Holm and Benjamin-Ono equations to ascertain that PINNs can indeed approximate dispersive equations to high-accuracy, at low computational cost. 

The rest of the paper is organized as follows; in section \ref{sec:2}, we briefly recall the PINNs algorithm for PDEs and apply to the KdV-Kawahara PDE in section \ref{sec:3}, generalized Camassa-Holm equations in section \ref{sec:4} and the Benjamin-Ono equations in section \ref{sec:5}.

\section{Physics Informed Neural Networks}
\label{sec:2}
In this section, we follow the recent paper \cite{MM1} and briefly recapitulate the essentials of PINNs for the following abstract PDE,
\subsection{The underlying abstract PDE}
\label{sec:21}
Let $X,Y$ be separable Banach spaces with norms $\| \cdot \|_{X}$ and $\|\cdot\|_{Y}$, respectively. For definiteness, we set $Y = L^p(\dom;\R^m)$ and $X= W^{s,q}(\dom;\R^m)$, for $m \geq 1$, $1 \leq p,q < \infty$ and $s \geq 0$, with $\dom \subset \R^{\bar{d}}$, for some $\bar{d} \geq 1$. In the following, we only consider space-time domains with $\dom = (0,T) \times D \subset \R$, resulting in $\bar{d} = 2$. Let $X^{\ast} \subset X$ and $Y^{\ast} \subset Y$ be closed subspaces with norms $\|\cdot \|_{X^{\ast}}$ and $\|\cdot\|_{Y^{\ast}}$, respectively.

We start by considering the following abstract formulation of our underlying PDE:
\begin{equation}
\label{eq:pde}
\df(\bu) = \f.
\end{equation}
Here, the \emph{differential operator} is a mapping, $\df: X^{\ast} \mapsto Y^{\ast}$ and the \emph{input} $\f \in Y^{\ast}$, such that 
\begin{equation}
\label{eq:assm1}
\begin{aligned}
&(H1): \quad \|\df(\bu)\|_{Y^{\ast}} < +\infty, \quad \forall~ \bu \in X^{\ast}, ~{\rm with}~\|\bu\|_{X^{\ast}} < +\infty. \\
&(H2):\quad \|\f\|_{Y^{\ast}} < +\infty. 
\end{aligned}
\end{equation}
Moreover, we assume that for all $\f \in Y^{\ast}$, there exists a unique $\bu \in X^{\ast}$ such that \eqref{eq:pde} holds. 
\subsection{Quadrature rules}
\label{sec:22}
In the following section, we need to consider approximating integrals of functions. Hence, we need an abstract formulation for quadrature. To this end, we consider a mapping $g: \dom \mapsto \R^m$, such that $g \in Z^{\ast} \subset Y^{\ast}$. We are interested in approximating the integral,
$$
\overline{g}:= \int\limits_{\dom} g(y) dy,
$$
with $dy$ denoting the $\bar{d}$-dimensional Lebesgue measure. In order to approximate the above integral by a quadrature rule, we need the quadrature points $y_{i} \in \dom$ for $1 \leq i \leq N$, for some $N \in \N$ as well as weights $w_i$, with $w_i \in \R_+$. Then a quadrature is defined by,
\begin{equation}
\label{eq:quad}
\overline{g}_N := \sum\limits_{i=1}^N w_i g(y_i),
\end{equation}
for weights $w_i$ and quadrature points $y_i$. We further assume that the quadrature error is bounded as,
\begin{equation}
\label{eq:assm3}
\left|\overline{g} - \overline{g}_N\right| \leq C_{quad}
\left(\|g\|_{Z^{\ast}},\bar{d} \right) N^{-\alpha},
\end{equation}
for some $\alpha > 0$. 
\subsection{PINNs} 
\label{sec:23}
\subsubsection{Neural Networks.}
As PINNs are neural networks, we start a very concise description of them. Given an input $y \in \dom$, a feedforward neural network (also termed as a multi-layer perceptron), shown in figure \ref{fig:1}, transforms it to an output, through multiple layers of units (neurons) which compose of either affine-linear maps between units (in successive layers) or scalar non-linear activation functions within units, resulting in the representation,
\begin{equation}
\label{eq:ann1}
\bu_{\theta}(y) = C_K \circ\sigma \circ C_{K-1}\ldots \ldots \ldots \circ\sigma \circ C_2 \circ \sigma \circ C_1(y).
\end{equation} 
Here, $\circ$ refers to the composition of functions and $\sigma$ is a scalar (non-linear) activation function. Examples for the activation function $\sigma$ in \eqref{eq:ann1} include the sigmoid function, the hyperbolic tangent function and the \emph{ReLU} function.

For any $1 \leq k \leq K$, we define
\begin{equation}
\label{eq:C}
C_k z_k = W_k z_k + b_k, \quad {\rm for} ~ W_k \in \R^{d_{k+1} \times d_k}, z_k \in \R^{d_k}, b_k \in \R^{d_{k+1}}.
\end{equation}
For consistency of notation, we set $d_1 = \bar{d}$ and $d_K = m$. 

Our neural network \eqref{eq:ann1} consists of an input layer, an output layer and $(K-1)$ hidden layers for some $1 < K \in \N$. The $k$-th hidden layer (with $d_k$ neurons) is given an input vector $z_k \in \R^{d_k}$ and transforms it first by an affine linear map $C_k$ \eqref{eq:C} and then by a nonlinear (component wise) activation $\sigma$. A straightforward addition shows that our network contains $\left(\bar{d} + m + \sum\limits_{k=2}^{K-1} d_k\right)$ neurons. 
We also denote, 
\begin{equation}
\label{eq:theta}
\theta = \{W_k, b_k\},~ \theta_W = \{ W_k \}\quad \forall~ 1 \leq k \leq K,
\end{equation} 
to be the concatenated set of (tunable) weights for our network. It is straightforward to check that $\theta \in \Theta \subset \R^M$ with
\begin{equation}
\label{eq:ns}
M = \sum\limits_{k=1}^{K-1} (d_k +1) d_{k+1}.
\end{equation}

\begin{figure}[htbp]
\centering
\includegraphics[width=8cm]{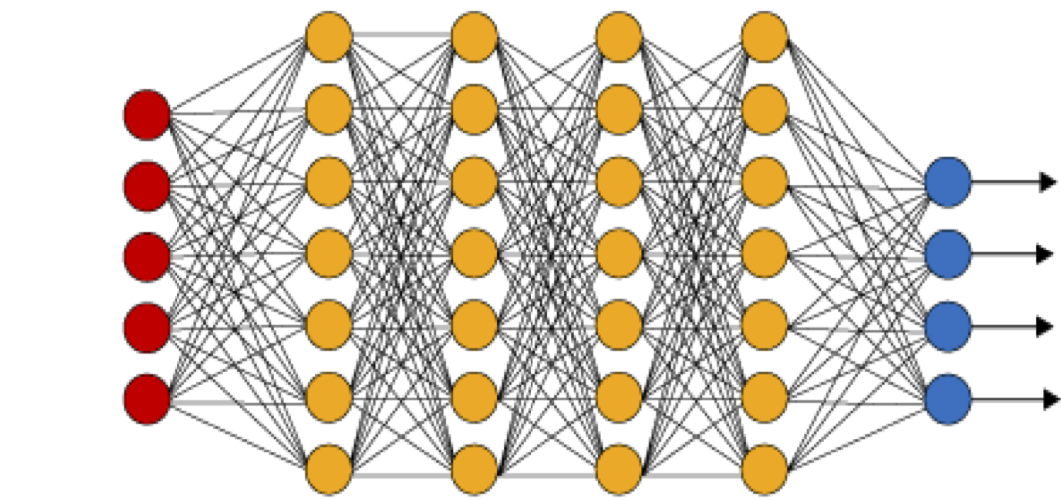}
\caption{An illustration of a (fully connected) deep neural network. The red neurons represent the inputs to the network and the blue neurons denote the output layer. They are
connected by hidden layers with yellow neurons. Each hidden unit (neuron) is connected by affine linear maps between units in different layers and then with nonlinear (scalar) activation functions within units.}
\label{fig:1}
\end{figure}

\subsubsection{Training PINNs: Loss functions and optimization}
The neural network $\bu_{\theta}$ \eqref{eq:ann1} depends on the tuning parameter $\theta \in \Theta$ of weights and biases. Within the standard paradigm of deep learning \cite{DLbook}, one \emph{trains} the network by finding tuning parameters $\theta$ such that the loss (error, mismatch, regret) between the neural network and the underlying target is minimized. Here, our target is the solution $\bu \in X^{\ast}$ of the abstract PDE \eqref{eq:pde} and we wish to find the tuning parameters $\theta$ such that the resulting neural network $\bu_{\theta}$ approximates $\bu$. 

Following standard practice of machine learning, one obtains training data $\bu(y)$, for all $y \in \train$, with training set $\train \subset \dom$ and then minimizes a loss function of the form $\sum\limits_{\train} \|\bu(y) - \bu_{\theta}(y)\|_{X}$ to find the neural network approximation for $\bu$. However, obtaining this training data requires possibly expensive numerical simulations of the underlying PDE \eqref{eq:pde}. In order to circumvent this issue, the authors of \cite{Lag1} suggest a different strategy. An abstract paraphrasing of this strategy runs as follows: we assume that for every $\theta \in \Theta$, the neural network $\bu_{\theta} \in X^{\ast}$ and $\|\bu_{\theta} \|_{X^{\ast}} < +\infty$. We define the following \emph{residual}:
\begin{equation}
    \label{eq:res1}
    \res_{\theta} = \res(\bu_\theta):= \df\left(\bu_{\theta}\right) - \f. 
\end{equation}
By assumptions (H1),(H2) (cf. \eqref{eq:assm1}), we see that $\res_{\theta} \in Y^{\ast}$ and $\|\res_{\theta}\|_{Y^{\ast}} < +\infty$ for all $\theta \in \Theta$. Note that $\res(\bu) = \df(\bu) - \f \equiv 0$, for the solution $\bu$ of the PDE \eqref{eq:pde}. Hence, the term \emph{residual} is justified for \eqref{eq:res1}. 

The strategy of PINNs, following \cite{Lag1}, is to minimize the \emph{residual} \eqref{eq:res1}, over the admissible set of tuning parameters $\theta \in \Theta$ i.e 
\begin{equation}
    \label{eq:pinn1}
    {\rm Find}~\theta^{\ast} \in \Theta:\quad \theta^{\ast} = {\rm arg}\min\limits_{\theta \in \Theta} \|\res_{\theta}\|_{Y}.
\end{equation}
Realizing that $Y = L^p(\dom)$ for some $1 \leq p < \infty$, we can equivalently minimize,
\begin{equation}
    \label{eq:pinn2}
    {\rm Find}~\theta^{\ast} \in \Theta:\quad \theta^{\ast} = {\rm arg}\min\limits_{\theta \in \Theta} \|\res_{\theta}\|^p_{L^p(\dom)} = {\rm arg}\min\limits_{\theta \in \Theta} \int\limits_{\dom} |\res_{\theta}(y)|^p dy. 
\end{equation}
As it will not be possible to evaluate the integral in \eqref{eq:pinn2} exactly, we need to approximate it numerically by a quadrature rule. To this end, we use the quadrature rules \eqref{eq:quad} discussed earlier and select the \emph{training set} $\train = \{y_n\}$ with $y_n \in \dom$ for all $1 \leq n \leq N$ as the quadrature points for the quadrature rule \eqref{eq:quad} and consider the following \emph{loss function}:
\begin{equation}
    \label{eq:lf1}
    J(\theta):= \sum\limits_{n=1}^N w_n |\res_{\theta}(y_n)|^p = \sum\limits_{n=1}^N w_n \left| \df(\bu_{\theta}(y_n)) - \f(y_n) \right|^p.
\end{equation}
It is common in machine learning \cite{DLbook} to regularize the minimization problem for the loss function i.e we seek to find,
\begin{equation}
\label{eq:lf2}
\theta^{\ast} = {\rm arg}\min\limits_{\theta \in \Theta} \left(J(\theta) + \lambda_{reg} J_{reg}(\theta) \right).
\end{equation}  
Here, $J_{reg}:\Theta \to \R$ is a \emph{regularization} (penalization) term. A popular choice is to set  $J_{reg}(\theta) = \|\theta_W\|^q_q$ for either $q=1$ (to induce sparsity) or $q=2$. The parameter $0 \leq \lambda_{reg} \ll 1$ balances the regularization term with the actual loss $J$ \eqref{eq:lf1}. 

The proposed algorithm for computing this PINN is given below,
\begin{algorithm} 
\label{alg:PINN} {\bf Finding a physics informed neural network to approximate the solution of the very general form PDE \eqref{eq:pde}}. 
\begin{itemize}
\item [{\bf Inputs}:] Underlying domain $\dom$, differential operator $\df$ and input source term $\f$ for the PDE \eqref{eq:pde}, quadrature points and weights for the quadrature rule \eqref{eq:quad}, non-convex gradient based optimization algorithms.
\item [{\bf Goal}:] Find PINN $\bu^{\ast}= \bu_{\theta^{\ast}}$ for approximating the PDE \eqref{eq:pde}. 
\item [{\bf Step $1$}:] Choose the training set $\train = \{y_n\}$ for $y_n \in \dom$, for all $1 \leq n \leq N$ such that $\{y_n\}$ are quadrature points for the underlying quadrature rule \eqref{eq:quad}.
\item [{\bf Step $2$}:] For an initial value of the weight vector $\overline{\theta} \in \Theta$, evaluate the neural network $\bu_{\overline{\theta}}$ \eqref{eq:ann1}, the PDE residual \eqref{eq:res1}, the loss function \eqref{eq:lf2} and its gradients to initialize the underlying optimization
algorithm.
\item [{\bf Step $3$}:] Run the optimization algorithm till an approximate local minimum $\theta^{\ast}$ of \eqref{eq:lf2} is reached. The map $\bu^{\ast} = \bu_{\theta^{\ast}}$ is the desired PINN for approximating the solution $\bu$ of the PDE \eqref{eq:pde}. 
\end{itemize}
\end{algorithm}

\section{Korteweg de-Vries $\&$ Kawahara equations}
\label{sec:3}
We will apply the PINNs algorithm \ref{alg:PINN} to several examples of non-linear dispersive PDEs. We start with the well-known KdV-Kawahara equations.
\subsection{The underlying PDEs}
The general form of the KdV-Kawahara equation is given by,
\begin{equation}
    \label{eq:heat}
    \begin{aligned}
    u_t + u u_x + \alpha u_{xxx} - \beta u_{xxxxx}&= 0, \quad \forall ~ x\in (0,1), \, t \in (0,T), \\
    u(x,0) &= \bar{u}(x), \quad \forall ~ x \in (0,1), \\
    u(0,t) &= h_1(t), \quad \forall ~ t \in (0,T), \\
    u(1,t) &= h_2(t), \quad \forall ~ t \in (0,T), \\
    u_x(0,t) &=h_3(t), \quad \forall ~ t \in (0,T), \\
    u_x(1,t) &=h_4(t), \quad \forall ~ t \in (0,T), \\
    u_{xx}(1,t) &= h_5(t), \quad \forall ~ t \in (0,T). 
    \end{aligned}
\end{equation}
Here $\alpha, \beta $ are non-negative real constants.
Note that if $\beta =0$, then the above equation is called Korteweg de-Vries (KdV) equation, and if $\beta \neq 0$, then the above equation is called the Kawahara equation. It is well known that KdV equation plays a pivotal role in the modeling of shallow water waves, and in particular, the one-dimensional waves of small but finite amplitude in dispersive systems can be described by the KdV equation. However, under certain circumstances, the coefficient of the third order derivative in the KdV equation may become very small or even zero \cite{hunter}. In such a scenario, one has to take account of the higher order effect of dispersion in order to balance the nonlinear effect, which leads to the Kawahara equation. 

For the sake of simplicity it will be assumed $\alpha=\beta=1$ in the upcoming analysis, since their values are not relevant in the present setting, while emphasizing that that the subsequent analysis also holds for the case $\beta=0$ (KdV equations). Regarding the existence and stability of solutions to \eqref{eq:heat}, we closely follow the work by Faminskii $\&$ Larkin \cite{andrei}, and recall the following result.
\begin{theorem}
\label{002}
For any integer $k\ge 0$, $n \in \N$, $l=1$ or $2$, define the spaces
\begin{align*}
\mathcal{X}_k((0,1)\times (0,T))& := \Big\{  u: \partial^n_t u  \in C([0,T]; H^{5(k-n)}(0,1)) \cap L^2((0,T); H^{5(k-n)+1}(0,1))\Big\}, \\
\mathcal{B}^l_k(0,T)& := \displaystyle \prod_{j=0}^{l} H^{k + (2-j)/5} (0,T).
\end{align*}
Let $\bar{u} \in H^{5k}(0,1)$, boundary data $(h_1, h_3) \in \mathcal{B}^1_k(0,T)$, and $(h_2, h_4, h_5) \in \mathcal{B}^2_k(0,T)$ satisfy the natural compatibility conditions. Then there exists a unique solution $u \in \mathcal{X}_k$, and the flow map is Lipschitz continuous on any ball in the corresponding norm.
\end{theorem}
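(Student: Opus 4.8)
The plan is to follow the standard route for initial--boundary value problems associated with higher--order dispersive equations: first solve the \emph{linear} Kawahara problem with the prescribed boundary operators, then close the nonlinearity by a contraction argument, and finally read off Lipschitz dependence from the energy estimate applied to differences of solutions. The linear problem
\[
u_t + u_{xxx} - u_{xxxxx} = f, \qquad u(\cdot,0)=\bar u,
\]
supplemented with the five boundary conditions of \eqref{eq:heat}, is the heart of the matter. First I would establish its well--posedness by an energy method. Multiplying by $u$ and integrating by parts over $(0,1)$ produces, besides $\tfrac12\tfrac{d}{dt}\|u\|_{L^2}^2$, a collection of boundary terms; the specific placement of the conditions (two at $x=0$, three at $x=1$) is dictated precisely by the requirement that the resulting boundary quadratic form be nonnegative. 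Indeed, for homogeneous data one computes $\tfrac12\tfrac{d}{dt}\|u\|_{L^2}^2 + \tfrac12\,u_{xx}(0,t)^2 = 0$, where the dissipative trace $u_{xx}(0,t)$ is free precisely because $u_{xx}(1,t)$ is the one prescribed by $h_5$. Integrating in time already yields the $L^2$ bound and, as a by--product, the trace estimate $\int_0^T u_{xx}(0,t)^2\,dt\le\|\bar u\|_{L^2}^2$.

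The gain of one spatial derivative recorded in the space $\mathcal{X}_k$ (the difference between $C([0,T];H^{5(k-n)})$ and $L^2((0,T);H^{5(k-n)+1})$) is the Kato local smoothing effect, which I would extract by repeating the energy computation with the multiplier $(1+x)u$ in place of $u$: the third-- and fifth--order terms then contribute positive definite interior integrals of $u_x^2$ (and $u_{xx}^2$), yielding the asserted $L^2((0,T);H^{1})$ control beyond the energy space. The full scale $\mathcal{X}_k$ for $k\ge1$ follows inductively by differentiating the equation in $t$, converting time derivatives into spatial ones via the equation itself, $\partial_t u = -u_{xxx}+u_{xxxxx}-uu_x$, and applying the same weighted estimates to the differentiated system. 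This anisotropy, $t\sim x^5$, is exactly what fixes the fractional time--regularity exponents $k+(2-j)/5$ in the definition of $\mathcal{B}_k^l$: a spatial trace of order $m$ of the solution lives in $H^{m/5}$ in time.

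Once the linear solution operator is shown to be bounded from the data spaces into $\mathcal{X}_k$, the nonlinear problem is solved by a fixed--point argument. I would define the map sending $v$ to the solution $u$ of the linear problem with forcing $f=-vv_x$, and show it is a contraction on a ball in $\mathcal{X}_k$ for $T$ small; the required bilinear bound $\|vv_x\|_{Y}\lesssim\|v\|_{\mathcal{X}_k}^2$ follows from the Sobolev embedding $H^{5(k-n)+1}\hookrightarrow L^\infty$ together with the smoothing gain. Extension to arbitrary $T$ then rests on the a priori bounds coming from the conserved/dissipated quantities of the equation. Finally, for two solutions $u,v$ the difference $w=u-v$ satisfies the linear Kawahara problem with forcing $-\tfrac12\big((u+v)w\big)_x$ and with data given by the differences of the initial and boundary data; the same energy estimate yields $\|w\|_{\mathcal{X}_k}\le C\,\delta$, where $\delta$ measures the distance between the two data sets. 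This is simultaneously the uniqueness statement and the asserted Lipschitz continuity of the flow map on any bounded ball.

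I expect the genuinely hard step to be the sharp \emph{trace theory} for the inhomogeneous linear problem: matching the boundary data to the anisotropic spaces $\mathcal{B}_k^l$ with the exact fractional exponents $(2-j)/5$, verifying the natural compatibility conditions at the corners $(0,0)$ and $(1,0)$, and handling the inhomogeneous boundary terms, which spoil the clean sign of the boundary quadratic form and must be absorbed by first lifting the data and then controlling the correction via interpolation. This is precisely the technical core carried out by Faminskii and Larkin, and it is where the particular choice of boundary conditions in \eqref{eq:heat} earns its keep.
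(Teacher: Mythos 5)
The paper does not prove this theorem at all: it is quoted verbatim from Faminskii and Larkin \cite{andrei} as a known well-posedness result, so there is no internal proof to compare against. Judged on its own terms, your outline is the correct standard strategy for this class of results and matches what the cited reference actually does: the energy identity you compute is right (for homogeneous boundary data one indeed gets $\tfrac12\tfrac{d}{dt}\|u\|_{L^2}^2+\tfrac12 u_{xx}(0,t)^2=0$, which explains the $2$--$3$ split of the boundary conditions), the weighted multiplier $(1+x)u$ is the standard device for the local smoothing gain recorded in $\mathcal{X}_k$, and your scaling heuristic $t\sim x^5$ correctly reproduces the exponents $k+(2-j)/5$ in $\mathcal{B}^l_k$.

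That said, as a standalone proof the proposal has a genuine gap, and you have located it yourself: everything you actually carry out (the a priori identities, the fixed point, the difference estimate) is the easy part, while the inhomogeneous linear theory --- constructing boundary potentials, proving that the solution map is bounded from $H^{5k}\times\mathcal{B}^1_k\times\mathcal{B}^2_k$ into $\mathcal{X}_k$ with the sharp fractional trace exponents, and formulating the compatibility conditions at the corners --- is asserted rather than proved. Two further points are glossed over: (i) the passage from local to global existence at the regularity level $k\ge 1$ does not follow merely from ``conserved/dissipated quantities,'' since with inhomogeneous boundary data there are no exact conservation laws and one must propagate higher norms via the smoothing estimates; and (ii) the contraction argument needs the bilinear estimate to close in the full norm of $\mathcal{X}_k$ including the $L^2_tH^{5(k-n)+1}_x$ component, not just via $H^1\hookrightarrow L^\infty$. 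None of this invalidates the outline, but it means the proposal is a roadmap to the Faminskii--Larkin proof rather than a replacement for it --- which is consistent with the paper's own choice to cite the result rather than reprove it.
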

By choosing appropriate values of $k$ (for our purpose, we take $k=2$) in the above theorem, we readily infer the existence of classical solutions of the Kawahara equations \eqref{eq:heat} by the embedding of Sobolev spaces in the $C^{\ell}$ spaces. 
\subsection{PINNs for the KdV-Kawahara Equations \eqref{eq:heat}}
We apply algorithm \ref{alg:PINN} to approximate the solutions of \eqref{eq:heat}. To this end, we need the following steps,
\subsubsection{Training Set.}
\label{sec:train}
Let us define the space-time domain $\Omega_T = (0,1) \times (0,T)$, and divide the training set $\train = \train_{int} \cup \train_{sb} \cup \train_{tb}$ of the abstract PINNs algorithm \ref{alg:PINN} into the following three subsets, 
\begin{itemize}
\item [(a)] Interior training points $\train_{int}=\{y_n\}$ for $1 \leq n \leq N_{int}$, with each $y_n = (x_n,t_n) \in \Omega_T$. We use low-discrepancy Sobol points as training points.
\item [(b)] Spatial boundary training points $\train_{sb} = (0,t_n) \cup (1,t_n)$ for $1 \leq n \leq N_{sb}$, and the points $t_n$ chosen as low-discrepancy Sobol points.
\item [(c)] Temporal boundary training points $\train_{tb} = \{x_n\}$, with $1 \leq n \leq N_{tb}$ and each $x_n \in (0,1)$, chosen as low-discrepancy Sobol points. 
\end{itemize}
\subsubsection{Residuals}
To define residuals for the neural network $u_{\theta} \in C^5([0,T]\times [0,1])$, defined by \eqref{eq:ann1}, with $\theta \in \Theta$ as the set of tuning parameters, we use the hyperbolic tangent $\tanh$ activation function, i.e., $\sigma = \tanh$. With this setting, we define the following residuals
\begin{itemize}
\item [(a)] Interior Residual given by,
\begin{equation}
\label{eq:hres1}
\res_{int,\theta}(x,t):= \partial_t u_{\theta}(x,t) + u_{\theta} (u_{\theta})_x(x,t) + (u_{\theta})_{xxx}(x,t) - (u_{\theta})_{xxxxx}(x,t).
\end{equation}
Note that the above residual is well-defined and $\res_{int,\theta} \in C([0,T]\times [0,1])$ for every $\theta \in \Theta$. 
\item [(b)] Spatial boundary Residual given by,
\begin{equation}
\begin{aligned}
    \label{eq:hres2}
    \res_{sb1,\theta}(0,t) & := u_{\theta}(0,t) -h_1(t), \quad \forall  t \in (0,T), \\
    \res_{sb2,\theta}(1,t) & := u_{\theta}(1,t) -h_2(t), \quad \forall  t \in (0,T), \\
    \res_{sb3,\theta}(0,t) & := (u_{\theta})_x(0,t) -h_3(t), \quad \forall  t \in (0,T),\\
    \res_{sb4,\theta}(1,t) & := (u_{\theta})_x(1,t) -h_4(t), \quad \forall  t \in (0,T),\\
    \res_{sb5,\theta}(1,t) & := (u_{\theta})_{xx}(1,t) -h_5(t), \quad \forall  t \in (0,T).
    \end{aligned}
\end{equation}
Given the fact that the neural network and boundary data are smooth, above residuals are well-defined. 
\item [(c)] Temporal boundary Residual given by,
\begin{equation}
    \label{eq:hres3}
    \res_{tb,\theta}(x):= u_{\theta}(x,0) - \bar{u}(x), \quad \forall x \in (0,1). 
\end{equation}
Again the above quantity is well-defined and $\res_{tb,\theta} \in C^5((0,1))$, as both the initial data and the neural network are smooth. 
\end{itemize}
\subsubsection{Loss function}
We set the following loss function
\begin{equation}
    \label{eq:hlf}
    J(\theta):= \sum\limits_{n=1}^{N_{tb}} w^{tb}_n|\res_{tb,\theta}(x_n)|^2 + \sum\limits_{n=1}^{N_{sb}} \sum\limits_{i=1}^{5}  w^{sb}_n|\res_{sbi,\theta}(t_n)|^2 + \lambda \sum\limits_{n=1}^{N_{int}} w^{int}_n|\res_{int,\theta}(x_n,t_n)|^2 .
\end{equation}
Here the residuals are defined by \eqref{eq:hres3}, \eqref{eq:hres2}, \eqref{eq:hres1}, $w^{tb}_n$ are the $N_{tb}$ quadrature weights corresponding to the temporal boundary training points $\train_{tb}$, $w^{sb}_n$ are the $N_{sb}$ quadrature weights corresponding to the spatial boundary training points $\train_{sb}$ and $w^{int}_n$ are the $N_{int}$ quadrature weights corresponding to the interior training points $\train_{int}$. Furthermore, $\lambda$ is a hyperparameter for balancing the residuals, on account of the PDE and the initial and boundary data, respectively.
\subsection{Estimate on the generalization error}
We are interested in estimating the following generalization error for the PINN $u^* =u_{\theta^*}$ with loss function \eqref{eq:hlf}, for approximating the solution of \eqref{eq:heat}:
\begin{equation}
    \label{eq:hegen}
    \er_{G}:= \left(\int\limits_0^T \int\limits_0^1 |u(x,t) - u^{\ast}(x,t)|^2 dx dt \right)^{\frac{1}{2}}.
\end{equation}
We are going to estimate the generalization error in terms of the \emph{training error} that we define as,
\begin{equation}
    \label{eq:hetrain}
    \er^2_{T}:= \underbrace{\sum\limits_{n=1}^{N_{tb}} w^{tb}_n|\res_{tb,\theta^{\ast}}(x_n)|^2}_{(\er_T^{tb})^2} + \underbrace{\sum\limits_{n=1}^{N_{sb}} \sum\limits_{i=1}^{5} w^{sb}_n|\res_{sbi,\theta^{\ast}}(t_n)|^2}_{(\er_T^{sb})^2} +  \lambda\underbrace{\sum\limits_{n=1}^{N_{int}} w^{int}_n|\res_{int,\theta^{\ast}}(x_n,t_n)|^2}_{(\er_T^{int})^2}.
\end{equation}
Note that the training error can be readily computed \emph{a posteriori} from the loss function \eqref{eq:hlf}. 

We also need the following assumptions on the quadrature error. For any function $g \in C^k(\Omega)$, the quadrature rule corresponding to quadrature weights $w^{tb}_n$ at points $x_n \in \train_{tb}$, with $1 \leq n \leq N_{tb}$, satisfies 
\begin{equation}
    \label{eq:hquad1}
    \left| \int\limits_{\Omega} g(x) dx - \sum\limits_{n=1}^{N_{tb}} w^{tb}_n g(x_n)\right| \leq C^{tb}_{quad}(\|g\|_{C^k}) N_{tb}^{-\alpha_{tb}}.
\end{equation}
For any function $g \in C^k(\partial\Omega \times [0,T])$, the quadrature rule corresponding to quadrature weights $w^{sb}_n$ at points $(x_n,t_n) \in \train_{sb}$, with $1 \leq n \leq N_{sb}$, satisfies 
\begin{equation}
    \label{eq:hquad2}
    \left| \int\limits_0^T \int\limits_{\partial\Omega} g(x,t) ds(x) dt - \sum\limits_{n=1}^{N_{sb}} w^{sb}_n g(x_n,t_n)\right| \leq C^{sb}_{quad}(\|g\|_{C^k}) N_{sb}^{-\alpha_{sb}}.
\end{equation}
Finally, for any function $g \in C^\ell(\Omega \times [0,T])$, the quadrature rule corresponding to quadrature weights $w^{int}_n$ at points $(x_n,t_n) \in \train_{int}$, with $1 \leq n \leq N_{int}$, satisfies 
\begin{equation}
    \label{eq:hquad3}
    \left| \int\limits_0^T \int\limits_{\Omega} g(x,t) dx dt - \sum\limits_{n=1}^{N_{int}} w^{int}_n g(x_n,t_n)\right| \leq C^{int}_{quad}(\|g\|_{C^\ell}) N_{int}^{-\alpha_{int}}.
\end{equation}
In the above, $\alpha_{int},\alpha_{sb},\alpha_{tb} > 0$ and in principle, different order quadrature rules can be used. We estimate the generalization error for the PINN in the following,
\begin{theorem}
\label{thm:heat}
Let $u \in C^5([0,1] \times [0,T])$ be the unique classical solution of the Korteweg de-Vries $\&$ Kawahara equation \eqref{eq:heat}. Let $u^{\ast} = u_{\theta^{\ast}}$ be a PINN generated by algorithm \ref{alg:PINN}, corresponding to loss function \eqref{eq:lf2}, \eqref{eq:hlf}. Then the generalization error \eqref{eq:hegen} can be estimated as, 
\begin{equation}
\label{result_01}
	\begin{aligned}
	\epsilon_G &\leq C_1\big(\epsilon_T^{tb} + \epsilon_T^{int} + C_2(\epsilon_T^{sb}) + C_3(\epsilon_T^{sb})^{1/2} \\
	&+ (C_{quad}^{tb})^{1/2} N_{tb}^{-\alpha_{tb} / 2} + (C_{quad}^{int})^{1/2} N_{int}^{-\alpha_{int} / 2} + C_2(C_{quad}^{sb})^{1/2} N_{sb}^{-\alpha_{sb} / 2} + C_3(C_{quad}^{sb})^{1/4} N_{sb}^{-\alpha_{sb} / 4}\big),
	\end{aligned}
	\end{equation}
	where 
\begin{equation}
\begin{aligned}
C_1 &= \sqrt{T + 2C_4T^2e^{2C_4T}}, \quad
C_2 = \sqrt{\Vert u \Vert_{C_t^0C_x^0} + 1}, \\
C_3 &= \sqrt{10(\Vert u^* \Vert_{C_t^0C^4_x} + \Vert u \Vert_{C_t^0C^4_x})T^{1/2}}, \quad
C_4 = \Vert u^* \Vert_{C_t^0C_x^1} + \frac{1}{2}\Vert u \Vert_{C_t^0C_x^1} + \frac{1}{2},
\end{aligned}
\end{equation}
%%%%%%%%%%%%%%%%%%%%%%%%%%%%%%%%%%%%%%%%%%%%%%%%%%
% \begin{equation}
%     \label{eq:hegenb}
%     \er_G \leq C_T \left(\er_T^{tb}+\er_T^{int}+C_2(\er_T^{sb})+C_3(\er_T^{sb})^{\frac{1}{2}} + (C_{quad}^{tb})^{\frac{1}{2}}N_{tb}^{-\frac{\alpha_{tb}}{2}} +  (C_{quad}^{int})^{\frac{1}{2}}N_{int}^{-\frac{\alpha_{int}}{2}} + C_2(C_{quad}^{sb})^{\frac{1}{2}}N_{sb}^{-\frac{\alpha_{sb}}{2}} + C_3  (C_{quad}^{sb})^{\frac{1}{4}}N_{sb}^{-\frac{\alpha_{sb}}{4}}             \right),
% \end{equation}
% with constants given by,
% \begin{equation}
%     \label{eq:hct}
%     \begin{aligned}
%         C_T &= \sqrt{T + 2C_1T^2e^{2C_1T}}, \\
%         \quad C_1 &= \Vert u^* \Vert_{C_x^1} + \frac{1}{2}\Vert u \Vert_{C_x^1} + \frac{1}{2}, \\
%         C_2 &= \sqrt{\Vert u \Vert_{C_x^0} + 1} \\
% 	    C_3 &= \sqrt{2(\Vert u^* \Vert_{C^4_x} + \Vert u \Vert_{C^4_x})T^{1/2}} \\
% \end{aligned}
% \end{equation}
%%%%%%%%%%%%%%%%%%%%%%%%%%%%%%%%%%%%%%%%%%%%%%%%%%
and $C_{quad}^{tb} = C_{quad}^{tb}(\|\res_{tb,\theta^{\ast}}\|_{C^5})$, $C_{quad}^{sb} = C_{quad}^{sb}(\sum\limits_{i=1}^{5}\|\res_{sbi,\theta^{\ast}}\|_{C^{3}})$ and $C_{quad}^{int} = C_{quad}^{int}(\|\res_{int,\theta^{\ast}}\|_{C^{0}})$ are the constants defined by the quadrature error \eqref{eq:hquad1}, \eqref{eq:hquad2}, \eqref{eq:hquad3}, respectively. 
\end{theorem}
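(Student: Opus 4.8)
The plan is to run a weighted $L^2$ energy estimate on the error $\hat u := u^\ast - u$ and to convert it into a bound on $\epsilon_G^2 = \int_0^T\|\hat u(\cdot,t)\|_{L^2(0,1)}^2\,dt$ by Gr\"onwall's inequality, finally trading the continuous residual integrals for the computable training errors through the quadrature assumptions \eqref{eq:hquad1}--\eqref{eq:hquad3}. Since $u$ solves \eqref{eq:heat} exactly while $\res_{int,\theta^\ast}$ is the residual of $u^\ast$, subtracting the two equations shows that $\hat u$ satisfies the error equation $\partial_t\hat u + \big(u^\ast(u^\ast)_x - uu_x\big) + \hat u_{xxx} - \hat u_{xxxxx} = \res_{int,\theta^\ast}$, with initial datum $\hat u(\cdot,0)=\res_{tb,\theta^\ast}$ and boundary values $\hat u(0,t)=\res_{sb1,\theta^\ast}$, $\hat u(1,t)=\res_{sb2,\theta^\ast}$, $\hat u_x(0,t)=\res_{sb3,\theta^\ast}$, $\hat u_x(1,t)=\res_{sb4,\theta^\ast}$, $\hat u_{xx}(1,t)=\res_{sb5,\theta^\ast}$.

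First I would multiply the error equation by $\hat u$ and integrate over $(0,1)$, so the time term gives $\tfrac12\frac{d}{dt}\|\hat u\|_{L^2}^2$. For the nonlinearity I use the splitting $u^\ast(u^\ast)_x - uu_x = \hat u\,(u^\ast)_x + u\,\hat u_x$; integrating the second piece by parts produces a bulk contribution controlled by $\big(\|u^\ast\|_{C_t^0C_x^1} + \tfrac12\|u\|_{C_t^0C_x^1}\big)\|\hat u\|_{L^2}^2$ plus the boundary term $\tfrac12[u\hat u^2]_0^1$. The interior residual term is handled by Young's inequality, $\int_0^1\hat u\,\res_{int,\theta^\ast}\,dx\le \tfrac12\|\hat u\|_{L^2}^2 + \tfrac12\|\res_{int,\theta^\ast}\|_{L^2}^2$; together these reproduce $C_4 = \|u^\ast\|_{C_t^0C_x^1} + \tfrac12\|u\|_{C_t^0C_x^1} + \tfrac12$ as the coefficient of $\|\hat u\|_{L^2}^2$ and the clean $\|\res_{int,\theta^\ast}\|_{L^2}^2$ forcing.

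The crux is the dispersive contribution $-\int_0^1\hat u\hat u_{xxx}\,dx + \int_0^1\hat u\hat u_{xxxxx}\,dx$. Integrating by parts (once for the third-order term, twice for the fifth-order one) cancels all interior integrals and leaves only boundary evaluations at $x=0,1$ of $\hat u,\hat u_x,\hat u_{xx},\hat u_{xxx},\hat u_{xxxx}$. I would then classify these into three groups. The genuinely uncontrolled term $-\tfrac12\hat u_{xx}(0,t)^2$ carries the dissipative sign and is simply dropped; the products of two boundary residuals (such as $\hat u(1)\hat u_{xx}(1)=\res_{sb2,\theta^\ast}\res_{sb5,\theta^\ast}$ and $\tfrac12\hat u_x(1)^2=\tfrac12\res_{sb4,\theta^\ast}^2$), together with the nonlinear boundary term $\tfrac12[u\hat u^2]_0^1$, are quadratic in the boundary residuals and yield the factor $C_2=\sqrt{\|u\|_{C_t^0C_x^0}+1}$; and the five mixed terms in which one boundary residual multiplies an uncontrolled derivative of order at most four (e.g. $\hat u(1)\hat u_{xxxx}(1)$, $\hat u_x(0)\hat u_{xxx}(0)$) are bounded by $|\res_{sb}|\,(\|u^\ast\|_{C_t^0C_x^4}+\|u\|_{C_t^0C_x^4})$, which after Cauchy--Schwarz in $t$ and the $T^{1/2}$ factor produce the square-root dependence $C_3(\epsilon_T^{sb})^{1/2}$ with $C_3=\sqrt{10(\|u^\ast\|_{C_t^0C_x^4}+\|u\|_{C_t^0C_x^4})T^{1/2}}$ (the $10$ arising from the five mixed terms with a factor of two). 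The delicate point on which the whole estimate rests is verifying that the well-posed choice of exactly these five boundary conditions leaves precisely the single uncontrolled square $-\tfrac12\hat u_{xx}(0,t)^2$ and that it has the correct dissipative sign; tracking the integration-by-parts bookkeeping for the fifth-order term without sign errors is the main obstacle.

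Finally, assembling the estimates gives the differential inequality $\frac{d}{dt}\|\hat u\|_{L^2}^2 \le 2C_4\|\hat u\|_{L^2}^2 + \|\res_{int,\theta^\ast}\|_{L^2}^2 + (\text{boundary forcing})$. I would integrate it via Gr\"onwall and then once more in $t\in(0,T)$; using the elementary inequality $e^{x}\le 1 + xe^{x}$ to bound $\int_0^T e^{2C_4 t}\,dt \le T + 2C_4T^2e^{2C_4T}$ produces the prefactor $C_1=\sqrt{T+2C_4T^2e^{2C_4T}}$. The remaining step is purely to pass from continuous residual integrals to the training errors: each $\int\res^2$ is replaced by its quadrature sum plus a quadrature error through \eqref{eq:hquad1}--\eqref{eq:hquad3}, and the inequality $\sqrt{a+b}\le\sqrt a+\sqrt b$ turns these into the additive quadrature contributions $(C^{tb}_{quad})^{1/2}N_{tb}^{-\alpha_{tb}/2}$, $(C^{int}_{quad})^{1/2}N_{int}^{-\alpha_{int}/2}$, $C_2(C^{sb}_{quad})^{1/2}N_{sb}^{-\alpha_{sb}/2}$ and $C_3(C^{sb}_{quad})^{1/4}N_{sb}^{-\alpha_{sb}/4}$, the last carrying a fourth root precisely because that boundary group was only linear in the residual. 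This yields \eqref{result_01}.
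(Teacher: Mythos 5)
Your proposal is correct and follows essentially the same route as the paper: the same $L^2$ energy estimate on $\hat u=u^\ast-u$, the same integration-by-parts bookkeeping for $-\int_0^1\hat u\hat u_{xxx}\,dx+\int_0^1\hat u\hat u_{xxxxx}\,dx$ with the single uncontrolled square $-\tfrac12\hat u_{xx}(0,t)^2$ discarded by its favourable sign, the same splitting of the boundary terms into quadratic and linear (mixed) groups producing $C_2\,\epsilon_T^{sb}$ and $C_3(\epsilon_T^{sb})^{1/2}$, and the same Gr\"onwall--double-integration--quadrature conclusion. Your two-term splitting of the nonlinearity, $u^\ast u^\ast_x-uu_x=\hat u\,u^\ast_x+u\,\hat u_x$, is algebraically equivalent to the paper's three-term identity and yields the identical constant $C_4$.
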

\begin{proof}
It is easy to see that the error $\hat{u}: u^{\ast} - u$ satisfies the following equations,
\begin{equation}
    \label{eq:herr}
\begin{aligned}
    \hat{u}_t +  \hat{u}_{xxx} -  \hat{u}_{xxxxx}+ u^{\ast} u^{\ast}_x - uu_x&=  \res_{int}, \quad x\in (0,1)~ t \in (0,T), \\
    \hat{u}(x,0) &= \res_{tb}(x), \quad  x \in (0,1), \\
    \hat{u}(0,t) &= \res_{sb1}(0,t), \quad  t \in (0,T), \\
    \hat{u}(1,t) &= \res_{sb2}(1,t), \quad  t \in (0,T),\\
    \hat{u}_x(0,t) &= \res_{sb3}(0,t), \quad  t \in (0,T), \\
    \hat{u}_x(1,t) &= \res_{sb4}(1,t), \quad  t \in (0,T), \\
    \hat{u}_{xx}(1,t) &= \res_{sb5}(1,t), \quad  t \in (0,T).
    \end{aligned}    
\end{equation}
Here, we have denoted $\res_{int} = \res_{int,\theta^{\ast}}$ for notational convenience and analogously for the residuals $\res_{tb},\res_{sb}.$
Note that
$$
 u^{\ast} u^{\ast}_x - uu_x =  \hat{u}  \hat{u}_x  + u  \hat{u}_x +  \hat{u} u_x;  \,\, \hat{u}  \hat{u}_{xxx} = ( \hat{u} \hat{u}_{xx})_x - \frac12 ( \hat{u}^2_x)_x, \,\, \hat{u}  \hat{u}_{xxxxx} = ( \hat{u} \hat{u}_{xxxx})_x - ( \hat{u}_x \hat{u}_{xxx})_x+ \frac12 ( \hat{u}^2_{xx})_x.
$$
Multiplying both sides of the PDE \eqref{eq:herr} with $\hat{u}$, integrating over the domain and afterwards by parts yields,
\begin{equation}
\begin{split}
\frac{1}{2}\frac{d}{dt}\int_0^1 \hat{u}^2\,dx &= - \int_0^1 \hat{u}\hat{u}_{xxx}\,dx + \int_0^1 \hat{u}\hat{u}_{xxxxx}\,dx - \int_0^1\hat{u}(\hat{u}\hat{u}_x - u\hat{u}_x + u_x\hat{u})\,dx + \int_0^1 \hat{u}\res_{int}\,dx \\
&\leq -\left. \hat{u}_{xx}\hat{u} \right\vert_0^1 + \frac{1}{2} \left. (\hat{u}_x)^2 \right\vert_1 + \left. \hat{u}_{xxxx}\hat{u} \right\vert_0^1 - \left. \hat{u}_{xxx}\hat{u}_x \right\vert_0^1 + \frac{1}{2} \left. (\hat{u}_{xx})^2 \right\vert_1 \\
&- \int_0^1 \hat{u}^2\hat{u}_x\,dx - (\left. \frac{1}{2}\hat{u}^2u \right\vert_0^1 - \frac{1}{2}\int_0^1 \hat{u}^2u_x\,dx) - \int_0^1 \hat{u}^2u_x\,dx + \int_0^1 \hat{u}\res_{int}\,dx \\
&\leq \Vert \hat{u} \Vert_{C^4_x}(|\res_{sb1}| + |\res_{sb2}| + |\res_{sb3}| + |\res_{sb4}|) + \frac{1}{2} (\res_{sb3}^2 + \res_{sb5}^2) \\
&+ (\Vert u^* \Vert_{C_x^1} + \frac{1}{2}\Vert u \Vert_{C_x^1})  \int_0^1 \hat{u}^2\,dx + \frac{1}{2}\Vert u \Vert_{C_x^0}(\res_{sb1}^2 + \res_{sb2}^2) + \frac{1}{2}\int_0^1 \res_{int}^2 \,dx + \frac{1}{2}\int_0^1 \hat{u}^2 \,dx \\
&\leq (\Vert u^* \Vert_{C_t^0C^4_x} + \Vert u \Vert_{C_t^0C^4_x})(|\res_{sb1}| + |\res_{sb2}| + |\res_{sb3}| + |\res_{sb4}|) \\
&+ \frac{1}{2} (\res_{sb3}^2 + \res_{sb5}^2) + \frac{1}{2}\Vert u \Vert_{C_t^0C_x^0}(\res_{sb1}^2 + \res_{sb2}^2) + \frac{1}{2}\int_0^1 \res_{int}^2 \,dx \\
&+ (\Vert u^* \Vert_{C_t^0C_x^1} + \frac{1}{2}\Vert u \Vert_{C_t^0C_x^1} + \frac{1}{2})  \int_0^1 \hat{u}^2\,dx \\
&=: C_1(\sum\limits_{i=1}^5 |\res_{sbi}|) + C_2(\sum\limits_{i=1}^5 \res_{sbi}^2) + \frac{1}{2}\int_0^1 \res_{int}^2 \,dx + C_3\int_0^1 \hat{u}^2\,dx.
\end{split}
\end{equation}
Here the mixed norm is defined as $\Vert u \Vert_{C_t^mC_x^n} := \sum\limits_{0 \leq i \leq m, 0 \leq j \leq n}\Vert \frac{\partial^i}{\partial t^i}\frac{\partial^j}{\partial x^j} u \Vert_{C^0_{x, t}}$. Then integrating the above inequality over $[0,\bar{T}]$ for any $\bar{T} \leq T$, using Cauchy-Schwarz and Gronwall's inequalities, we obtain
	\begin{equation}
	\begin{aligned}
	&\int_0^1 \hat{u}(x, \bar{T})^2\,dx \\
	&\leq \int_0^1 \res_{tb}^2\,dx + 2C_1T^{1/2}\sum\limits_{i=1}^5(\int_0^T\res_{sbi}^2\,dt)^{1/2} + 2C_2\sum\limits_{i=1}^5(\int_0^T\res_{sbi}^2\,dt)  
	+ \int_0^T\int_0^1 \res_{int}^2 \,dxdt + 2C_3\int_0^{\bar{T}}\int_0^1 \hat{u}^2\,dxdt \\
	&\leq (1 + 2C_3Te^{2C_3T})  \big(\int_0^1 \res_{tb}^2\,dx + 10C_1T^{1/2}(\sum\limits_{i=1}^5\int_0^T \res_{sbi}^2\,dt)^{1/2} 
	+ 2C_2\sum\limits_{i=1}^5(\int_0^T\res_{sbi}^2\,dt) + \int_0^T\int_0^1 \res_{int}^2 \,dxdt \big) .
	\end{aligned}
	\label{eq:kawa_hat_eq3}
	\end{equation}
%We use Cauchy-Schwarz in the first inequality, and use Gronwall's inequality and the following estimate for non-genative $a_i$ in the second inequality.
%\begin{equation}
%(\sum\limits_{i=1}^n a_i)^p \leq n^p (\sum\limits_{i=1}^n a_i^p)
%\label{eq:kawa_ineq}
%\end{equation}
A further integration of \eqref{eq:kawa_hat_eq3} with respect to $\bar{T}$ results in
\begin{equation}
\begin{aligned}
\epsilon_G^2 := \int_0^T\int_0^1 \hat{u}(x, \bar{T})^2\,dxd\bar{T}
&\leq (T + 2C_3T^2e^{2C_3T})  \big(\int_0^1 \res_{tb}^2\,dx + 10C_1T^{1/2}(\sum\limits_{i=1}^5\int_0^T \res_{sbi}^2\,dt)^{1/2} \\
&\quad + 2C_2\sum\limits_{i=1}^5(\int_0^T\res_{sbi}^2\,dt) + \int_0^T\int_0^1 \res_{int}^2 \,dxdt \big),
\end{aligned}
\label{eq:kawa_hat_eq4}
\end{equation}
	with
	\begin{equation}
	\begin{aligned}
	C_1 =  \Vert u \Vert_{C_t^0C^4_x} + \Vert u^* \Vert_{C_t^0C^4_x}, \quad 
	C_2 = \frac{1}{2}\Vert u \Vert_{C_t^0C_x^0} + \frac{1}{2}, \quad
	C_3 = \Vert u^* \Vert_{C_t^0C_x^1} + \frac{1}{2}\Vert u \Vert_{C_t^0C_x^1} + \frac{1}{2}.
	\end{aligned}
\end{equation}
Eventually, applying the estimates \eqref{eq:hquad1}, \eqref{eq:hquad2}, \eqref{eq:hquad3} on the quadrature error, and definition of training errors \eqref{eq:hetrain}, yields the desired inequality \eqref{result_01}.
\end{proof}
\subsection{Numerical experiments}
\subsubsection{Implementation}
The PINNs algorithm \ref{alg:PINN} has been implemented within the PyTorch framework \cite{Pas} and the code can be downloaded from \textbf{https://github.com/baigm11/DispersivePinns}. As is well documented \cite{KAR1, KAR2, MM1}, the coding and implementation of PINNs is extremely simple. Only a few lines of Python code suffice for this purpose. All the numerical experiments were performed on a single GeForce GTX1080 GPU.

The PINNs algorithm has the following hyperparameters, the number of hidden layers $K-1$, the width of each hidden layer $d_k \coloneqq \bar{d}$ in \eqref{eq:ann1}, the specific activation function $A$, the parameter $\lambda$ in the loss function \eqref{eq:hlf}, the regularization parameter $\lambda_{reg}$ in the cumulative loss function \eqref{eq:lf2} and the specific gradient descent algorithm for approximating the optimization problem \eqref{eq:lf2}. We use the hyperbolic tangent $\tanh$ activation function, thus ensuring that all the smoothness hypothesis on the resulting neural networks, as required in all bounds on generalization error below, are satisfied. Moreover, we use the second-order LBFGS method \cite{Fle} as the optimizer. We follow the ensemble training procedure of \cite{LMR1} in order to choose the remaining hyperparameters. To this end, we consider a range of values, shown in Table \ref{tab:1}, for the number of hidden layers, the depth of each hidden layer, the parameter $\lambda$ and the regularization parameter $\lambda_{reg}$. For each configuration in the ensemble, the resulting model is retrained (in parallel) $n_\theta$ times with different random starting values of the trainable weights in the optimization algorithm and the one yielding the smallest value of the training loss is selected.
\begin{table}[htbp] 
    \centering
    \renewcommand{\arraystretch}{1.1} 
    
    \footnotesize{
        \begin{tabular}{c c c c c c c } 
            \toprule
            \bfseries   &\bfseries $K-1$  & \bfseries $d$  &\bfseries $q$ & \bfseries $\lambda_{reg}$  &\bfseries $\lambda$ &\bfseries $n_\theta$ \\ 
            \midrule
            \midrule
             KdV Equation & 4, 8  & 20, 24, 28 &2& 0& 0.1, 1, 10 & 5\\
            \midrule
             Kawahara Equation & 4, 8, 12  & 20, 24, 28, 32 &2& 0& 0.1, 1, 10 & 5\\
            \midrule
            CH Equation     & 4, 8  & 20, 24, 28 &2& 0& 0.1, 1, 10 & 5\\
            % \midrule 
            %  Schr\"odinger Equation & 4, 8  & 20, 24, 28 &2& 0& 0.1, 1, 10 & 5\\
            \midrule
             BO Equation, Single Soliton & 4, 8, 12  & 20, 24, 28, 32 &2& 0& 0.1, 1, 10 & 5\\
                \midrule 
             BO Equations, Double Soliton & 4, 8  & 20, 24, 28 &2& 0& 0.1, 1, 10 & 5\\

            \bottomrule
        \end{tabular}
    \caption{Hyperparameter configurations employed in the ensemble training of PINNs.}
        \label{tab:1}
    }
\end{table}
\subsubsection{KdV equation}
We set $\beta = 0$ in \eqref{eq:heat} to recover the KdV equation and consider the well-known numerical benchmarks of single and double soliton solutions, with exact solution formulas for both cases. 

For the single soliton, the exact solution is given by,
\begin{equation}
	u(x, t) = 9\text{sech}^2(\sqrt{3/4}(x - 3t)),
	\label{eq:kdv_single_exact}
\end{equation}
representing a single bump moving to the right with speed 3 with initial peak at $x = 0$. 
\begin{figure}[h!]
    \begin{subfigure}{.49\textwidth}
        \centering
        \includegraphics[width=1\linewidth]{{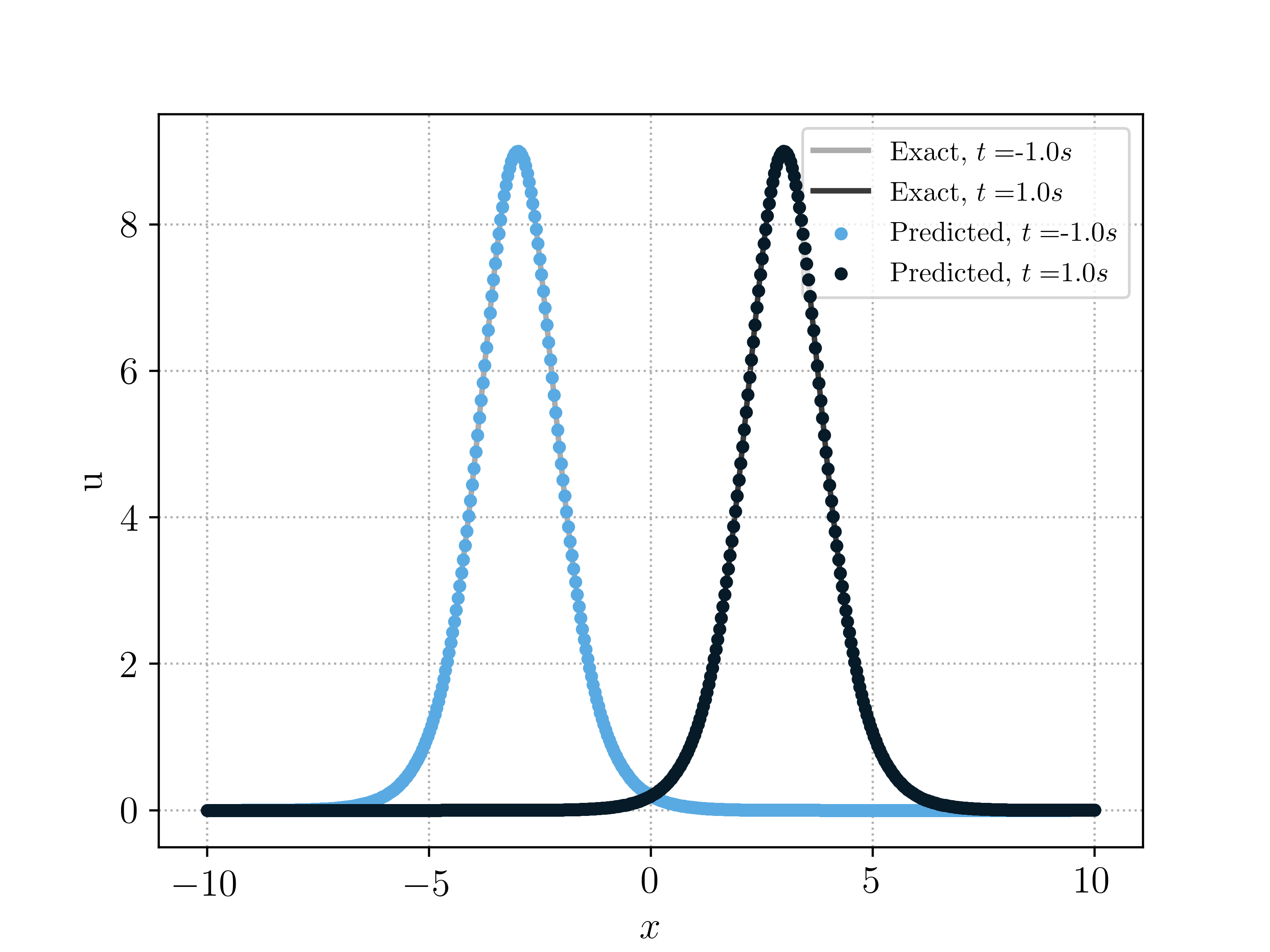}}
        \caption{Single soliton}
    \end{subfigure}
    \begin{subfigure}{.49\textwidth}
        \centering\
        \includegraphics[width=1\linewidth]{{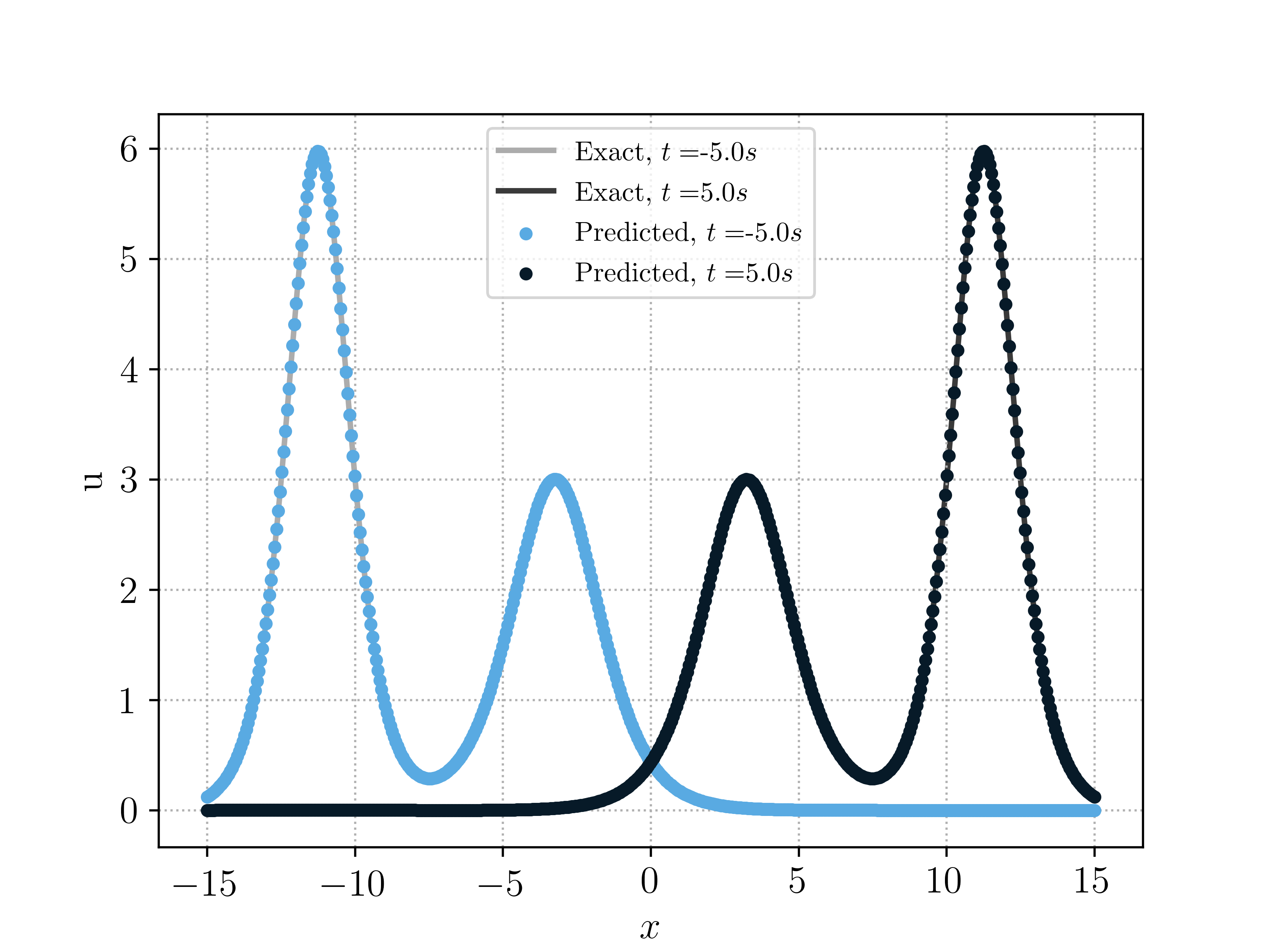}}
        \caption{Double soliton}
    \end{subfigure}
    \caption{The exact and PINN solutions of single and double soliton test case of KdV equation.}
    \label{fig:kdv}
\end{figure}

The ensemble training for the PINNs in this case resulted in the selection of hyperparameters, reported in Table \ref{tab:kdv}. We plot the exact solution and the approximate solution, computed with the PINNs algorithm \ref{alg:PINN} in figure \ref{fig:kdv} (left). As seen from this figure, PINNs provide a very accurate approximation for the single soliton. This is further verified in the extremely low generalization errors reported in Table \ref{tab:kdv}, showcasing the ability of PINNs to accurately approximate single solitons for the KdV equation.

\begin{table}[htbp] 
    \centering
    \renewcommand{\arraystretch}{1.1} 
    
    \footnotesize{
        \begin{tabular}{ c c c c c c c c c c} 
            \toprule
            \bfseries   &\bfseries $N_{int}$  & \bfseries $N_{sb}$& \bfseries $N_{tb}$  &\bfseries $K-1$ & \bfseries $d$   &\bfseries $\lambda$ &\bfseries $\er_T$&\bfseries $\er_G^r$ \\ 
            \midrule
            \midrule
            Single Soliton &2048   & 512 & 512 &4&20& 0.1 &0.000236& 0.00338\% \\
            \midrule 
            Double Soliton           &4096   & 1024 & 1024 &4&32& 1 &0.000713& 0.059\% \\

            \bottomrule
        \end{tabular}
    \caption{Best performing \textit{Neural Network} configurations for the single soliton and double soliton problem. Low-discrepancy Sobol points are used for every reported numerical example.}
    \label{tab:kdv}
    }
\end{table}

For the double soliton, the exact solution is given by,
\begin{equation}
u(x, t) = 6(b - a)\frac{b\textrm{csch}^2(\sqrt{b/2}(x - 2bt)) + a\textrm{sech}^2(\sqrt{a/2}(x - 2at))}{\big(\sqrt{a}\tan(\sqrt{a/2}(x - 2at)) - \sqrt{b}\tanh(\sqrt{b/2}(x - 2bt))\big)^2},
\label{eq:kdv_d}
\end{equation}
for any real numbers a and b where we have used $a = 0.5$ and $b = 1$ in the numerical experiment. \eqref{eq:kdv_d} represents two solitary waves that “collide” at $t = 0$ and separate for $t > 0$. For large $|t|$, $u(\cdot, t)$ is close to a sum of two solitary waves at different locations.

The ensemble training for the PINNs in this case resulted in the selection of hyperparameters, reported in Table \ref{tab:kdv} (bottom row). We plot the exact solution and the approximate solution, computed with the PINNs algorithm \ref{alg:PINN} in figure \ref{fig:kdv} (right). As seen from this figure, PINNs provide a very accurate approximation for the double soliton, which is further verified in the extremely low generalization errors reported in Table \ref{tab:kdv}. Thus, PINNs are able to approximate KdV solitons to very high accuracy. 

\begin{table}[htbp] 
    \centering
    \renewcommand{\arraystretch}{1.1} 
    
    \footnotesize{
        \begin{tabular}{ c c c c } 
            \toprule
            \bfseries max\_iters  &\bfseries training time$/s$ &\bfseries $\eps_T$ &\bfseries $\eps_G^r$ \\ 
            \midrule
            \midrule
            100  & 4   & 6.75e-02 &  1.84e-01  \\
            \midrule 
              500  & 21  & 2.41e-03   &  1.65e-03  \\
                \midrule 
              1000  & 44  & 7.34e-04  &   4.92e-04  \\
                \midrule 
              2000  & 61  & 2.36e-04  &  3.38e-05  \\
              
            \bottomrule
        \end{tabular}
    \caption{Results of different training iterations for single soliton case of KdV equation.}
		\label{tab:kdv_single_cp}
    }
\end{table}

Lastly, it is natural to investigate the computational cost of the PINNs in approximating the KdV solutions. The computational cost is dominated by the training i.e, the number of LBFGS iterations that are needed to minimize the training error. We report the training times (in seconds) for different number of iterations $max_{iters}$ for the single soliton test case in Table \ref{tab:kdv_single_cp} and for the double soliton test case in Table \ref{tab:kdv_double_cp}. From Table \ref{tab:kdv_single_cp}, we observe that the PINN for approximating single soliton is very fast to train, with a relative error of $1\%$ already reached with less than $500$ LBFGS iterations and a training time of approximately $20$ seconds. On the other hand, the PINN for the double soliton takes longer to train and attains an error of less than $1\%$, only with $2000$ iterations and a training time of less than $3$ minutes. This is not surprising as the double soliton has a significantly more complicated structure. Nevertheless, the overall cost is still very low, given the high-accuracy.

\begin{table}[htbp] 
    \centering
    \renewcommand{\arraystretch}{1.1} 
    
    \footnotesize{
        \begin{tabular}{ c c c c } 
            \toprule
            \bfseries max\_iters  &\bfseries training time$/s$ &\bfseries $\eps_T$ &\bfseries $\eps_G^r$ \\ 
            \midrule
            \midrule
         100   &     9  &   1.21e-01  &   4.82e-01 \\
         \midrule
         500   &    48  &   2.60e-02  &   1.30e-01 \\
         \midrule
        1000   &    95  &   7.00e-03  &   4.32e-02 \\
        \midrule
        2000   &   159  &   2.54e-03  &   1.11e-02 \\
        \midrule
        5000   &   436  &   7.89e-04   &  6.50e-04 \\
        \midrule
       10000   &   499  &   7.13e-04   &  5.88e-04 \\
              
            \bottomrule
        \end{tabular}
    \caption{Results of different training iterations for double soliton case of KdV equation.}
		\label{tab:kdv_double_cp}
    }
\end{table}
\subsubsection{Uncertainty Quantification.}
A natural advantage of PINNs lies in their ability to approximate statistical quantities for PDEs, for instance in the context of Uncertainty quantification (UQ) efficiently. 

To see this, we consider the following \emph{parameterized} initial-value problem for the KdV equations,
\begin{equation}
\begin{aligned}
    u_t + \gamma uu_x + \kappa u_{xxx} &= 0, \\
    u_0(x,\alpha,\beta,\gamma) &= \frac{\beta}{\gamma} + \frac{\alpha - \beta}{\gamma}\text{sech}^2\Big(\sqrt{\frac{\alpha - \beta}{12\kappa}}(x)\Big)
    \end{aligned}
    \label{eq:kdv_param}
\end{equation}
Here $\alpha,\beta,\gamma$ are scalar parameters that specify the initial location and amplitude for the \emph{soliton} initial data and $\kappa$ is a scalar parameter that measures the dispersivity of the medium. 

It turns out that this parametrized KdV equation \eqref{eq:kdv_param} admits an exact Soliton solution given by $u = u(x, t, \alpha, \beta, \gamma, \kappa): \Omega \subseteq \R^6 \rightarrow \R$, satisfying,
\begin{equation}
    u = \frac{\beta}{\gamma} + \frac{\alpha - \beta}{\gamma}\text{sech}^2\Big(\sqrt{\frac{\alpha - \beta}{12\kappa}}(x - (\beta + \frac{\alpha - \beta}{3})t)\Big)
    \label{eq:kdv_param_exact}
\end{equation}

We can readily see that the KdV single soliton solution \eqref{eq:kdv_single_exact} is recovered by setting $(\alpha, \beta, \gamma, \kappa) = (9, 0, 1, 1)$, solution \eqref{eq:kdv_param_exact} reduces to KdV single soliton solution \eqref{eq:kdv_single_exact}. 

We define an UQ problem by a stochastic perturbation of the initial soliton corresponding to the above choice of parameters. In particular, we choose $\alpha \sim \mathcal{U}(8.7, 9.3), \beta \sim \mathcal{U}(-0.4, 0.4), \gamma \sim \mathcal{U}(0.9, 1.1), \kappa \sim \mathcal{U}(0.9, 1.1)$ such that $\mathop{\mathbb{E}}(\alpha, \beta, \gamma, \kappa) = (9, 0, 1, 1)$. 

The UQ problem is approximated with PINNs by collocating the PINN residual resulting from \eqref{eq:kdv_param} on Sobol points, from the underlying $6$-dimensional domain. The initial and periodic boundary residuals are analogously computed. In figure \ref{fig:kdv_uq}, we plot the mean $\pm$ standard deviation, for both the initial data as well the uncertain solution at a later time and compare it with the exact solution, computed from \eqref{eq:kdv_param_exact}. We observe from this figure that the statistical quantities, computed with PINN, approximate the exact solution quite well. This qualitative observation is reinforced in the quantitative results presented in Table \ref{tab:kdv_uq}, where the generalization error, defined completely analogously to \eqref{eq:hegen} by integrating over the parameter space, is observed to be less than $0.5\%$ in approximately $5$ minutes of training time. This result highlights the ability of PINNs to approximate \emph{high-dimensional} parametric dispersive PDEs to high accuracy. 

\begin{figure}[h!]
\centering
    \includegraphics[width=8cm]{{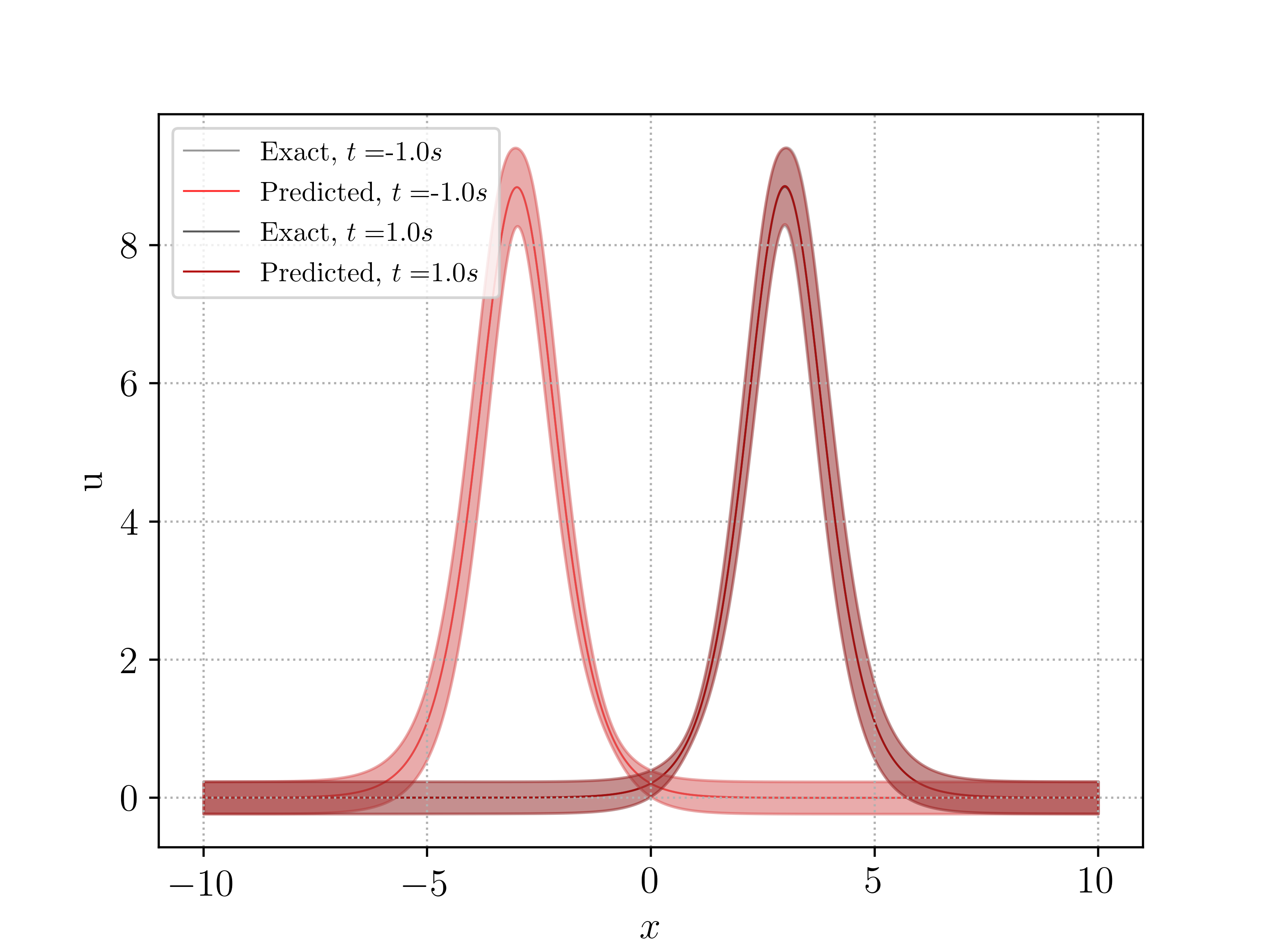}}
    \caption{The mean$\pm$std plot of exact and PINN solution of parametrized single soliton test case of parametrized KdV equation \eqref{eq:kdv_param}.}
    \label{fig:kdv_uq}
\end{figure}

\begin{table}[htbp] 
    \centering
    \renewcommand{\arraystretch}{1.1} 
    
    \footnotesize{
        \begin{tabular}{ c c c c c c c c c c} 
            \toprule
            \bfseries   &\bfseries $N_{int}$  & \bfseries $N_{sb}$& \bfseries $N_{tb}$  &\bfseries $K-1$ & \bfseries $d$  &\bfseries $\lambda$ &\bfseries $\er_T$&\bfseries $\er_G^r$ \\ 
            \midrule
            \midrule
            Single Soliton UQ &16384   & 4096 & 4096 &4&24& 0.1 &0.00351& 0.442\% \\
            \bottomrule
        \end{tabular}
    \caption{Best performing \textit{Neural Network} configurations for the single soliton UQ test case for the parametrized KdV equations \eqref{eq:kdv_param}. Low-discrepancy Sobol points are used for every reported numerical example.}
        \label{tab:kdv_uq}
    }
\end{table}

\subsubsection{Kawahara equation}
Following \cite{Car, koley1, koley2}, we consider a Kawahara-type equation which differs from Kawahara equation \eqref{eq:heat} in a first-order term $u_x$,
\begin{equation}
    u_t + u_x + uu_x + u_{xxx} -u_{xxxxx} = 0.
    \label{eq:kawa_mod}
\end{equation}
This first-order term $u_x$ is a linear perturbation and we can easily derive a similar a posteriori bound on generalization error, as for \eqref{eq:heat}. As no exact solution formulas for the double soliton test case are known for the Kawahara equation \eqref{eq:kawa_mod}, we focus on the single soliton case, with exact solutions given by \begin{equation}
\label{eq:kdvs}
u(x, t) = \frac{105}{169}\textrm{sech}^4\Big(\frac{1}{2\sqrt{13}}(x - \frac{205}{169}t - x_0)\Big).
\end{equation}
This represents a single bump moving to the right with speed $\frac{205}{169}$ with initial peak at $x = x_0$. The ensemble training selected PINNs with hyperparameters, given in Table \ref{tab:Kawa}. The resulting PINN approximation, together with the exact solution is plotted in figure \ref{fig:Kawa} and shows that the trained PINN approximates the exact solution with very high accuracy. This is further verified in the extremely low generalization error of $0.1\%$, reported in Table \ref{tab:Kawa}. 
\begin{table}[htbp] 
    \centering
    \renewcommand{\arraystretch}{1.1} 
    
    \footnotesize{
        \begin{tabular}{ c c c c c c c c c c} 
            \toprule
            \bfseries   &\bfseries $N_{int}$  & \bfseries $N_{sb}$& \bfseries $N_{tb}$  &\bfseries $K-1$ & \bfseries $d$  &\bfseries $\lambda$ &\bfseries $\er_T$&\bfseries $\er_G^r$ \\ 
            \midrule
            \midrule
            Single Soliton &2048   & 512 & 512 &4&24& 10 &0.000321& 0.101\% \\
            \bottomrule
        \end{tabular}
    \caption{Best performing \textit{Neural Network} configurations for the single soliton test case for the Kawahara equations \eqref{eq:kawa_mod}. Low-discrepancy Sobol points are used for every reported numerical example.}
        \label{tab:Kawa}
    }
\end{table}
In Table \ref{tab:kawa_single_cp}, we present training times (in seconds) for the PINNs algorithm for the Kawahara equation \ref{eq:kawa_mod}. We observe from this Table that an error of less than $1 \%$ percent is achieved in approximately $6-7$ minutes. Given the fact that the Kawahara equation requires the evaluation of $5$-th order derivatives, it is expected that each training iteration is significantly more expensive than that of the KdV equation. Table \ref{tab:kawa_single_cp} shows that this is indeed the case and partly explains the higher computational cost for the PINN to approximate the Kawahara equation. Nevertheless, the total cost is still considerably smaller than those reported for the finite difference scheme in \cite{koley1,koley2}. As an examples, to achieve $1 \%$ error, it takes approximately $15-18$ minutes for the dissipative finite-difference scheme presented in \cite{koley2}.

\begin{figure}[h!]
\centering
    \includegraphics[width=8cm]{{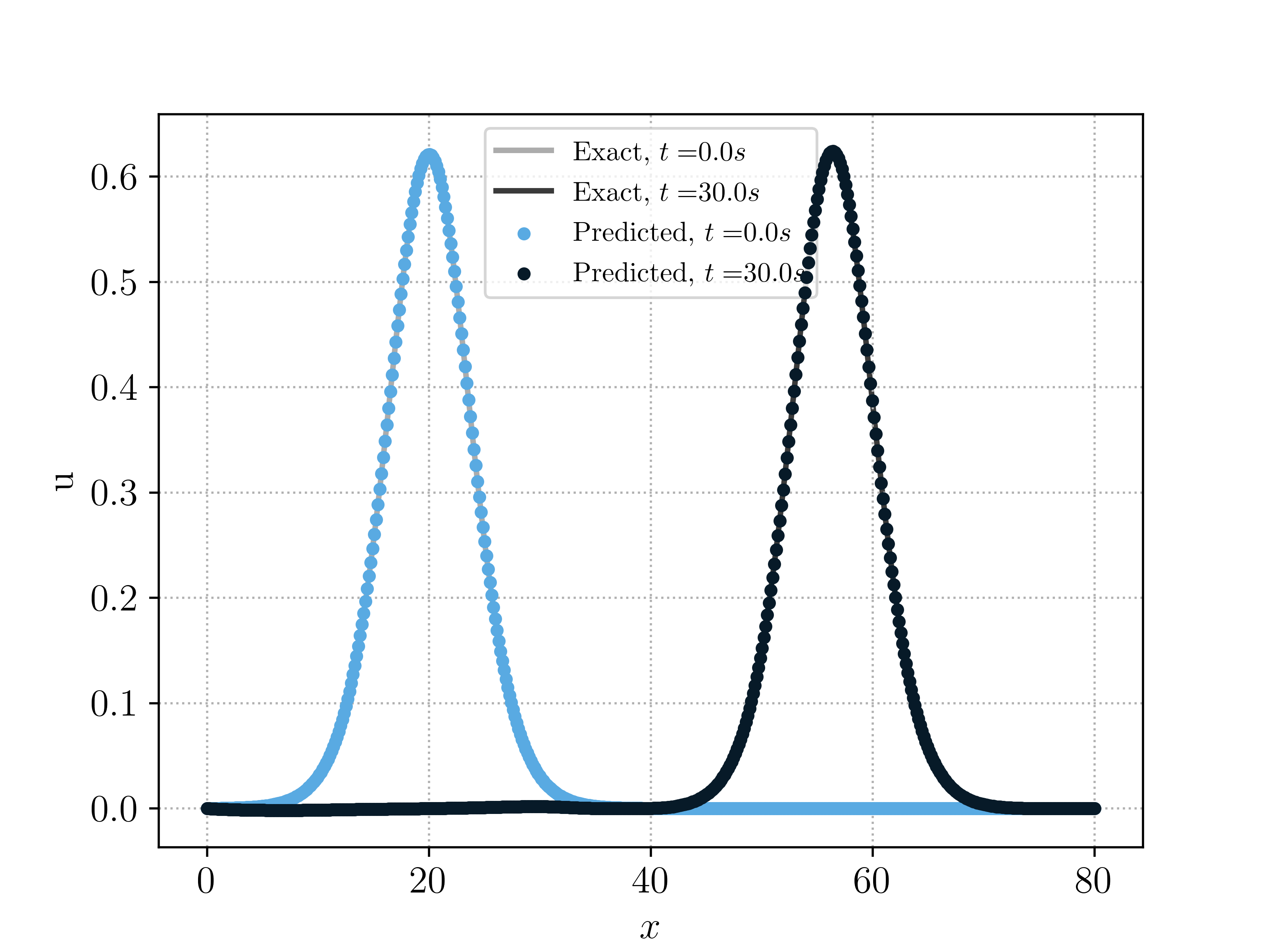}}
    \caption{The exact and PINN solution of single soliton test case of Kawahara equation \eqref{eq:kawa_mod}.}
    \label{fig:Kawa}
\end{figure}

\begin{table}[htbp] 
    \centering
    \renewcommand{\arraystretch}{1.1} 
    
    \footnotesize{
        \begin{tabular}{ c c c c } 
            \toprule
            \bfseries max\_iters  &\bfseries training time$/s$ &\bfseries $\eps_T$ &\bfseries $\eps_G^r$ \\ 
            \midrule
            \midrule
         100    &   25  &   8.89e-02   &  9.70e-01 \\
         \midrule
         500   &   127   &  4.76e-02   &  7.86e-01 \\
         \midrule
        1000   &   249   &  8.40e-03  &   1.89e-01 \\
        \midrule
        2000   &   466  &   1.06e-03   &  5.88e-03 \\
        \midrule
        5000   &   964  &   3.21e-04   &  1.01e-03 \\
              
            \bottomrule
        \end{tabular}
    \caption{Results of different training iterations for single soliton case of Kawahara equation.}
		\label{tab:kawa_single_cp}
    }
\end{table}

\section{Camassa-Holm equation}
\label{sec:4}
\subsection{The underlying PDE}
In this section, we consider the following initial-boundary value problem for the one-dimensional Camassa-Holm equation on a compact interval
\begin{equation}
\label{eq:vscl}
\begin{aligned}
u_t - u_{txx}+ 3uu_x + 2 \kappa u_x&= 2 u_x u_{xx} + u u_{xxx}, \quad \forall x\in (0,1),~t \in [0,T], \\
u(x,0) &= u_0(x), \quad \forall x \in (0,1), \\
u(0,t) &= u_{xx}(0,t) = u(1,t) = u_{xx}(1,t)=0, \quad \forall t \in [0,T].
\end{aligned}
\end{equation}
Here, $\kappa$ is a real constant. This equation models the unidirectional propagation 
of shallow water waves over a flat bottom, with $u$ representing the fluid velocity.
A key feature of the above equation is that it is completely integrable for all values of $\kappa$. A special case of \eqref{eq:vscl}, corresponding to $\kappa=0$, plays an important role in the modeling of nonlinear dispersive waves in hyperelastic rods \cite{camassa}. Regarding the existence of solutions, we report the following result which is a slight modification of the results of \cite{kwek},
\begin{theorem}
\label{thm:CH}
Let $\mathcal{X}:= \lbrace u \in H^4(0,1): u(0)=u_{xx}(0)=u(1)=u_{xx}(1)=0 \rbrace$. Then for every $u_0 \in \mathcal{X}$, the problem \eqref{eq:vscl} has
a unique solution  
$$
u \in C([0,T); \mathcal{X}) \cap C^1([0,T); H^1_0(0,1)),
$$
for some $T>0$. In addition, $u_{xx} \in C^1([0,T); H^1_0(0,1))$, and $u$ depends continuously on $u_0$ in the $H^4$-norm.
\end{theorem}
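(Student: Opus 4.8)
The plan is to recast the quasilinear problem \eqref{eq:vscl} as a transport equation carrying a nonlocal, order-zero forcing, and then to run an iteration-plus-energy argument in the scale of Sobolev spaces adapted to the boundary conditions. Writing $\Lambda := 1-\partial_x^2$ and introducing the momentum variable $y := \Lambda u = u - u_{xx}$, equation \eqref{eq:vscl} is equivalent to $y_t + u y_x + 2u_x y + 2\kappa u_x = 0$. Inverting $\Lambda$ with the Dirichlet conditions $u(0,t)=u(1,t)=0$ (which, together with $u_{xx}(0,t)=u_{xx}(1,t)=0$, are exactly the conditions encoded in $\mathcal{X}$ and amount to $y(0,t)=y(1,t)=0$), a direct computation shows this is in turn equivalent to the scalar nonlocal equation
\begin{equation}
u_t + u u_x + \partial_x \Lambda^{-1}\Big(u^2 + \tfrac12 u_x^2 + 2\kappa u\Big) = 0, \qquad u(\cdot,0)=u_0.
\label{eq:ch_nonlocal}
\end{equation}
The first gain of this reformulation is that the apparent third-order nonlinearity $u u_{xxx}$ has disappeared: since $\Lambda^{-1}$ is smoothing of order two and $\partial_x$ loses one, the map $F(u) := -\partial_x\Lambda^{-1}(u^2 + \tfrac12 u_x^2 + 2\kappa u)$ is, by the Sobolev algebra property of $H^s(0,1)$ for $s>1/2$ together with elliptic regularity for $\Lambda$, a locally Lipschitz map of $H^4(0,1)$ into itself with $\|F(u)\|_{H^4}\lesssim \|u\|_{H^4} + \|u\|_{H^4}^2$. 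Thus the only genuinely quasilinear feature left is the transport term $u u_x$.

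I would then construct the solution by Picard iteration on \eqref{eq:ch_nonlocal} (alternatively, Kato's theory of quasilinear evolution equations applies to the same structure). Define $u^{n+1}$ as the solution of the linear transport problem $u^{n+1}_t + u^n u^{n+1}_x = F(u^n)$, $u^{n+1}(\cdot,0)=u_0$, whose solvability and regularity follow from the method of characteristics; note that the characteristics issuing from $x\in(0,1)$ stay in $(0,1)$ because $u^n$ vanishes at the endpoints, so the boundary points are themselves characteristics. The core is a uniform-in-$n$ a priori bound. Testing the top-order part of the differentiated equation against $\partial_x^4 u^{n+1}$, the dangerous contribution $\int_0^1 u^n\,\partial_x(\partial_x^4 u^{n+1})\,\partial_x^4 u^{n+1}\,dx$ integrates by parts to $-\tfrac12\int_0^1 u^n_x (\partial_x^4 u^{n+1})^2\,dx$, whose modulus is controlled by $\|u^n_x\|_{L^\infty}\|u^{n+1}\|_{H^4}^2$; crucially, the boundary terms produced by this integration by parts vanish because of the vanishing of $u$ and of its even-order derivatives at $x=0,1$ encoded in $\mathcal{X}$. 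Combined with the bound on $F$, this yields a differential inequality $\tfrac{d}{dt}\|u^{n+1}\|_{H^4}^2 \lesssim (1+\|u^n\|_{H^4})\,\|u^{n+1}\|_{H^4}^2 + \|u^n\|_{H^4}^4$, from which a Gronwall/comparison argument produces a time $T>0$ and radius $R$, both depending only on $\|u_0\|_{H^4}$, with $\sup_{[0,T]}\|u^n\|_{H^4}\le R$ for all $n$. One then shows $(u^n)$ is Cauchy in $C([0,T];H^1_0)$ by an energy estimate on the differences $u^{n+1}-u^n$, where no derivative is lost because the difference solves a transport equation with lower-order forcing; passing to the limit gives a solution of \eqref{eq:ch_nonlocal}, which a standard weak-lower-semicontinuity and interpolation argument places in $C([0,T);\mathcal{X})\cap C^1([0,T);H^1_0)$ with $u_{xx}\in C^1([0,T);H^1_0)$, as claimed.

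Uniqueness follows from the same difference estimate applied to two solutions with identical data, giving $\|u-\tilde u\|_{H^1}\equiv 0$ by Gronwall. The remaining, and I expect most delicate, point is continuous dependence on $u_0$ in the full $H^4$ norm rather than merely in a weaker norm: the low-norm stability estimate is not sufficient here. For this I would invoke the Bona--Smith technique, regularising the data by truncating its Dirichlet sine expansion, combining the uniform $H^4$ bounds with difference estimates one derivative below the top level, and interpolating to upgrade convergence to the $H^4$ topology, thereby obtaining continuity of the data-to-solution map into $C([0,T);\mathcal{X})$.

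The two places where I expect the real work to concentrate are (i) verifying that the boundary conditions defining $\mathcal{X}$ are genuinely propagated by the flow and make every boundary term in the energy identities vanish --- here the sine-series description of $\mathcal{X}$, in which each element automatically has vanishing even-order derivatives at the endpoints, is the cleanest bookkeeping device and also clarifies the role of the $\kappa u_x$ term at the boundary --- and (ii) closing the top-order energy estimate without losing a derivative in the transport term, which is precisely what the reformulation \eqref{eq:ch_nonlocal} is designed to make tractable.
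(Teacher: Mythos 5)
The paper does not prove this statement at all: Theorem \ref{thm:CH} is quoted as a known result, ``a slight modification of the results of \cite{kwek}'', so there is no internal proof to compare yours against. Your outline --- rewriting \eqref{eq:vscl} in momentum form $y_t+uy_x+2u_xy+2\kappa u_x=0$ with $y=\Lambda u$, $\Lambda=1-\partial_x^2$, inverting $\Lambda$ under the Dirichlet conditions to obtain the nonlocal transport equation, then running a Kato-type quasilinear iteration with top-order energy estimates and a Bona--Smith regularisation for continuous dependence in $H^4$ --- is exactly the strategy of that line of literature, so you have reconstructed the intended proof rather than found a different one. The reformulation, the mapping property $F:H^4\to H^4$, the characteristics argument, and the difference estimate one derivative below top order are all sound as sketched.

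The one genuine gap is the point you defer to item (i) and treat as bookkeeping: propagation of the boundary conditions defining $\mathcal{X}$ when $\kappa\neq 0$. Since $u$ vanishes at the endpoints, $x=0$ and $x=1$ are stationary characteristics of the $y$-equation, and if $y(0,0)=0$ then $y_t(0,t)=-2\kappa u_x(0,t)$; equivalently, a direct computation from $\Lambda u_t=-3uu_x-2\kappa u_x+2u_xu_{xx}+uu_{xxx}$ with $u_t(0,t)=u(0,t)=u_{xx}(0,t)=0$ gives $\partial_t\bigl(u_{xx}(0,t)\bigr)=2\kappa u_x(0,t)$. So for $\kappa\neq 0$ the constraint $u_{xx}=0$ at the endpoints is generically destroyed by the flow unless $u_x$ also vanishes there, and your construction only delivers a solution of the nonlocal equation with $u(0,t)=u(1,t)=0$; membership of $u(\cdot,t)$ in $\mathcal{X}$ for $t>0$ does not follow. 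The sine-series remark does not repair this --- it describes the trace structure of elements of $\mathcal{X}$, not invariance of $\mathcal{X}$ under the evolution. This compatibility issue is absent when $\kappa=0$ and is precisely the non-trivial content of the ``slight modification'' the paper alludes to; any complete proof of the theorem as stated must either confront it directly or restrict the admissible data.
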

\subsection{PINNs}
To specify the PINNs algorithm \ref{alg:PINN} in this case, we start by choosing the training set, exactly as in section \ref{sec:train}. The following residuals are chosen,
\begin{itemize}
	\item Interior Residual given by,
	\begin{equation}
	\begin{aligned}
	\label{eq:hres11}
	\res_{int,\theta}(x,t)& := \partial_t u_{\theta}(x,t) - \partial_{txx} u_{\theta}(x,t) +3 u_{\theta}(x,t) (u_{\theta})_x(x,t) + 2 \kappa (u_{\theta})_x(x,t) \\
	& \qquad - 2 (u_{\theta})_{x}(x,t)(u_{\theta})_{xx}(x,t) - (u_{\theta})(x,t) (u_{\theta})_{xxx}(x,t).
	\end{aligned}
	\end{equation}
	Note that the residual is well defined and $\res_{int,\theta} \in C([0,T]\times [0,1])$ for every $\theta \in \Theta$. 
	\item Spatial boundary Residual given by,
	\begin{equation}
	\begin{aligned}
	\label{eq:hres21}
	\res_{sb1,\theta}(0,t) & := u_{\theta}(0,t), \quad \forall  t \in (0,T), \\
	\res_{sb2,\theta}(1,t) & := u_{\theta}(1,t), \quad \forall  t \in (0,T),\\
	\res_{sb3,\theta}(0,t) & := (u_{\theta})_{xx}(0,t), \quad \forall  t \in (0,T),\\
	\res_{sb4,\theta}(1,t) & := (u_{\theta})_{xx}(1,t), \quad \forall  t \in (0,T).
	\end{aligned}
	\end{equation}
	Given the fact that the neural network is smooth, this residual is well defined. 
	\item Temporal boundary Residual given by,
	\begin{equation}
	\label{eq:hres31}
	\res_{tb,\theta}(x):= \left[\bigg(u_\theta(x, 0) - u_0(x)\bigg)^2 + \bigg((u_\theta)_x(x, 0) - (u_0)_x(x)\bigg)^2\right]^{1/2}, \quad \forall x \in (0,1). 
	\end{equation}
	Again this quantity is well-defined and $\res_{tb,\theta} \in C^2((0,1))$ as both the initial data and the neural network are smooth. Notice that this temporal boundary residual differs from those of KdV and Kawahara equation by an additional spatial derivative term. This term stems from the mixed derivative $u_{txx}$ in \eqref{eq:vscl} which will come clear in the derivation of \eqref{eq:CH_hat_eq2} and \eqref{eq:CH_hat_eq3}.
\end{itemize}
These lead to the following loss function for training the PINN for approximating the Camassa-Holm equation \eqref{eq:vscl},
\begin{equation}
    \label{eq:blf}
    J(\theta):= \sum\limits_{n=1}^{N_{tb}} w^{tb}_n|\res_{tb,\theta}(x_n)|^2 + \sum\limits_{n=1}^{N_{sb}} \sum\limits_{i=1}^{4} w^{sb}_n|\res_{sbi,\theta}(t_n)|^2  + \lambda \sum\limits_{n=1}^{N_{int}} w^{int}_n|\res_{int,\theta}(x_n,t_n)|^2 .
\end{equation}
Here $w^{tb}_n$ are the $N_{tb}$ quadrature weights corresponding to the temporal boundary training points $\train_{tb}$, $w^{sb}_n$ are the $N_{sb}$ quadrature weights corresponding to the spatial boundary training points $\train_{sb}$ and $w^{int}_n$ are the $N_{int}$ quadrature weights corresponding to the interior training points $\train_{int}$. Furthermore, $\lambda$ is a hyperparameter for balancing the residuals, on account of the PDE and the initial and boundary data, respectively. 
\subsection{Bounds on the Generalization Error.}
As in the case of the KdV-Kawahara equation, we will leverage the stability of classical solutions of the Camassa-Holm equation \eqref{eq:vscl} in order to bound the PINN generalization error,
\begin{equation}
    \label{eq:begen}
    \er_{G}:= \left(\int\limits_0^T \int\limits_0^1 |u(x,t) - u^{\ast}(x,t)|^2 dx dt \right)^{\frac{1}{2}},
\end{equation}
in terms of the \emph{training error},
\begin{equation}
    \label{eq:betrain}
    \begin{aligned}
    \er^2_{T}&:=
    \lambda\underbrace{\sum\limits_{n=1}^{N_{int}} w^{int}_n|\res_{int,\theta^{\ast}}(x_n,t_n)|^2}_{(\er_T^{int})^2} +\underbrace{\sum\limits_{n=1}^{N_{tb}} + w^{tb}_n|\res_{tb,\theta^{\ast}}(x_n)|^2}_{(\er_T^{tb})^2} + \underbrace{\sum\limits_{n=1}^{N_{sb}} \sum\limits_{i=1}^{4} w^{sb}_n|\res_{sbi,\theta^{\ast}}(t_n)|^2}_{(\er_T^{sb})^2},
\end{aligned}
\end{equation}
readily computed from the training loss \eqref{eq:blf} \emph{a posteriori}. We have the following estimate,
\begin{theorem}
\label{thm:burg}
Let $\kappa > 0$ and let $u \in C^3((0,T) \times (0,1))$ be the unique classical solution of Casamma-Holm equation \eqref{eq:vscl}. Let $u^{\ast} = u_{\theta^{\ast}}$ be the PINN, generated by algorithm \ref{alg:PINN}, with loss function \eqref{eq:blf}. Then, the generalization error \eqref{eq:begen} is bounded by,
	
\begin{equation}
\label{001}
		\begin{aligned}
			\epsilon_G &\leq C_1\big(\epsilon_T^{tb} + \epsilon_T^{int} + C_2(\epsilon_T^{sb}) + C_3(\epsilon_T^{sb})^{1/2} \\
			&+ (C_{quad}^{tb})^{1/2} N_{tb}^{-\alpha_{tb} / 2} + (C_{quad}^{int})^{1/2} N_{int}^{-\alpha_{int} / 2} + C_2(C_{quad}^{sb})^{1/2} N_{sb}^{-\alpha_{sb} / 2} + C_3(C_{quad}^{sb})^{1/4} N_{sb}^{-\alpha_{sb} / 4}\big),
		\end{aligned}
	\end{equation}
	where 
	\begin{equation}
		\begin{aligned}
			C_1 &= \sqrt{T + 2C_4T^2e^{2C_4T}}, \\
			C_2 &= \sqrt{2(|\kappa| + \Vert u^* \Vert_{C_t^0C_x^2} + \Vert u \Vert_{C_t^0C_x^2})}, \\
			C_3 &= 2T^{1/4}\sqrt{2\Vert u^* \Vert_{C_t^1C_x^1} + 2\Vert u \Vert_{C_t^1C_x^1} + 2\Vert u \Vert_{C_t^0C_x^1}(\Vert u^* \Vert_{C_t^0C_x^1}+ \Vert u \Vert_{C_t^0C_x^1})}, \\
			C_4 &= \frac{1}{2} + 3\Vert u^* \Vert_{C_t^0C_x^1} + \frac{3}{2}\Vert u \Vert_{C_t^0C_x^3},
		\end{aligned}
	\end{equation}	
%%%%%%%%%%%%%%%%%%%%%%%%%%%%%%%%%%%%%%%%%%%%
% 	\begin{equation}
% 	\begin{aligned}
% 	\epsilon_G &\leq C_1\Big(\epsilon_T^{tb} + \epsilon_T^{int} + \kappa\epsilon_T^{sb} + C_2(\epsilon_T^{sb})^{1/2} + \epsilon_T^{\mathcal{R}} \\
% 	&+ (C_{quad}^{tb})^{1/2} N_{tb}^{-\alpha_{tb} / 2} + (C_{quad}^{int})^{1/2} N_{int}^{-\alpha_{int} / 2} + \kappa(C_{quad}^{sb})^{1/2} N_{sb}^{-\alpha_{sb} / 2} + C_2(C_{quad}^{sb})^{1/4} N_{sb}^{-\alpha_{sb} / 4} + (C_{quad}^{\mathcal{R}})^{1/2} N_{sb}^{-\alpha_{sb} / 2}\Big)
% 	\end{aligned}
% 	\end{equation}
% 	where 
% 	\begin{equation}
% 	\begin{aligned}
% 	C_1 &= \sqrt{T + C_3T^2e^{C_3T}} \\
% 	C_2 &= \sqrt{\| u_x \|_{C_t^0C_x^0} \Big(\|u^{\ast}_x  \|_{C_t^0C_x^0} + \| u_x \|_{C_t^0C_x^0}\Big) + 2\| u^{\ast}_{tx}\|_{C^0_t C^0_x} + 2\| u_{tx}\|_{C^0_t C^0_x}} \\
% 	C_3 &= \frac{3}{2}(\|u_x \|_{C_t^0 L^{\infty}_x} + \|u_{xxx} \|_{C_t^0 L^{\infty}_x}) \\
% 	\mathcal{R} &:= \res^3_{sb2} - \res^3_{sb1} + \res^2_{sb2}\res_{sb4} - \res^2_{sb1}\res_{sb3}
% 	\end{aligned}
% 	\end{equation}
%%%%%%%%%%%%%%%%%%%%%%%%%%%%%%%%%%%%%%%%%%%%
and $C^{tb}_{quad} = C^{tb}_{qaud}\left(\|\res_{tb,\theta^{\ast}}\|_{C^2}\right)$, $C^{int}_{quad} = C^{int}_{qaud}\left(\|\res_{int,\theta^{\ast}}\|_{C^0}\right)$, and $C^{sb}_{quad} = C^{sb}_{qaud}\left(\|\res_{sb,\theta^{\ast}}\|_{C^{1}}\right)$ are the constants associated with the quadrature errors are constants are appear in the bounds on quadrature error \eqref{eq:hquad1}-\eqref{eq:hquad3}. 
\end{theorem}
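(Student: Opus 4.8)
The plan is to follow the energy-method template of Theorem~\ref{thm:heat}, adapted to the $H^1$-type structure that the mixed derivative $u_{txx}$ forces. First I would set $\hat{u} := u^* - u$ and derive the error equation from \eqref{eq:vscl}: the residuals $\res_{int}, \res_{tb}, \res_{sb1},\dots,\res_{sb4}$ from \eqref{eq:hres11}--\eqref{eq:hres31} appear as the forcing, the initial condition, and the spatial boundary data, respectively. Expanding each nonlinear difference in terms of $\hat{u}$ and the background solution $u$ (e.g. $u^* u^*_x - u u_x = \hat{u}\hat{u}_x + u\hat{u}_x + \hat{u}u_x$, and analogously for $u^*_x u^*_{xx} - u_x u_{xx}$ and $u^* u^*_{xxx} - u u_{xxx}$) isolates the quadratic-in-$\hat{u}$ terms that must be absorbed into the energy and the linear-in-$\hat{u}$ terms that carry factors of $u$, $u^*$ and their derivatives up to third order.

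The decisive step is the energy identity. Multiplying the error equation by $\hat{u}$ and integrating over $(0,1)$, the two leading terms combine, after one integration by parts, as $\int_0^1 \hat{u}(\hat{u}_t - \hat{u}_{txx})\,dx = \tfrac12\frac{d}{dt}\int_0^1(\hat{u}^2 + \hat{u}_x^2)\,dx - [\hat{u}\hat{u}_{tx}]_0^1$. This is precisely why the controlled quantity is the $H^1$ energy $\int_0^1(\hat{u}^2 + \hat{u}_x^2)\,dx$ rather than the plain $L^2$ energy of the KdV--Kawahara case, and it explains the extra $(u_\theta)_x$ term built into the temporal boundary residual \eqref{eq:hres31}. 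The remaining terms are then processed by repeated integration by parts: the third-order contribution $\int_0^1 \hat{u}\,(u^* u^*_{xxx} - u u_{xxx})\,dx$ is the most delicate, since naively it involves $\hat{u}_{xxx}$, but integrating by parts transfers derivatives onto the smooth background factors so that everything is majorized by the $C_x^3$ norm of $u$, the $C_x^1$ norm of $u^*$, and the $H^1$ energy. All surface contributions at $x=0,1$ are rewritten using the Dirichlet and second-derivative boundary residuals $\res_{sb1},\dots,\res_{sb4}$ and bounded by $|\res_{sbi}|$ and $\res_{sbi}^2$; the emergence of the constants $C_2, C_3, C_4$ should track exactly these manipulations.

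Once the differential inequality $\frac{d}{dt}\int_0^1(\hat{u}^2+\hat{u}_x^2)\,dx \le C\big(\sum_i|\res_{sbi}| + \sum_i \res_{sbi}^2 + \int_0^1\res_{int}^2\,dx\big) + 2C_4\int_0^1(\hat{u}^2+\hat{u}_x^2)\,dx$ is in hand, I would integrate in $t$ over $[0,\bar{T}]$, apply Cauchy--Schwarz to the $\sum_i|\res_{sbi}|$ term to produce the $(\epsilon_T^{sb})^{1/2}$-type contribution, and invoke Gronwall to bound the $H^1$ energy, from which $\int_0^1\hat{u}(x,\bar{T})^2\,dx$ follows by discarding the nonnegative $\hat{u}_x^2$ part. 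A final integration in $\bar{T}$ gives $\epsilon_G^2$, and substituting the quadrature estimates \eqref{eq:hquad1}--\eqref{eq:hquad3}, which convert the continuous residual integrals into the computable training errors \eqref{eq:betrain} plus the $N^{-\alpha}$ remainders, yields \eqref{001}. The main obstacle is the bookkeeping of the nonlinear and mixed-derivative boundary terms: ensuring that every integration by parts on the $u u_{xxx}$ and $u_x u_{xx}$ differences closes against the $H^1$ energy so that Gronwall applies, and that no uncontrolled boundary flux of $\hat{u}$ survives, given that only $u$ and $u_{xx}$ — not $u_x$ — are prescribed at the endpoints.
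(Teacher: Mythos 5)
Your proposal follows essentially the same route as the paper's proof: the same error equation and nonlinear decompositions, the same $H^1$ energy $\int_0^1(\hat{u}^2+\hat{u}_x^2)\,dx$ arising from $\int_0^1\hat{u}(\hat{u}_t-\hat{u}_{txz{x}})\,dx$ after one integration by parts, the same transfer of derivatives onto the background solution to reach constants depending on $\Vert u\Vert_{C_t^0C_x^3}$ and $\Vert u^*\Vert_{C_t^0C_x^1}$, and the same Cauchy--Schwarz/Gronwall/quadrature finish. The boundary-flux worry you flag at the end is handled in the paper exactly as you anticipate: the surviving term $\hat{u}u_x\hat{u}_x\vert_0^1$ carries a factor $\hat{u}$ at the endpoints, which is the Dirichlet residual, so it is absorbed into the $|\res_{sb1}|+|\res_{sb2}|$ contribution.
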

\begin{proof}
Let $\hat{u} = u^{\ast}-u$ be the error with the PINN. From the PDE \eqref{eq:vscl} and the definition of the interior residual \eqref{eq:hres11}, we have,
\begin{align}
\label{eq:CH_hat_eq}
\hat{u}_t - \hat{u}_{txx} + 2 \kappa \hat{u}_x + 3 (u^{\ast}u^{\ast}_x -uu_x)= 2 u^{\ast}_{x}u^{\ast}_{xx} - 2u_xu_{xx} + u^{\ast}u^{\ast}_{xxx} -u u_{xxx} + \res_{int}.
\end{align}
Observe also that
\begin{equation}
    \begin{aligned}
    \label{eq:CH_id}
    u^{\ast} u^{\ast}_x - uu_x  =  \hat{u}  \hat{u}_x  + u  \hat{u}_x +  \hat{u} u_x; & \quad  u^{\ast}_x u^{\ast}_{xx} - u_xu_{xx} =  \hat{u}_x  \hat{u}_{xx}  + u_x  \hat{u}_{xx} +  \hat{u}_x u_{xx}, \\
    u^{\ast} u^{\ast}_{xxx} - uu_{xxx}  &=  \hat{u}  \hat{u}_{xxx} + u  \hat{u}_{xxx} +  \hat{u} u_{xxx}.
    \end{aligned}
\end{equation}
Multiplying both sides of \eqref{eq:CH_hat_eq} with $\hat{u}$, integrating by part and using the identities \eqref{eq:CH_id} we arrive at,
	\begin{equation}
		\frac{1}{2}\frac{d}{dt}\int_0^1 (\hat{u}^2 + (\hat{u}_x)^2) \,dx + \left.\kappa\hat{u}^2\right\vert_0^1 - \left. \hat{u}\hat{u}_{tx} \right\vert_0^1 = -3\mathcal{A} + 2\mathcal{B} + \mathcal{C} + \int_0^1 \hat{u}R_{int}\,dx,
		\label{eq:CH_hat_eq1}
	\end{equation}
	where
	\begin{equation}
	\begin{aligned}
		\mathcal{A} &:= \int_0^1 \hat{u} (\hat{u}\hat{u}_x + u\hat{u}_x + \hat{u}u_x)\,dx 
		= \int_0^1 (\hat{u}_x + u_x) \hat{u}^2\,dx + \int_0^1 \hat{u}u\hat{u}_x\,dx \\
		&= \int_0^1 (\hat{u}_x + u_x) \hat{u}^2\,dx - \frac{1}{2}\int_0^1 u_x\hat{u}^2\,dx + \left.\frac{1}{2}u\hat{u}^2\right\vert_0^1 
		= \int_0^1 (\hat{u}_x + \frac{1}{2}u_x) \hat{u}^2\,dx 
		= \int_0^1 (u^*_x - \frac{1}{2}u_x) \hat{u}^2\,dx.
	\end{aligned}
	\label{eq:CH_A}
	\end{equation}
We estimate $\mathcal{B}$ as follow
	\begin{equation}
	\begin{aligned}
	\mathcal{B} &:= \int_0^1 \hat{u} (\hat{u}_x\hat{u}_{xx} + u_x\hat{u}_{xx} + \hat{u}_xu_{xx})\,dx 
	= -\frac{1}{2}\int_0^1 \hat{u}_{xxx}\hat{u}^2\,dx + \left.\frac{1}{2}\hat{u}^2\hat{u}_{xx}\right\vert_0^1 \\
	&- \int_0^1 u_x\hat{u}_x^2\,dx + \frac{1}{2}\int_0^1 u_{xxx}\hat{u}^2\,dx - \left.\frac{1}{2}u_{xx}\hat{u}^2\right\vert_0^1 + \left.\hat{u}u_x\hat{u}_x\right\vert_0^1 
	-\frac{1}{2}\int_0^1 u_{xxx}\hat{u}^2\,dx + \left.\frac{1}{2}\hat{u}^2u_{xx}\right\vert_0^1  \\
	&= -\frac{1}{2}\int_0^1\hat{u}_{xxx}\hat{u}^2\,dx - \int_0^1 u_x\hat{u}_x^2\,dx + \left.\frac{1}{2}\hat{u}^2\hat{u}_{xx}\right\vert_0^1 + \left.\hat{u}u_x\hat{u}_x\right\vert_0^1
	\end{aligned}
	\label{eq:CH_B}
	\end{equation}
On the other hand,  $\mathcal{C}$ is given by
	\begin{equation}
	\begin{aligned}
	\mathcal{C} &:= \int_0^1 \hat{u} (\hat{u}\hat{u}_{xxx} + u\hat{u}_{xxx} + \hat{u}u_{xxx})\,dx 
	= \int_0^1 (u_{xxx} + \hat{u}_{xxx})\hat{u}^2\,dx + \int_0^1 \hat{u}u\hat{u}_{xxx}\,dx \\
	&= \int_0^1 (u_{xxx} + \hat{u}_{xxx})\hat{u}^2\,dx +\frac{3}{2}\int_0^1 u_x\hat{u}_x^2\,dx - \frac{1}{2}\int_0^1 u_{xxx}\hat{u}^2\,dx - \left.\hat{u}u_x\hat{u}_x\right\vert_0^1 \\
	&= \int_0^1 (u^*_{xxx} - \frac{1}{2}u_{xxx})\hat{u}^2\,dx +\frac{3}{2}\int_0^1 u_x\hat{u}_x^2\,dx - \left.\hat{u}u_x\hat{u}_x\right\vert_0^1. \\
	\end{aligned}
	\label{eq:CH_C}
	\end{equation}
 The boundary term in \eqref{eq:CH_hat_eq1} can be bounded as
	\begin{equation}
		| \left.\hat{u}\hat{u}_{tx}\right\vert_0^1  \leq (\Vert u^* \Vert_{C_t^1C_x^1} + \Vert u \Vert_{C_t^1C_x^1})(|\res_{sb1}| + | \res_{sb2} |).
		\label{eq:CH_bt}
	\end{equation}
	From \eqref{eq:CH_hat_eq1}-\eqref{eq:CH_bt}, we get
	\begin{equation}
		\begin{aligned}
			\frac{1}{2}\frac{d}{dt}\int_0^1 (\hat{u}^2 + (\hat{u}_x)^2) \,dx &= -\left.\kappa\hat{u}^2\right\vert_0^1 + \left.\hat{u}\hat{u}_{tx} \right\vert_0^1 - 3\mathcal{A} + 2\mathcal{B} + \mathcal{C} + \int_0^1 \hat{u}\res_{int}\,dx \\
			&= \int_0^1 (-3u_x^* + \frac{3}{2}u_x + \frac{1}{2}u_{xxx})\hat{u}^2 \,dx - \frac{1}{2}\int_0^1u_x\hat{u}_x^2 \,dx + \int_0^1 \hat{u}\res_{int}\,dx \\
		    &- \left.\kappa\hat{u}^2\right\vert_0^1 + \left.\hat{u}\hat{u}_{tx} \right\vert_0^1 + \left.\hat{u}^2\hat{u}_{xx}\right\vert_0^1 + \left.\hat{u}u_x\hat{u}_x\right\vert_0^1  \\
			&\leq (\frac{1}{2} + 3\Vert u^* \Vert_{C_t^0C_x^1} + \frac{3}{2}\Vert u \Vert_{C_t^0C_x^3})\int_0^1 \hat{u}^2 \,dx + \frac{1}{2}\Vert u \Vert_{C_t^0C_x^1}\int_0^1 \hat{u}_x^2 \,dx \\
			&+ (|\kappa| + \Vert u^* \Vert_{C_t^0C_x^2} + \Vert u \Vert_{C_t^0C_x^2})(\res_{sb1}^2 + \res_{sb2}^2) + \frac{1}{2}\int_0^1 \res_{int}^2 \,dx \\
			&+ \big(\Vert u^* \Vert_{C_t^1C_x^1} + \Vert u \Vert_{C_t^1C_x^1} + \Vert u \Vert_{C_t^0C_x^1}(\Vert u^* \Vert_{C_t^0C_x^1}+ \Vert u \Vert_{C_t^0C_x^1})\big)(|\res_{sb1}| + |\res_{sb2}|) \\
			&=: C_1\sum\limits_{i=1}^4|\res_{sbi}| + C_2\sum\limits_{i=1}^4\res_{sb, i}^2 + \frac{1}{2}\int_0^1 \res_{int}^2 \,dx + C_3\int_0^1 (\hat{u}^2 + \hat{u}_x^2) \,dx.
		\end{aligned}
		\label{eq:CH_hat_eq2}
	\end{equation}
Then integrating the above inequality over $[0,\bar{T}]$ for any $\bar{T} \leq T$, we obtain
	\begin{equation}
		\begin{aligned}
			&\int_0^1 (\hat{u}^2 + \hat{u}_x^2)(x, \bar{T}) \,dx 
			\leq \int_0^1 \res_{tb}^2\,dx \\
			&+ 2C_1T^{1/2}\sum\limits_{i=1}^4(\int_0^T\res_{sbi}^2\,dt)^{1/2} + 2C_2\sum\limits_{i=1}^4(\int_0^T\res_{sbi}^2\,dt)  
			+ \int_0^T\int_0^1 \res_{int}^2 \,dxdt + 2C_3\int_0^T\int_0^1 (\hat{u}^2 + \hat{u}_x^2)\,dxdt \\
			&\leq (1 + 2C_3Te^{2C_3T})  \big(\int_0^1 \res_{tb}^2\,dx + 8C_1T^{1/2}(\sum\limits_{i=1}^4\int_0^T\res_{sbi}^2\,dt)^{1/2} 
			+ 2C_2\sum\limits_{i=1}^4(\int_0^T\res_{sbi}^2\,dt) + \int_0^T\int_0^1 \res_{int}^2 \,dxdt \big). 
		\end{aligned}
		\label{eq:CH_hat_eq3}
	\end{equation}
We can now exploit Cauchy-Schwarz and Gronwall's inequalities and integrate \eqref{eq:CH_hat_eq3} over $[0, T]$ with respect to $\bar{T}$  in order to obtain
	\begin{equation}
		\begin{aligned}
			&\epsilon_G^2 := \int_0^T\int_0^1 \hat{u}(x, \bar{T})^2\,dxd\bar{T} 
			\leq \int_0^T\int_0^1 (\hat{u}^2 + \hat{u}_x^2)(x, \bar{T}) \,dxd\bar{T}  \\
			&\leq (T + 2C_3T^2e^{2C_3T})  \big(\int_0^1 \res_{tb}^2\,dx + 8C_1T^{1/2}(\sum\limits_{i=1}^4\int_0^T\res_{sbi}^2\,dt)^{1/2} 
			+ 2C_2\sum\limits_{i=1}^4(\int_0^T\res_{sbi}^2\,dt) + \int_0^T\int_0^1 \res_{int}^2 \,dxdt \big),
		\end{aligned}
	\end{equation}
	with
	\begin{equation}
		\begin{aligned}
			C_1 &=  \Vert u^* \Vert_{C_t^1C_x^1} + \Vert u \Vert_{C_t^1C_x^1} + \Vert u \Vert_{C_t^0C_x^1}(\Vert u^* \Vert_{C_t^0C_x^1}+ \Vert u \Vert_{C_t^0C_x^1}), \\
			C_2 &= |\kappa| + \Vert u^* \Vert_{C_t^0C_x^2} + \Vert u \Vert_{C_t^0C_x^2}, \quad
			C_3 = \frac{1}{2} + 3\Vert u^* \Vert_{C_t^0C_x^1} + \frac{3}{2}\Vert u \Vert_{C_t^0C_x^3}. 
		\end{aligned}
	\end{equation}
The statement of the theorem can be eventually proven by finally using the estimates \eqref{eq:hquad1}, \eqref{eq:hquad2}, \eqref{eq:hquad3}.
\end{proof}
\subsection{Numerical Experiments}
We set $\kappa = k^2$ in the Camassa-Holm equation \eqref{eq:vscl} and follow \cite{peakon_lim1, peakon_lim2, peakon_lim3} to consider the following exact solution for the \emph{single soliton},
\begin{equation}
\begin{aligned}
&u(\theta) = \frac{2kcp^2}{(1+k^2p^2) + (1-k^2p^2)\cosh\theta}, \\
&\Theta = p(x-\tilde{c}t + x_0), \\
&\Theta = \frac{\theta}{k} + p\ln\frac{(1+kp) + (1-kp)e^{\theta}}{(1-kp) + (1+kp)e^{\theta}},
\end{aligned}
\end{equation}
where $\tilde{c} = \frac{c}{k} = \frac{2k^2}{1-k^2p^2}$ and $p$ is an additional parameter. To obtain the exact solution, we need to compute the inverse of $\Theta(\theta)$. $\Theta(\theta)$ is invertible if and only if $0 < kp < 1$ which is an additional constraint when choosing $p$. Note from the above formula that the soliton moves to the right with speed $\tilde{c}$, while preserving its shape during the evolution. 
\begin{table}[htbp] 
    \centering
    \renewcommand{\arraystretch}{1.1} 
    
    \footnotesize{
        \begin{tabular}{ c c c c c c c c c c} 
            \toprule
            \bfseries   &\bfseries $N_{int}$  & \bfseries $N_{sb}$& \bfseries $N_{tb}$  &\bfseries $K-1$ & \bfseries $d$  &\bfseries $\lambda$ &\bfseries $\er_T$&\bfseries $\er_G^r$ \\ 
            \midrule
            \midrule
            Single Soliton &16384   & 4096 & 4096 &4&20& 1 &3.70e-06& 0.00191\% \\
            \midrule 
            Double Soliton           &16384   & 4096 & 4096 &8&24& 0.1 &0.00127& 0.186\% \\

            \bottomrule
        \end{tabular}
    \caption{Best performing \textit{Neural Network} configurations for the single soliton and double soliton problem. Low-discrepancy Sobol points are used for every reported numerical example.}
        \label{tab:CH}
    }
\end{table}
We apply the PINNs algorithm \ref{alg:PINN} to approximate the single soliton, with parameters $k = 0.6, p = 1$. The hyperparameters, corresponding to the smallest training error during ensemble training are reported in Table \ref{tab:CH}. In figure \ref{fig:CH} (left), we plot the exact soliton and its PINN approximation, at the initial time and at a later time and observe from the figure that the trained PINN can approximate the soliton to very high accuracy. This is further validated by the extremely low generalization error, reported in Table \ref{tab:CH}. Moreover, from Table \ref{tab:CH_single_cp}, we observe that the PINN for approximating this single solution is training very fast and an error of less than $1\%$ already results from less than $500$ iterations, that corresponding to approximately $2$ minutes of training time. 
\begin{figure}[h!]
    \begin{subfigure}{.49\textwidth}
        \centering
        \includegraphics[width=1\linewidth]{{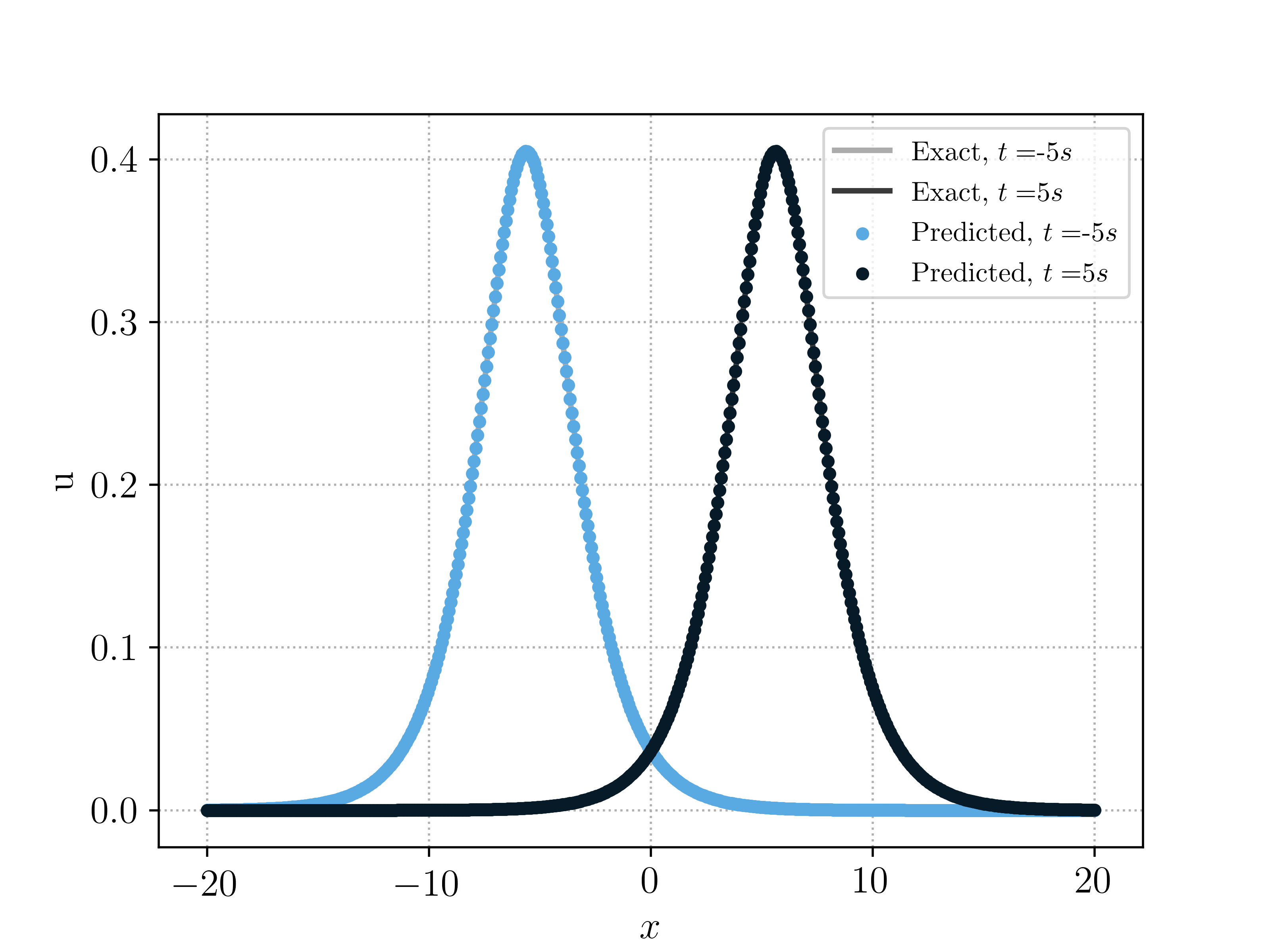}}
        \caption{Single soliton, $k=0.6, p=1$}
    \end{subfigure}
    \begin{subfigure}{.49\textwidth}
        \centering\
        \includegraphics[width=1\linewidth]{{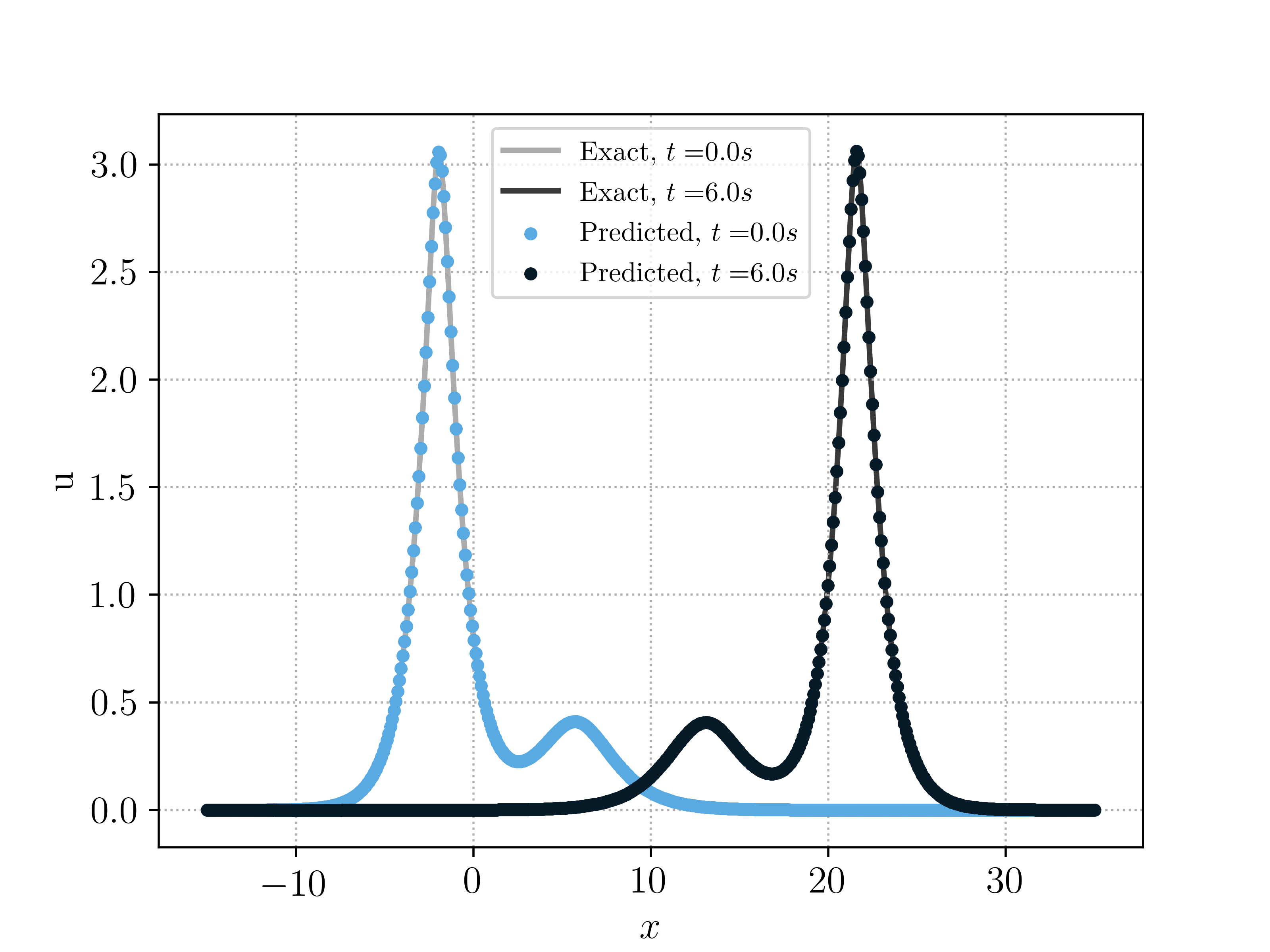}}
        \caption{Double soliton, $k = 0.6, p_1 = 1.5, p_2 = 1$}
    \end{subfigure}
    \caption{The exact and PINN solutions of single and double soliton test case of generalized CH equation.}
    \label{fig:CH}
\end{figure}

\begin{table}[htbp] 
    \centering
    \renewcommand{\arraystretch}{1.1} 
    
    \footnotesize{
        \begin{tabular}{ c c c c } 
            \toprule
            \bfseries max\_iters  &\bfseries training time$/s$ &\bfseries $\eps_T$ &\bfseries $\eps_G^r$ \\ 
            \midrule
            \midrule
         100   &    36  &   8.08e-03   &  2.81e-01 \\
         \midrule 
         500   &   161   &  4.71e-04  &   5.93e-03 \\
         \midrule 
        1000   &   284   &  1.61e-04  &   1.14e-03 \\
        \midrule 
        2000   &   560   &  4.96e-05   &  3.75e-04 \\
        \midrule 
        5000   &  1457   &  9.77e-06   &  9.31e-05 \\
        \midrule 
       10000   &  1667   &  2.83e-06   &  1.94e-05 \\
              
            \bottomrule
        \end{tabular}
    \caption{Results of different training iterations for single soliton case of CH equation.}
		\label{tab:CH_single_cp}
    }
\end{table}
Next, we again follow \cite{peakon_lim2} to consider additional parameters $p_{1,2}$ and define
\begin{equation}
	c_i = \frac{2k^3}{1-k^2p_i^2}, \quad 	w_i = -p_ic_i, \quad i = 1, 2
\end{equation}
and
\begin{equation}
	A_{12} = \frac{(p_1-p_2)^2}{(p_1+p_2)^2}.
\end{equation}
For $i = 1, 2$, we further define
\begin{equation}
\begin{aligned}
	a_i = 1 + kp_i, \quad b_i = 1 - kp_i,
\end{aligned}
\end{equation}
and as before, we define $\theta_i$ w.r.t. $y$ as
\begin{equation}
	\theta_i = p_i(y-c_it+\alpha_i), \quad i = 1, 2
\end{equation}
and 
\begin{equation}
\begin{aligned}
v_{12} &= \frac{4k^3(p_1-p_2)^2}{(1 - k^2p_1^2)(1 - k^2k_2^2)},  \quad b_{12} &= \frac{8k^6(p_1-p_2)^2(1-k^4p_1^2p_2^2)}{(1-k^2p_1^2)^2(1-k^2p_2^2)^2}
\end{aligned}
\end{equation}
Then, the exact double soliton solution w.r.t. $y$ is given by
\begin{equation}
	u(y, t) = k^2 + \frac{2}{k}\frac{w_1^2e^{\theta_1} + w_2^2e^{\theta_2} + b_{12}e^{\theta_1+\theta_2} + A_{12}(w_1^2e^{\theta_1+2\theta_2} + w_2^2e^{2\theta_1+\theta_2})}{rf^2},
\end{equation}
where
\begin{equation}
\begin{aligned}
f(y, t) &= 1 + e^{\theta_1} + e^{\theta_2} + A_{12}e^{\theta_1 + \theta_2} \\
r(y, t) &= k + \frac{2}{f^2}(c_1p_1^2e^{\theta_1} + c_2p_2^2e^{\theta_2} + v_{12}e^{\theta_1+\theta_2} + A_{12}(c_1p_1^2e^{\theta_1 + 2\theta2} + c_2p_2^2e^{2\theta_1 + \theta2})).
\end{aligned}
\end{equation}
Finally we have the following relation between $x$ and $y$
\begin{equation}
	x(y, t) = \frac{y}{k} + \ln\frac{a_1a_2 + b_1a_2e^{\theta1} + b_2a_1e^{\theta2} + b_1b_2A_{12}e^{\theta_1 + \theta_2}}{b_1b_2 + a_1b_2e^{\theta1} + q_2b_1e^{\theta2} + a_1a_2A_{12}e^{\theta_1 + \theta_2}} + k^2t + \alpha,
\end{equation}
where $\alpha$ is the phase parameter. To obtain the exact solution, we need to compute the inverse of $x(y, t)$ w.r.t. $y$ at the training points. $x(y, t)$ is invertible w.r.t. $y$ if and only if $0 < kp_i < 1, i = 1, 2$ which, again, is an additional constraint when choosing $p_1, p_2$. 

\begin{table}[htbp] 
    \centering
    \renewcommand{\arraystretch}{1.1} 
    
    \footnotesize{
        \begin{tabular}{ c c c c } 
            \toprule
            \bfseries max\_iters  &\bfseries training time$/s$ &\bfseries $\eps_T$ &\bfseries $\eps_G^r$ \\ 
            \midrule
            \midrule
         100   &    83  &   3.63e-02   &  7.19e-01 \\
         \midrule
         500   &   386  &   8.37e-03   &  1.68e-01 \\
         \midrule
        1000   &   762   &  5.52e-03  &   6.99e-02 \\
        \midrule
        2000   &  1508  &   3.10e-03  &   3.17e-02 \\
        \midrule
        5000  &   4083   &  8.71e-04  &   5.29e-03 \\
        \midrule
       10000  &   5747   &  4.09e-04   &  1.84e-03 \\
              
            \bottomrule
        \end{tabular}
    \caption{Results of different training iterations for double soliton case of CH equation.}
		\label{tab:CH_double_cp}
    }
\end{table}

We set $k=0,6,p_1=1.5,p_2=2$ in the above formula and apply the PINNs algorithm to compute the double soliton for the Camassa-Holm equation. The hyperparameters, corresponding to the smallest training error during ensemble training are reported in Table \ref{tab:CH}. In figure \ref{fig:CH} (right), we plot the exact soliton and its PINN approximation, at the initial time and at a later time and observe from the figure that the trained PINN can approximate the soliton to high accuracy. This is further validated by the very low generalization error, reported in Table \ref{tab:CH}. In particular, the ability of the PINN to resolve not just the sharp waves but also the dynamic wave interaction is noteworthy. The error as a function of the training iterations (and hence the computational cost) is shown in Table \ref{tab:CH_double_cp} and we observe that significantly more training time is necessary to resolve the double soliton than the single soliton. For instance, one needs approximately $25$ minutes of training time for obtaining an error of $3\%$. This difference in convergence of training iterations between the single soliton and double soliton cases is nicely explained from the observations in Figure \ref{fig:CH_double_cp}, where we plot the PINN solutions at a sequence of training iterations. We observe that for single soliton, the sharp peak is very quickly approximated during training. Similar, the sharp peak corresponding to the faster soliton is very quickly approximated in the double soliton case. On the other hand, the complicated wave pattern, with a crest and a through in the wake of the fast solution, takes several more training iterations to resolve. 
\begin{figure}
	\begin{subfigure}{.49\textwidth}
		\centering
		\includegraphics[width=1\linewidth]{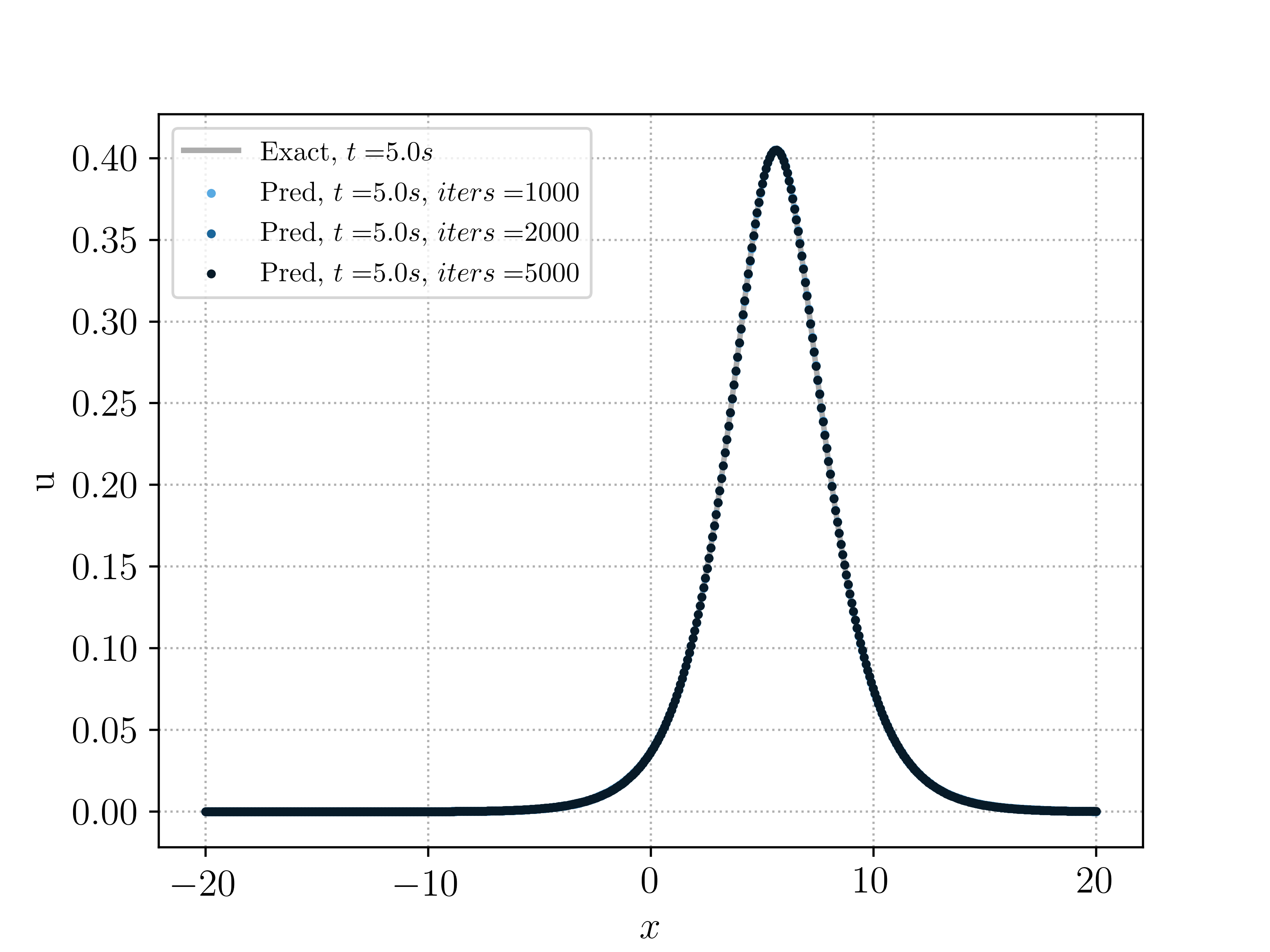}
		\caption{Single soliton}
	\end{subfigure}
	\begin{subfigure}{.49\textwidth}
		\centering
		\includegraphics[width=1\linewidth]{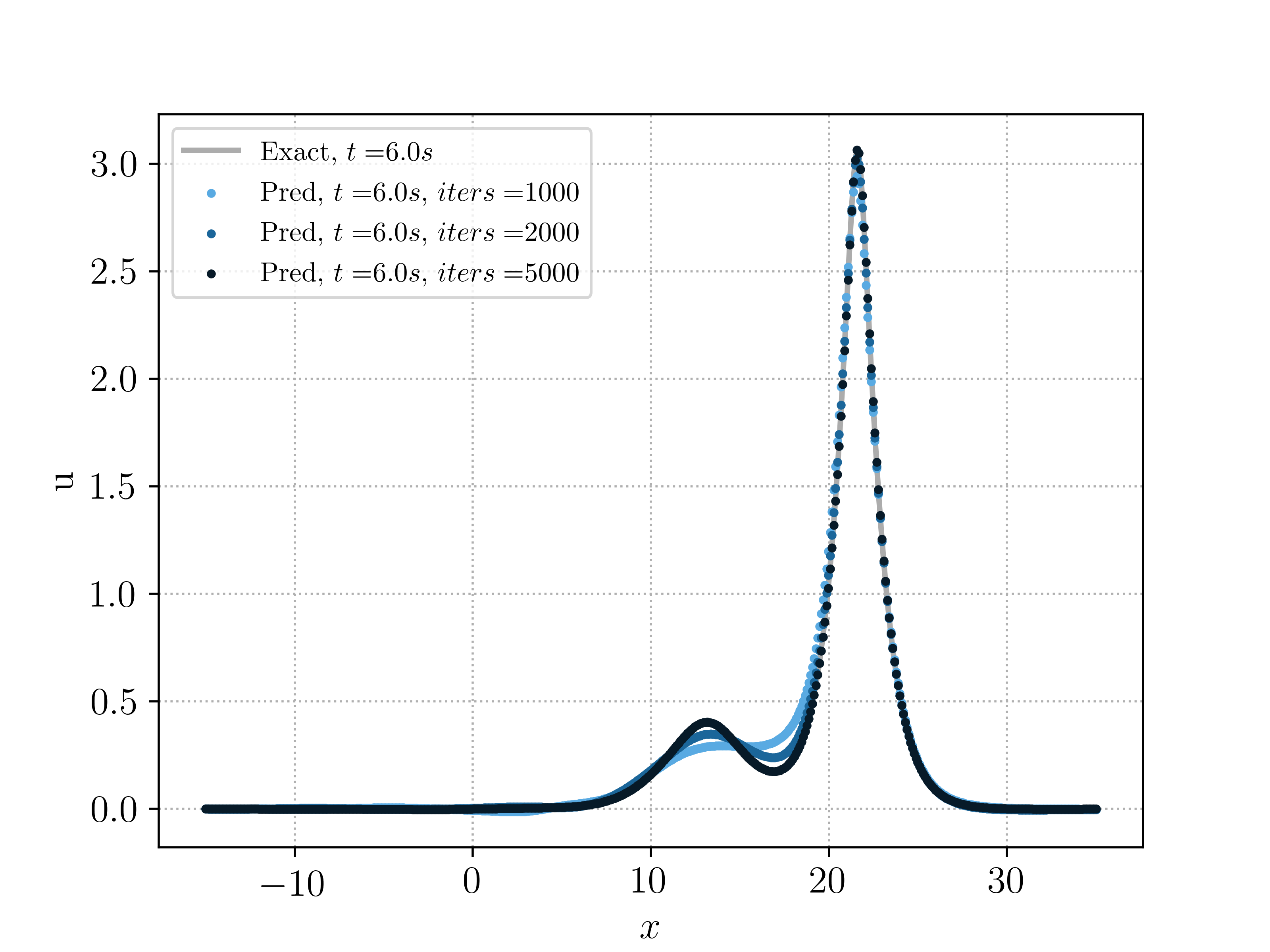}
		\caption{Double soliton}
	\end{subfigure}
	\caption{Plots of different train iterations at final time.}
	\label{fig:CH_double_cp}
\end{figure}
\begin{remark}
Taking the limit $\kappa \rightarrow 0$ in the formulas for the single soliton and the double soliton for the Camassa-Holm equation, results in the well-known single peakon and double peakon solutions of the Camassa-Holm equations \cite{Hol2}. However, peakons have a singularity in their derivatives and are at most in $H^1$. Thus, the stability result as well as the bound on the generalization error no longer hold, as they require $C^3$ regularity for the solutions. Consequently, we cannot expect to compute peakons with the current version of the PINNs algorithm. 
\end{remark}
\section{Benjamin-Ono Equation}
\label{sec:5}
\subsection{The underlying PDE}
As a final example of nonlinear dispersive PDEs, we consider the following Benjamin-Ono (BO) equation
\begin{equation}
\label{eq:ie1}
\begin{aligned}
u_t + uu_x + Hu_{xx} & =0, \quad x \in \R, \quad t>0, \\
u(x,0)&=u_0(x), \quad x \in \R, \\
u(x,t) &=u(x+1,t), \quad x \in \R, \quad t>0,
\end{aligned}
\end{equation}
with $H$ denoting the \emph{Hilbert transform} 
defined by the principle value integral
\begin{equation*}
  H u(x) := \mathrm{p.v.} \, \frac{1}{\pi} \int_{\R} \frac{u(x-y)}{y} \,dy.
\end{equation*}
The BO equation was first deduced by Benjamin \cite{benjamin} and Ono \cite{ono} as an approximate model for long-crested unidirectional waves at the interface of a two-layer system of incompressible inviscid fluids, one being infinitely deep. Later, it was shown to be a completely integrable system. In the periodic setting, Molinet \cite{molinet3a} proved well-posedness in $H^s (\mathbb{T})$ for  $s \ge0$. We recall the following well-posedness result for the classical solutions of the BO equation,
\begin{theorem}
	\label{0011}
	For any $s>5/3$, let $u_0 \in H^{s}(0,1)$. Then there exists a global smooth solution to \eqref{eq:ie1} such that
	$$
	u \in C(0,T; H^{s}(0,1)), \quad u_t \in C^1(0,T; H^{s-2}(0,1)).
	$$
\end{theorem}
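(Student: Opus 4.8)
The plan is to establish local well-posedness by the classical energy (quasilinear) method in the spirit of Kato and Iorio, and then to promote the local solution to a global one using the conservation laws coming from the complete integrability of the Benjamin--Ono equation. Throughout I write $J^s := (1-\partial_x^2)^{s/2}$ for the Bessel potential of order $s$ and exploit two structural facts. First, on the periodic domain the Hilbert transform $H$ has a purely imaginary, odd Fourier symbol, so it is skew-adjoint on $L^2(0,1)$ and commutes with $\partial_x$ and $J^s$; consequently the dispersive operator $H\partial_x^2$ is itself skew-adjoint. Second, since $s > 5/3 > 3/2$, one has the Sobolev embedding $H^s(0,1) \hookrightarrow W^{1,\infty}(0,1)$, which is the threshold at which the energy method closes.

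First I would regularize \eqref{eq:ie1}, for instance by Galerkin projection onto the first $N$ Fourier modes (or by a vanishing-viscosity term $\epsilon\partial_x^2 u$), so that the approximate problems are genuine ODEs admitting smooth solutions $u^N$. The crucial step is then a uniform a priori estimate: applying $J^s$ to the equation and pairing with $J^s u$ in $L^2$, the dispersive contribution $\langle J^s H\partial_x^2 u,\, J^s u\rangle$ \emph{vanishes} by skew-adjointness, leaving only the nonlinear term $\langle J^s(u u_x),\, J^s u\rangle$. I would control this using the Kato--Ponce commutator estimate
\begin{equation*}
\| J^s(u u_x) - u\,J^s u_x \|_{L^2} \lesssim \|u_x\|_{L^\infty}\|u\|_{H^s},
\end{equation*}
together with the integration-by-parts identity $\langle u\,J^s u_x,\, J^s u\rangle = -\tfrac12\langle u_x\,J^s u,\, J^s u\rangle$, reducing the whole term to a bound by $\|u_x\|_{L^\infty}\|u\|_{H^s}^2$. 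The embedding $H^s \hookrightarrow W^{1,\infty}$ then yields $\tfrac{d}{dt}\|u\|_{H^s}^2 \lesssim \|u\|_{H^s}^3$, which closes on an interval $[0,T]$ whose length depends only on $\|u_0\|_{H^s}$ and is uniform in the regularization parameter.

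With the uniform $H^s$ bound in hand I would pass to the limit: the equation gives a uniform bound on $\partial_t u^N$ in $H^{s-2}$, so Aubin--Lions--Simon compactness yields strong convergence in $C(0,T;H^{s'})$ for $s'<s$, enough to pass to the limit in the quadratic nonlinearity and produce a solution with $u \in L^\infty(0,T;H^s)$ and $u_t \in C(0,T;H^{s-2})$. Uniqueness follows by the same scheme applied to the difference $v = u_1 - u_2$: testing the $v$-equation against $v$ in $L^2$, the skew-adjoint dispersive term again drops out, the nonlinearity is estimated by $\|v\|_{L^2}^2$ times norms of the solutions already controlled, and Grönwall forces $v\equiv 0$. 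To upgrade weak-in-time regularity to genuine continuity $u\in C(0,T;H^s)$ and to obtain the Lipschitz-on-balls dependence of the flow map, I would run a Bona--Smith argument comparing the solution to those launched from mollified data.

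The final step, which I expect to be the main obstacle, is globality. The energy method alone gives only a local existence time governed by $\|u_0\|_{H^s}$, and $uu_x$ is genuinely quasilinear, so nothing cheap prevents the $H^s$ norm from blowing up. Here I would invoke the integrable structure of Benjamin--Ono: the conserved $L^2$ mass and Hamiltonian control the low-order norms, and the tower of polynomial conservation laws of the BO hierarchy furnishes an a priori bound on $\|u(t)\|_{H^s}$ over any finite interval at the integer levels, with non-integer $s$ handled by interpolation and persistence of regularity. This a priori bound rules out finite-time blow-up and continues the local solution to arbitrary finite $T$ --- precisely the point where complete integrability, rather than mere dispersion, is essential; for sharper endpoint regularity one would instead appeal to Molinet's gauge-transform analysis \cite{molinet3a}.
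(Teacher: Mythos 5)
The paper does not actually prove this theorem: it is stated as a recalled well-posedness result for the Benjamin--Ono equation, with a pointer to Molinet's gauge-transform theory in $H^s(\mathbb{T})$ for $s\ge 0$ and to its use by Kenig--Ponce--Vega, so there is no in-paper argument to compare yours against. Your proposal reconstructs the classical route (Iorio; Abdelouhab--Bona--Felland--Saut in the periodic setting): regularize, close an $H^s$ energy estimate using skew-adjointness of $H\partial_x^2$ together with the Kato--Ponce commutator bound and $H^s\hookrightarrow W^{1,\infty}$ for $s>3/2$, pass to the limit, and globalize via the conservation laws of the BO hierarchy. That is the correct skeleton, and the individual identities you invoke (skew-adjointness of $H\partial_x^2$ on the circle, the commutator reduction of $\langle J^s(uu_x),J^su\rangle$ to $\|u_x\|_{L^\infty}\|u\|_{H^s}^2$) are right.

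One step deserves more care than ``interpolation and persistence of regularity.'' The BO conserved quantities control the half-integer levels $H^{k/2}$, and for $5/3<s<2$ the data need not lie in $H^2$, so the best a priori bound available is at $H^{3/2}$, which does \emph{not} control $\|u_x\|_{L^\infty}$. Plain interpolation between $H^{3/2}$ and $H^s$ only gives $\frac{d}{dt}\|u\|_{H^s}^2\lesssim \|u\|_{H^s}^{2+\theta}$ with $\theta>0$, which does not by itself preclude finite-time blow-up of the $H^s$ norm. The standard fix is a Brezis--Gallou\"et-type logarithmic inequality, $\|u_x\|_{L^\infty}\lesssim \|u\|_{H^{3/2}}\bigl(1+\log(1+\|u\|_{H^s})\bigr)^{1/2}$, which turns the Gr\"onwall inequality into one with at worst double-exponential growth; with that insertion your argument closes for all $s>3/2$, hence for $s>5/3$. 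Finally, note that your scheme yields $u_t\in C(0,T;H^{s-2})$, not the asserted $u_t\in C^1(0,T;H^{s-2})$; the latter appears to be a typo in the paper (presumably $u\in C^1(0,T;H^{s-2})$ is meant), since differentiating the equation in time only places $u_{tt}$ in $H^{s-4}$.
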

Note that the above result was also used by Kenig, Ponce and Vega \cite{kenig} to prove uniqueness properties of BO equation. Moreover, the above result ensures that the solutions satisfy the equation \eqref{eq:ie1} pointwise for sufficiently smooth initial data.
\subsection{PINNs}
To specify the PINNs algorithm for the BO equation \eqref{eq:ie1}, we start by choosing the training set as in section \ref{sec:train}. We define the residual $\res$ in algorithm \ref{alg:PINN}, consisting of the following parts,
\begin{itemize}
	\item \emph{Interior residual} given by,
	\begin{equation}
	\label{eq:ires11}
	\res_{int,\theta}(x,t):= (u_{\theta})_t(x,t) + u_{\theta}(x,t) (u_{\theta})_{x}(x,t) + H (u_{\theta})_{xx}(x,t), \quad (x,t) \in (0,1) \times (0,T),
	\end{equation}
	\item  \emph{Spatial boundary Residual} given by,
	\begin{equation}
	\label{eq:ires31}
	\res_{sb,\theta}(x,t):= u_{\theta}(x,t)- u_{\theta}(x+1,t), \quad \forall x \in \R, ~ t \in (0,T]. 
	\end{equation}
	\item \emph{Temporal boundary Residual} given by,
	\begin{equation}
	\label{eq:ires41}
	\res_{tb,\theta}(x):= u_{\theta}(x,0) - u_0(x), \quad \forall x \in (0,1). 
	\end{equation}
\end{itemize}
Next, we consider the following loss function for training PINNs to approximate the BO equation \eqref{eq:ie1},
\begin{equation}
\label{eq:ilf1}
J(\theta):= \sum\limits_{n=1}^{N_{tb}} w^{tb}_n|\res_{tb,\theta}(x_n)|^2 + \sum\limits_{n=1}^{N_{sb}} w^{sb}_n|\res_{sb,\theta}(x_n,t_n)|^2 + \lambda  \sum\limits_{n=1}^{N_{int}} w^{int}_n|\res_{int,\theta}(x_n,t_n)|^2.
\end{equation}
Here the residuals are defined by \eqref{eq:ires11}-\eqref{eq:ires41}. $w^{tb}_n$ are the $N_{tb}$ quadrature weights corresponding to the temporal boundary training points $\train_{tb}$, $w^{sb}_n$ are the $N_{sb}$ quadrature weights corresponding to the spatial boundary training points $\train_{sb}$ and $w^{int}_n$ are the $N_{int}$ quadrature weights corresponding to the interior training points $\train_{int}$. Furthermore, $\lambda$ is a hyperparameter for balancing the residuals, on account of the PDE and the initial and boundary data, respectively. 
\subsection{Estimate on the generalization error.}
We denote the PINN, obtained by the algorithm \ref{alg:PINN}, for approximating the BO equation, as $\bu^{\ast}= \bu_{\theta^{\ast}}$, with $\theta^{\ast}$ being a (approximate, local) minimum of the loss function \eqref{eq:lf2},\eqref{eq:ilf1}. We consider the following generalization error,
\begin{equation}
\label{eq:iegen1}
\er_{G}:= \left(\int\limits_0^T \int\limits_0^1 \|\bu(x,t) - \bu^{\ast}(x,t)\|^2 dx dt \right)^{\frac{1}{2}},
\end{equation}
with $\|\cdot\|$ denoting the Euclidean norm in $\R^d$. We will bound the generalization error in terms of the following \emph{training errors},
\begin{equation}
\label{eq:ietrain1}
\er_T^2:= \underbrace{\sum\limits_{n=1}^{N_{tb}} w^{tb}_n|\res_{tb,\theta^{\ast}}(x_n)|^2}_{\left(\er_T^{tb}\right)^2} + \underbrace{\sum\limits_{n=1}^{N_{sb}} w^{sb}_n|\res_{sb,\theta^{\ast}}(x_n,t_n)|^2}_{\left(\er_T^{sb}\right)^2} +\lambda\underbrace{\sum\limits_{n=1}^{N_{int}} w^{int}_n|\res_{int,\theta^{\ast}}(x_n,t_n)|^2}_{\left(\er_T^{int}\right)^2}.
\end{equation}
As in the previous sections, the training errors can be readily computed \emph{a posteriori} from the loss function \eqref{eq:ilf1}. 

We have the following bound on the generalization error in terms of the training error,
\begin{theorem}
	\label{thm:euler1}
	Let $u \in C^3([0,1] \times [0,T])$ be the unique classical solution of Benjamin-Ono equation \eqref{eq:ie1}. Let $u^{\ast} = u_{\theta^{\ast}}$ be the PINN, generated by algorithm \ref{alg:PINN}, with loss function \eqref{eq:ilf1}. Then, the generalization error \eqref{eq:iegen1} is bounded by,
	\begin{equation}
	\label{0001}
	\begin{aligned}
	\epsilon_G &\leq C_1\big(\epsilon_T^{tb} + \epsilon_T^{int} + C_2(\epsilon_T^{sb})^{1/2} \\
	&+ (C_{quad}^{tb})^{1/2} N_{tb}^{-\alpha_{tb} / 2} + (C_{quad}^{int})^{1/2} N_{int}^{-\alpha_{int} / 2} + C_2(C_{quad}^{sb})^{1/4} N_{sb}^{-\alpha_{sb} / 4}\big),
	\end{aligned}
	\end{equation}
	where 
	\begin{equation}
	\begin{aligned}
	C_1 &= \sqrt{T + 2C_3T^2e^{2C_3T}}, \\
	C_2 &= T^{1/4}\sqrt{2(\Vert u^* \Vert_{C_t^0C_x^2} + \Vert u \Vert_{C_t^0C_x^2}) + 2\Vert u \Vert_{C_t^0C_x^0}(\Vert u \Vert_{C_t^0C_x^0} + \Vert u^* \Vert_{C_t^0C_x^0})}, \\
	C_3 &= \frac{1}{2} + \Vert u^* \Vert_{C_t^0C_x^1} + \frac{1}{2}\Vert u \Vert_{C_t^0C_x^1}, \\
	\end{aligned}
	\end{equation}
and $C^{tb}_{quad} = C^{tb}_{qaud}\left(\|\res_{tb,\theta^{\ast}}\|_{C^3}\right)$, $C^{int}_{quad} = C^{int}_{qaud}\left(\|\res_{int,\theta^{\ast}}\|_{C^{1}}\right)$, and $C^{sb}_{quad} = C^{sb}_{qaud}\left(\|\res_{sb,\theta^{\ast}}\|_{C^3}\right)$ are the constants associated with the quadrature errors \eqref{eq:hquad1}-\eqref{eq:hquad3}. 
\end{theorem}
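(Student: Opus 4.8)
The plan is to follow the same energy method as in Theorems~\ref{thm:heat} and~\ref{thm:burg}, the new ingredient being the skew-adjointness of the Hilbert transform, which makes the dispersive term disappear from the $L^2$ energy balance. Setting $\hat u := u^* - u$, equations~\eqref{eq:ie1} and~\eqref{eq:ires11} give
$$\hat u_t + (u^* u^*_x - u u_x) + H\hat u_{xx} = \res_{int}, \qquad \hat u(\cdot,0) = \res_{tb},$$
and, since the exact solution $u$ is $1$-periodic, the periodicity defect of the error equals that of the network, $\hat u(x,t) - \hat u(x+1,t) = u^*(x,t) - u^*(x+1,t) = \res_{sb}(x,t)$. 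First I would multiply by $\hat u$ and integrate over $(0,1)$ to obtain an evolution inequality for $\tfrac12\tfrac{d}{dt}\int_0^1\hat u^2\,dx$.

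The decisive point is the dispersive term $\int_0^1 \hat u\, H\hat u_{xx}\,dx$. Using that $H$ commutes with $\partial_x$ and integrating by parts once gives $[\hat u\, H\hat u_x]_0^1 - \int_0^1 \hat u_x\, H\hat u_x\,dx$; the volume integral vanishes identically by the antisymmetry $\langle f, Hf\rangle_{L^2}=0$, leaving only the boundary term. Crucially, no $\int_0^1\hat u_x^2\,dx$ contribution is produced, so—unlike the Camassa--Holm case—the relevant energy is the plain $L^2$ norm and Gronwall is applied only to $\int_0^1\hat u^2\,dx$. The surviving boundary value $H\hat u_x$ would be estimated by exploiting that the Hilbert transform, while unbounded on $C^0$, is controlled by one extra derivative, $\Vert H\hat u_x\Vert_{C^0}\lesssim \Vert\hat u\Vert_{C^2}\le \Vert u^*\Vert_{C^0_tC^2_x}+\Vert u\Vert_{C^0_tC^2_x}$; this is exactly the origin of the $C^2_x$ norms in $C_2$.

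For the nonlinear flux I would use $u^*u^*_x-uu_x = \hat u\hat u_x + u\hat u_x + \hat u u_x$ and integrate by parts, so that $\int_0^1\hat u^2\hat u_x\,dx=\tfrac13[\hat u^3]_0^1$ and $\int_0^1 u\hat u\hat u_x\,dx = \tfrac12[u\hat u^2]_0^1 - \tfrac12\int_0^1 u_x\hat u^2\,dx$. The volume remainders are bounded by $(\Vert u^*\Vert_{C^0_tC^1_x}+\tfrac12\Vert u\Vert_{C^0_tC^1_x})\int_0^1\hat u^2\,dx$, which together with the $\tfrac12\int_0^1\hat u^2\,dx$ coming from Young's inequality on $\int_0^1\hat u\res_{int}\,dx$ assembles the Gronwall constant $C_3$; the boundary remainders are linear in $\res_{sb}$ with coefficients controlled by $\Vert\hat u\Vert_{C^0}\le\Vert u^*\Vert_{C^0}+\Vert u\Vert_{C^0}$, producing the $\Vert u\Vert_{C^0}(\Vert u\Vert_{C^0}+\Vert u^*\Vert_{C^0})$ term in $C_2$. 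The key structural feature is that all boundary contributions appear only linearly in $|\res_{sb}|$ (never as $\res_{sb}^2$), which is precisely why the final estimate~\eqref{0001} contains the single half-power term $C_2(\epsilon_T^{sb})^{1/2}$ rather than both a linear and a square-root boundary term as in~\eqref{result_01} and~\eqref{001}.

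Integrating over $[0,\bar T]$, applying Cauchy--Schwarz to replace $\int_0^T|\res_{sb}|\,dt$ by $T^{1/2}(\int_0^T\res_{sb}^2\,dt)^{1/2}$ (the source of both the $\sqrt{\epsilon_T^{sb}}$ scaling and the $T^{1/4}$ factor in $C_2$ after the concluding square root), and invoking Gronwall to absorb $\int_0^1\hat u^2\,dx$ yields the factor $1+2C_3Te^{2C_3T}$; a further integration in $\bar T$ over $[0,T]$ produces $\epsilon_G^2$ with prefactor $C_1^2 = T+2C_3T^2e^{2C_3T}$, and finally the quadrature bounds~\eqref{eq:hquad1}--\eqref{eq:hquad3} (with $\Vert\res_{tb}\Vert_{C^3}$, $\Vert\res_{int}\Vert_{C^1}$ and $\Vert\res_{sb}\Vert_{C^3}$) convert the residual integrals into the training errors plus quadrature remainders, giving~\eqref{0001}. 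I expect the main obstacle to be the rigorous treatment of the nonlocal term $\int_0^1\hat u\,H\hat u_{xx}\,dx$: because the network $u^*$ is not exactly periodic, one must define $Hu^*$ and verify that the cancellation $\langle\hat u_x,H\hat u_x\rangle=0$ together with the surviving boundary term is genuinely controlled by $\res_{sb}$ in the $C^3$ norm, rather than by uncontrolled interior quantities.
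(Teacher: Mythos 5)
Your proposal follows essentially the same route as the paper's proof: the $L^2$ energy estimate in which $\int_0^1 \hat{u}_x H(\hat{u}_x)\,dx$ vanishes by the antisymmetry of the Hilbert transform, the surviving boundary term is reduced via periodicity to $H(\hat{u}_x)(0)\,(\hat{u}(1)-\hat{u}(0))$ and hence controlled linearly by $|\res_{sb}|$ with a $C^2_x$ bound, the nonlinear flux is split as $\hat{u}\hat{u}_x + u\hat{u}_x + \hat{u}u_x$, and Gronwall plus Cauchy--Schwarz in time followed by the quadrature bounds yields the stated constants. The approach and the resulting constants match; no substantive difference.
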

\begin{proof}
	We will drop explicit dependence of all quantities on the parameters $\theta^{\ast}$ for notational convenience. We denote the difference between the underlying solution $u$ of \eqref{eq:ie1} and PINN $u^{\ast}$ as $\hat{u} = u^{\ast} - u$. Using the PDE \eqref{eq:ie1} and the definitions of the residuals \eqref{eq:ires11}-\eqref{eq:ires41}, a straightforward calculation yields the following PDE for the $\hat{u}$,
	\begin{equation}
	\label{eq:iehat1}
	\begin{aligned}
	\hat{u}_t + H\hat{u}_{xx} +u^{\ast} u^{\ast}_x- u u_x&= \res_{u}, \quad \mbox{a.e.}\,(x,t) \in (0,1) \times (0,T), \\
	\hat{u}(0,t) - \hat{u}(1,t) &= \res_{sb}, \quad t \in (0,T), \\
	\hat{u}(x,0) &= \res_{tb}, \quad x \in (0,1).
	\end{aligned}
	\end{equation}
	We take a inner product of the equation in \eqref{eq:iehat1} with the vector $\hat{u}$, and integrate by parts to obtain the term coming from the Hilbert transform
	\begin{align*}
	\int_0^1 \hat{u} H(\hat{u}_{xx}) \,dx = - \int_0^1 \hat{u}_x H(\hat{u}_x)\,dx + H(\hat{u}_x)(1) \hat{u}(1) - H(\hat{u}_x)(0) \hat{u}(0) = H(\hat{u}_x)(1) \hat{u}(1) - H(\hat{u}_x)(0) \hat{u}(0).
	\end{align*}
	$\int_0^1 \hat{u}_x H(\hat{u}_x)\,dx$ vanishes because Hilbert transform is anti-symmetric w.r.t L2 inner product. The boundary terms on the other hand can be bounded as follows,
	\begin{align*}
	\Big[H(\hat{u}_x)(1) \hat{u}(1) - H(\hat{u}_x)(0) \hat{u}(0) \Big] 
	&= \Big[ \big(H(\hat{u}_x)(1) - H(\hat{u}_x)(0) \big) \hat{u}(1) + H(\hat{u}_x)(0) (\hat{u}(1) -\hat{u}(0))\Big] \\
	&= H(\hat{u}_x)(0) (\hat{u}(1) -\hat{u}(0)) 
	\le \| H(\hat{u}_x)(0) \|_{C^0_t} |\res_{sb}| 
    \le (\| u \|_{C^0_tC^2_x} + \| u^* \|_{C^0_tC^2_x}) |\res_{sb}|.
	\end{align*}
In the second line, we have exploited the periodicity of $u$ and $u^*$. For the remaining terms, we can follow the arguments given before and get
\begin{equation}
	\begin{aligned}
	\frac{1}{2}\frac{d}{dt}\int_0^1 \hat{u}^2\,dx 
	&= - \int_0^1 \hat{u}H\hat{u}_{xx}\,dx - \int_0^1\hat{u}(\hat{u}\hat{u}_x - u\hat{u}_x + u_x\hat{u})\,dx + \int_0^1 \hat{u}\res_{int}\,dx \\
	&\leq (\Vert u^* \Vert_{C_x^2} + \Vert u \Vert_{C_x^2})|\res_{sb}| 
	- \int_0^1 (u^*_x - \frac{1}{2}u_x)\hat{u}^2 - \left.\frac{1}{2}u\hat{u}^2\right\vert_0^1 
	+ \int_0^1 \hat{u}\res_{int}\,dx \\
	&\leq (\Vert u^* \Vert_{C_x^2} + \Vert u \Vert_{C_x^2})|\res_{sb} | \\
	&+ (\Vert u^* \Vert_{C_x^1} + \frac{1}{2}\Vert u \Vert_{C_x^1})  \int_0^1 \hat{u}^2\,dx + \Vert u \Vert_{C_x^0}(\Vert u \Vert_{C_x^0} + \Vert u^* \Vert_{C_x^0})|\res_{sb}| \\
	&+ \frac{1}{2}\int_0^1 \res_{int}^2 \,dx + \frac{1}{2}\int_0^1 \hat{u}^2 \,dx \\
	&\leq \big(\Vert u^* \Vert_{C_t^0C_x^2} + \Vert u \Vert_{C_t^0C_x^2} + \Vert u \Vert_{C_t^0C_x^0}(\Vert u \Vert_{C_t^0C_x^0} + \Vert u^* \Vert_{C_t^0C_x^0})\big)|\res_{sb}| \\
	&+ \frac{1}{2}\int_0^1 \res_{int}^2 \,dx + (\frac{1}{2} + \Vert u^* \Vert_{C_t^0C_x^1} + \frac{1}{2}\Vert u \Vert_{C_t^0C_x^1})  \int_0^1 \hat{u}^2\,dx  \\
	&=: C_1|\res_{sb}| + \frac{1}{2}\int_0^1 \res_{int}^2 \,dx + C_2\int_0^1 \hat{u}^2\,dx.
	\end{aligned}
\end{equation}
Then integrating the above inequality over $[0,\bar{T}]$ for any $\bar{T} \leq T$ and using Cauchy-Schwarz and Gronwall's inequalities  we obtain
	\begin{equation}
	\begin{aligned}
	\int_0^1 \hat{u}(x, \bar{T})^2\,dx 
	&\leq \int_0^1 \res_{tb}^2\,dx + 2C_1T^{1/2}(\int_0^T\res_{sb}^2\,dt)^{1/2} + \int_0^T\int_0^1 \res_{int}^2 \,dxdt + 2C_2\int_0^{\bar{T}}\int_0^1 \hat{u}^2\,dxdt \\
	&\leq (1 + 2C_2Te^{2C_2T})  \big(\int_0^1 \res_{tb}^2\,dx + C_1T^{1/2}(\int_0^T\res_{sb}^2\,dt)^{1/2} + \int_0^T\int_0^1 \res_{int}^2 \,dxdt \big).
	\end{aligned}
	\label{eq:BO_hat_eq3}
	\end{equation}
     Finally, we integrate \eqref{eq:BO_hat_eq3} over $\bar{T} \in [0, T]$ and arrive at
	\begin{equation}
	\begin{aligned}
	\epsilon_G^2 &:= \int_0^T\int_0^1 \hat{u}(x, \bar{T})^2\,dxd\bar{T} \\
	&\leq (T + 2C_2T^2e^{2C_2T})  \big(\int_0^1 \res_{tb}^2\,dx + 2C_1T^{1/2}(\int_0^T\res_{sb}^2\,dt)^{1/2} + \int_0^T\int_0^1 \res_{int}^2 \,dxdt \big),
	\end{aligned}
	\end{equation}
	with
	\begin{equation}
	\begin{aligned}
	C_1 =  \Vert u^* \Vert_{C_t^0C_x^2} + \Vert u \Vert_{C_t^0C_x^2} + \Vert u \Vert_{C_t^0C_x^0}(\Vert u \Vert_{C_t^0C_x^0} + \Vert u^* \Vert_{C_t^0C_x^0}), \quad
	C_2 = \frac{1}{2} + \Vert u^* \Vert_{C_t^0C_x^1} + \frac{1}{2}\Vert u \Vert_{C_t^0C_x^1}.
	\end{aligned}
	\end{equation}
The proof of theorem can be eventually attained  by applying the estimates \eqref{eq:hquad1}, \eqref{eq:hquad2}, \eqref{eq:hquad3}.

\end{proof}
\subsection{Evaluation of the singular integral}
Note that the in the PINNs algorithm for approximating the BO equation as well as in the derivation of the above error bound, we have assumed that the Hilbert transform in \eqref{eq:ie1} can be evaluated exactly. In practice, this is not possible and we need to approximate the Hilbert transform. To this end, we focus on the periodic case. The periodic Hilbert transform is defined by
\begin{equation}
	H_{per}u(x) = \textrm{p.v.} \frac{1}{2L} \int_{-L}^L \cot(\frac{\pi}{2L}y)u(x - y)dy.
\end{equation}
To compute the above \emph{non-local} term, we use a Cartesian grid $\{ x_i \}_{i = -N}^N$ and additionally require $x_0 = 0$. And we can discretize the singular integral term as
\begin{equation}
\label{eq:ehr}
\begin{aligned}
	H_{per}u_{xx}(x) &= \textrm{p.v.} \frac{1}{2L} \int_{-L}^L \cot(\frac{\pi}{2L}y)u_{xx}(x - y)dy \\
	&\approx \frac{1}{2N} \sum\limits_{j=-N, j \neq 0}^N \cot(\frac{\pi}{2L}x_j)u_{xx}(x - x_j).
\end{aligned}
\end{equation}
We exclude index $j = 0$ in order to be consistent with the definition of principal value because $x_0 = 0$ is a singularity of $\cot(\frac{\pi}{2L}x_j)$. 

More importantly, what we need to compute is the term, $\left.H_{per}u_{xx}(x)\right\vert_{x_i}$ which can be represented as a discrete periodic convolution of $\cot(\frac{\pi}{2L}x_j)$ and $\left.u_{xx}(x)\right\vert_{x_j}$ 
\begin{equation}
\label{eq:ehr1}
\begin{aligned}
H_{per}u_{xx}(x_i) &\approx \frac{1}{2N} \sum\limits_{j=-N, j \neq 0}^N \cot(\frac{\pi}{2L}x_j)u_{xx}(x_i - x_j) \\
&= \frac{1}{2N} \sum\limits_{j=-N, j \neq 0}^N \cot(\frac{\pi}{2L}x_j)u_{xx}(x_{i - j}).
\end{aligned}
\end{equation}
This implies that to compute $H_{per}u_{xx}(x_i), -N \leq i \leq N$ we only need to compute $u_{xx}(x_i), -N \leq i \leq N$. Moreover, the discrete periodic convolution \eqref{eq:ehr1} can be accelerated by a Fast Fourier transform(FFT) to obtain a complexity of $O(N\log(N))$.
\begin{table}[htbp] 
    \centering
    \renewcommand{\arraystretch}{1.1} 
    
    \footnotesize{
        \begin{tabular}{ c c c c c c c c c c c} 
            \toprule
            \bfseries   &\bfseries $N_{int}$  & \bfseries $N_{sb}$& \bfseries $N_{tb}$  &\bfseries $K-1$ & \bfseries $d$ &\bfseries $\lambda$ &\bfseries $\er_T$&\bfseries $\er_G^r$ &\bfseries $\Delta$ \\ 
            \midrule
            \midrule
            Single Soliton &32768   & 8192 & 8192 & 12& 24 & 1 &0.000296& 0.773\% & 4 \\
            \midrule 
            Double Soliton           &65536   & 16384 & 16384 & 4 & 20 & 10 &0.00616 & 0.657\% & 30 \\

            \bottomrule
        \end{tabular}
    \caption{Best performing \textit{Neural Network} configurations for the periodic single soliton and real-line double soliton problem. Low-discrepancy Sobol points are used for all boundary points; Cartesian grids are used for all interior points.}
        \label{tab:BO}
    }
\end{table}

\subsection{Numerical experiments}
In addition to the previous hyperparameters, an additional one $\Delta = \frac{\Delta t}{\Delta x}$ i.e., the ratio of the time and space steps on the space-time Cartesian grid, also needs to be set for the BO equation and we select it through ensemble training. We start with the periodic single soliton test case with the exact solution,
\begin{equation}
u(x, t) = \frac{2c\delta^2}{1 - \sqrt{1-\delta^2}\cos(c\delta(x-ct-x_0))}, \quad \delta = \frac{\pi}{cL},
\end{equation}
where $L$ is the half periodicity. This represents a single bump moving to the right with speed $c$ periodically with initial peak at $x = x_0$. In our experiments, we choose $L = 15$, $c = 0.25$ and $x_0 = 0$. The selected hyperparameters as a result of the ensemble training procedure are presented in Table \ref{tab:BO}. In figure \ref{fig:BO} (left), we plot the exact single soliton and its PINN approximation and observe that the PINN approximate the exact solution very well. This is further verified from Table \ref{tab:BO}, where we report an error of less than $1 \%$. 
\begin{figure}[h!]
    \begin{subfigure}{.49\textwidth}
        \centering
        \includegraphics[width=1\linewidth]{{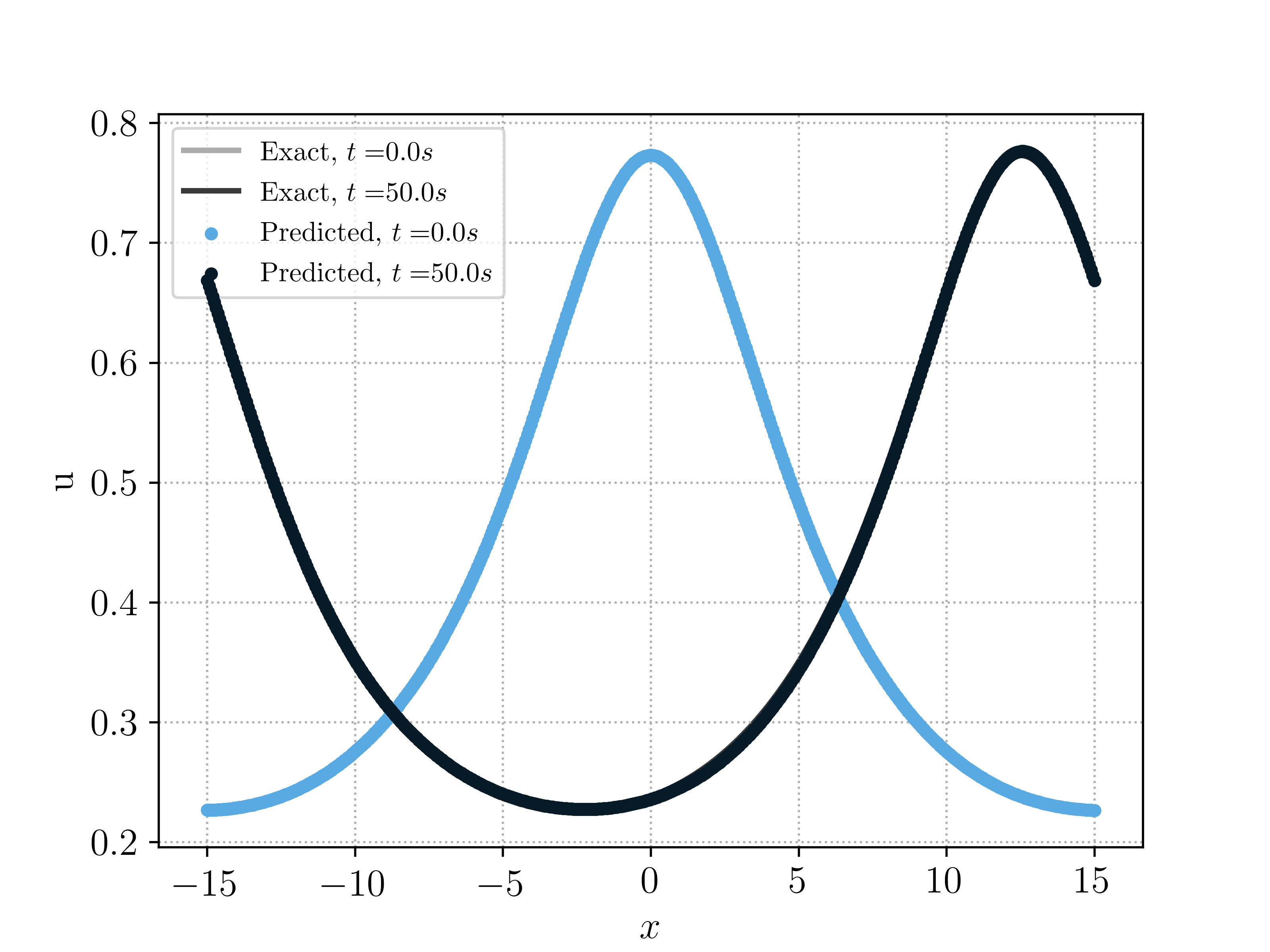}}
        \caption{Single soliton}
    \end{subfigure}
    \begin{subfigure}{.49\textwidth}
        \centering\
        \includegraphics[width=1\linewidth]{{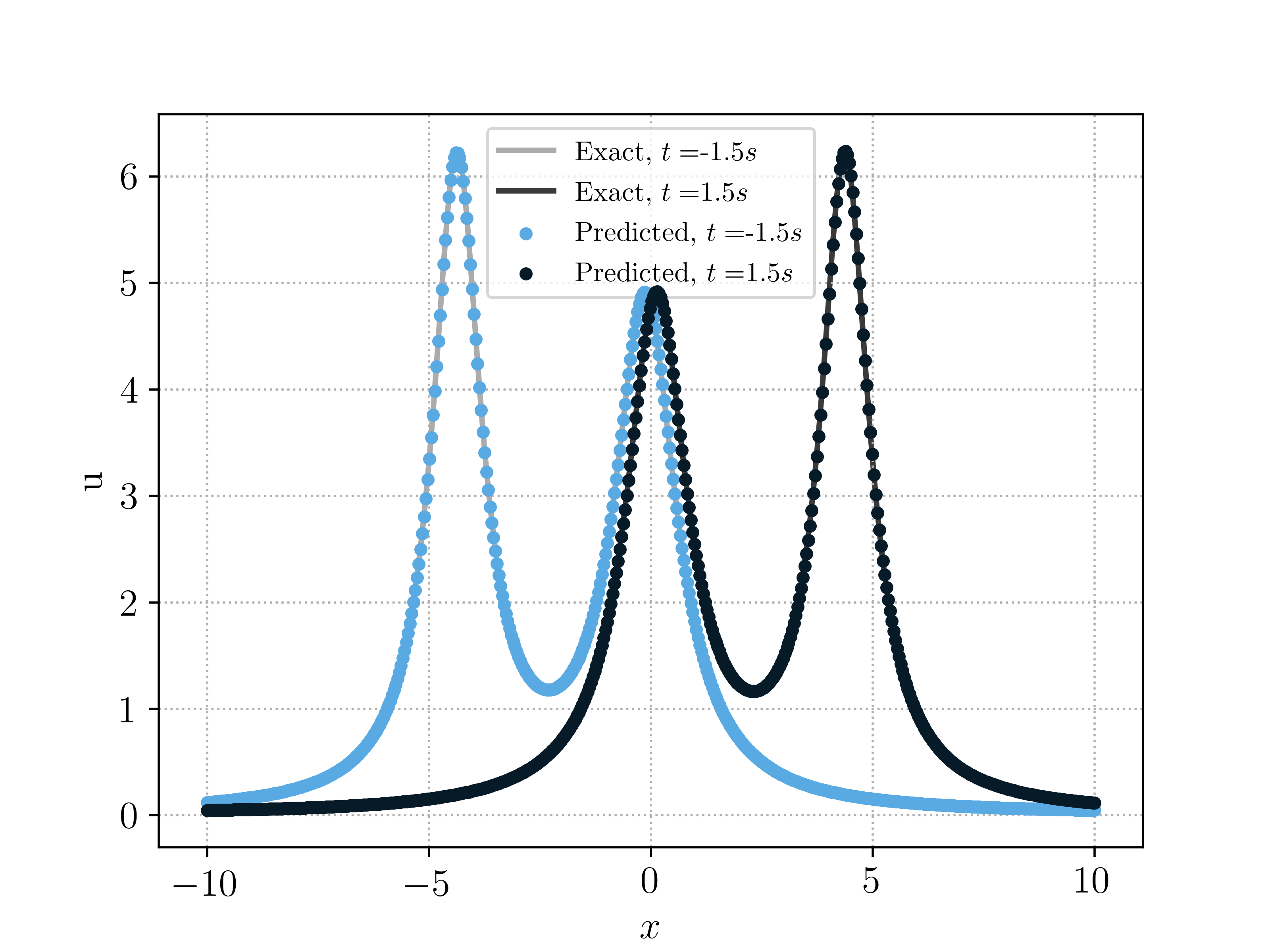}}
        \caption{Double soliton}
    \end{subfigure}
    \caption{The exact and PINN solutions of single and double soliton of BO equation.}
    \label{fig:BO}
\end{figure}
Next, we consider the double soliton case. In this case, the exact solution formula for the periodic double soliton is very complicated to evaluate. Hence, we consider the so-called \emph{long wave limit} by taking $L \rightarrow +\infty$. Hence, we consider the interacting solitons on the real line with formulas,
\begin{equation}
u(x, t) = \frac{4c_1c_2(c_1\lambda_1^2 + c_2\lambda_2^2 + (c_1+c_2)^3c_1^{-1}c_2^{-1}(c_1-c_2)^{-2})}{(c_1c_2\lambda_1\lambda_2 - (c_1+c_2)^2(c_1-c_2)^{-2})^2 + (c_1\lambda_1 + c_2\lambda_2)^2},
\end{equation}
where
\begin{equation}
\begin{aligned}
\lambda_1 = x - c_1t, \quad \lambda_2 = x - c_2t
\end{aligned}
\end{equation}
\begin{table}[htbp] 
    \centering
    \renewcommand{\arraystretch}{1.1} 
    
    \footnotesize{
        \begin{tabular}{ c c c c } 
            \toprule
            \bfseries max\_iters  &\bfseries training time$/s$ &\bfseries $\eps_T$ &\bfseries $\eps_G^r$ \\ 
            \midrule
            \midrule
         100   &    87   &  1.36e-02   &  4.11e-01 \\
         \midrule
         500   &   430   &  3.83e-03   &  2.36e-01 \\
         \midrule
        1000   &   888   &  3.30e-03   &  2.34e-01 \\
        \midrule
        2000   &  1667   &  1.61e-03  &   6.13e-02 \\
        \midrule
        5000   &  3492   &  4.56e-04  &   8.22e-03 \\
        \midrule
      10000   &  6107   &  2.96e-04   &  7.73e-03 \\
              
            \bottomrule
        \end{tabular}
    \caption{Results of different training iterations for single soliton case of BO equation.}
		\label{tab:BO_single_cp}
    }
\end{table}
This solution represents two waves that “collide” at $t = 0$ and separate for $t > 0$. For large $|t|$, $u(\cdot, t)$ is close to a sum of two single solitons at different locations. We choose $c_1 = 2 $ and $ c_2 = 1$ in our experiments. Given the impossibility of computing over the whole real line, we restrict ourselves to the computational domain $[-L, L]$. We first extend the PINN by zero to the extended computational domain $[-5L, 5L]$ and then use a similar discretization as in \eqref{eq:ehr}, to compute the discrete periodic convolution of $\frac{1}{\pi x_j}$ and $\left.u_{xx}(x)\right\vert_{x_j}$ and finally restrict the result of discrete periodic convolution onto domain $[-L, L]$.
\begin{table}[htbp] 
    \centering
    \renewcommand{\arraystretch}{1.1} 
    
    \footnotesize{
        \begin{tabular}{ c c c c } 
            \toprule
            \bfseries max\_iters  &\bfseries training time$/s$ &\bfseries $\eps_T$ &\bfseries $\eps_G^r$ \\ 
            \midrule
            \midrule
         100    &   74  &   2.98e-01  &   4.69e-01 \\
         \midrule
         500   &   325  &  3.07e-02   &  2.96e-02 \\
         \midrule
        1000   &   703   &  1.13e-02   &  3.92e-03 \\
        \midrule
        2000   &  1280  &   7.19e-03  &   6.98e-03 \\
        \midrule
        5000  &   1715   &  6.16e-03  &   6.57e-03 \\
        \midrule
      10000  &   1937   &  6.16e-03   &  6.57e-03 \\
              
            \bottomrule
        \end{tabular}
    \caption{Results of different training iterations for double soliton case of BO equation.}
		\label{tab:BO_double_cp}
    }
\end{table}

The resulting PINN approximation together with the exact double soliton is plotted in Figure \ref{fig:BO} (right). We observe a very accurate approximation of the BO double-soliton interaction by the PINN and this is also confirmed by a very low error of less than $1\%$, reported in Table \ref{tab:BO}. 

The training times for the periodic single soliton are shown in Table \ref{tab:BO_single_cp} and we see that the training is significantly slower in this case, when compared to other test cases, with a relative error of approximately $6\%$ in approximately $25$ minutes. On the other hand, the PINN approximating the real-line double soliton is significantly faster to train. From the training times reported in Table \ref{tab:BO_double_cp}, we see that an error of about $3\%$ is already achieved for a training of merely $5$ minutes. Given the non-local as well as dispersive nature of the underlying solutions, attaining such low errors in a short time is noteworthy.

\begin{figure}
	\begin{subfigure}{.49\textwidth}
		\centering
		\includegraphics[width=1\linewidth]{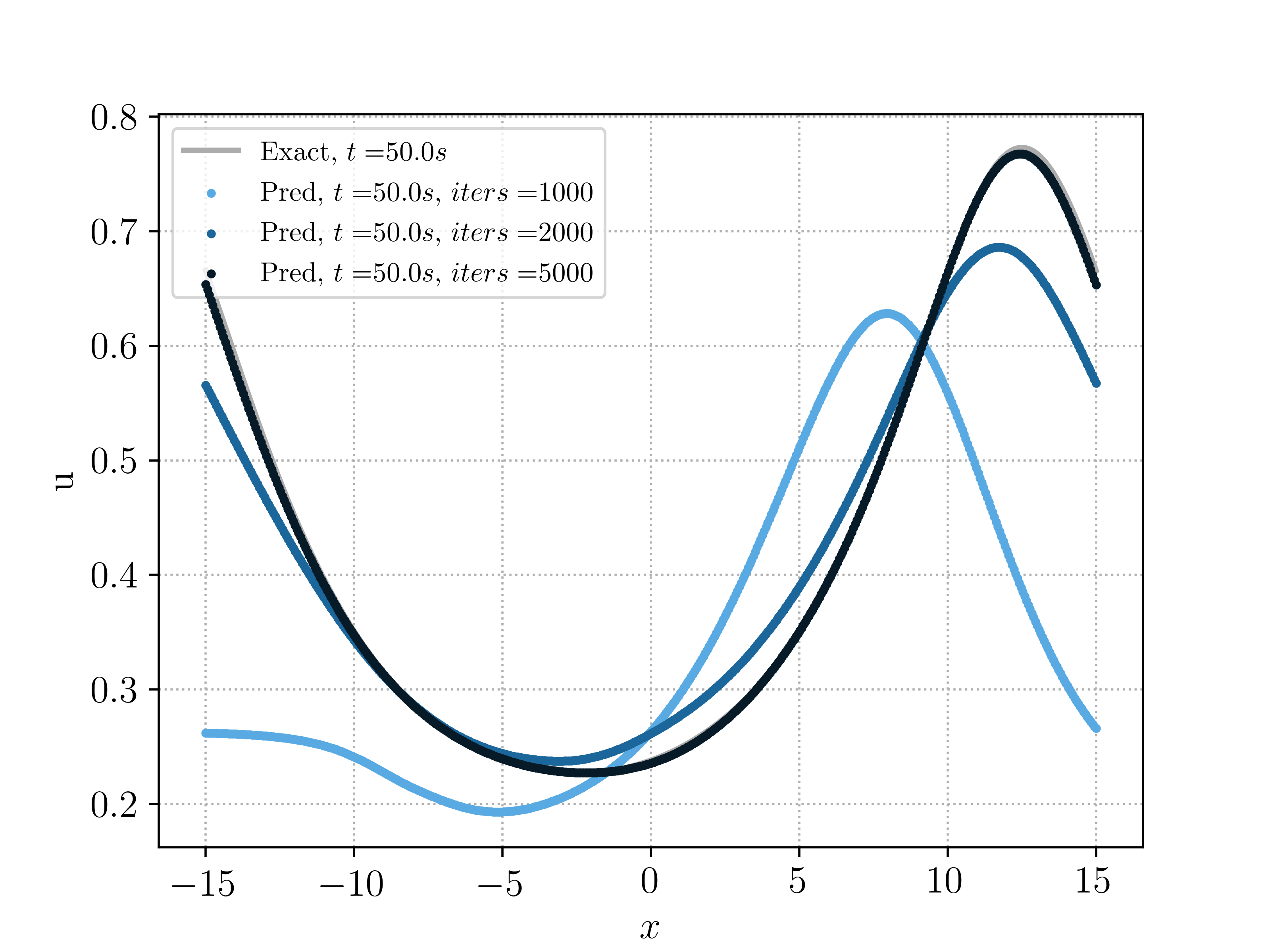}
		\caption{Periodic single soliton}
	\end{subfigure}
	\begin{subfigure}{.49\textwidth}
		\centering
		\includegraphics[width=1\linewidth]{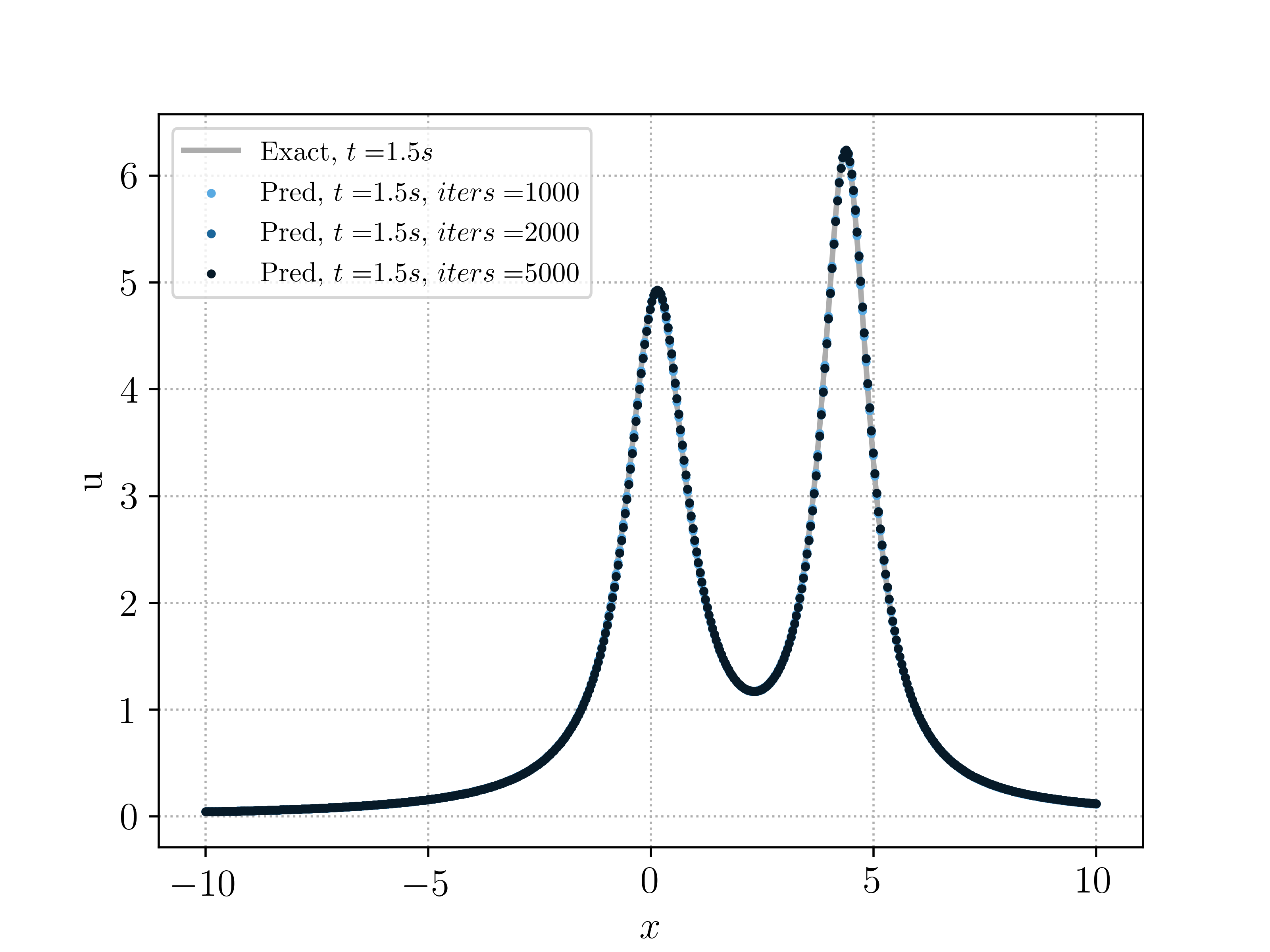}
		\caption{Real-line double soliton}
	\end{subfigure}
	\caption{Plots of different train iterations at final time.}
	\label{fig:BO_double_cp}
\end{figure}

Why is it significantly harder to train the periodic single soliton, when compared to the real-line double soliton. To investigate this question, we plot the PINN approximation to both test cases in Figure \ref{fig:BO_double_cp} for different training iterations. As observed in this figure, both the boundary values and the peak of the periodic single soliton take quite some LBFGS iterations to converge, which explains the relatively high computational cost. On the other hand, the real-line double soliton is approximated very fast as it has two sharp peaks, which are resolved with very few LBFGS iterations. 
\section{Discussion}
Nonlinear dispersive PDEs such as the KdV-Kawahara equation, the Camassa-Holm equation and the Benjamin-Ono equation arise in the modeling of shallow-water waves. In addition to being completely integrable, these PDEs contain interesting solutions such as multiple colliding solitons, which result from a subtle balance between the effects of non-linearity and dispersion. Given the fact that these PDEs are nonlinear and contain either high-order or non-local partial derivatives, standard numerical methods such as finite difference and finite element methods can be very expensive for computing accurate solutions. 

In this paper, we have proposed a novel machine learning algorithm for approximating the solutions of the afore-mentioned dispersive PDEs. Our algorithm is based on recently proposed physics informed neural networks (PINNs), in which the PDE residual, together with initial and boundary data mismatches, is minimized by a gradient descent algorithm to yield a neural network that can approximate classical solutions of the underlying PDE. We prove rigorous bounds on the error of the PINNs and present several numerical experiments to demonstrate that PINNs can efficiently approximate the solutions of non-linear dispersive equations such as KdV-Kawahara, Camassa-Holm and Benjamin-Ono. We observe from the numerical experiments that PINNs can yield very low errors with low to moderate computational cost, even for complicated problems such as multi-soliton interactions, making them significantly more efficient than traditional numerical methods for these nonlinear PDEs. Moreover, we also showed that PINNs can efficiently approximate high-dimensional parametric dispersive PDEs, which arises in the context of UQ. Finally, PINNs are very easy to code and parallelize using standard machine learning frameworks such as PyTorch and Tensorflow. 

This impressive performance of PINNs is in spite of the fact that the basis of the PINNs algorithm is an \emph{automatic differentiation by backpropagation} routine, by which one evaluates the derivatives used in computing the PDE residual. Given that one has to repeatedly use automatic differentiation for evaluating the high-order derivatives for dispersive PDEs, for instance 3rd-order derivatives for the KdV and Camassa-Holm equation and even a 5th-order derivative for the Kawahara equation, it is surprising that the automatic differentiation routine is both stable and very accurate, resulting in very low PINN errors. This paper further demonstrates the robustness of backpropagation. 

 It is clear from the error estimates that PINNs can only approximate classical solutions of dispersive PDEs efficiently. On the other hand, singular solutions such as peakons for the Camassa-Holm equation cannot be efficiently approximated by PINNs. Rather, weak formulations of PINNs will be better suited for this purpose and we plan to investigate such an extension in the future.

\bibliographystyle{abbrv}
\bibliography{Dispersive_PINN}

\end{document}